\renewcommand{\theenumi}{{\upshape{(\roman{enumi})}}}
\theoremstyle{ddef}
\newtheorem*{definition*}{Definition}
\newtheoremstyle{dtheoremnopar}{3 mm}{1 mm}{\itshape}{}{\bfseries}{.}{ }
{\thmname{#1}\thmnumber{ #2}\thmnote{ \mdseries(#3)\bfseries}}
\theoremstyle{dtheoremnopar}
\newcounter{theoremx}
\newtheorem{theoremalpha}[theoremx]{Theorem}
\newtheorem{corollaryalpha}[theoremx]{Corollary}
\newcommand{\spref}[1]{\href{http://stacks.math.columbia.edu/tag/#1}{#1}}
\newcommand{\spcite}[1]{\cite[\spref{#1}]{stacks-project}}
\newcommand\ZZ{\mathbb{Z}}
\newcommand\FF{\mathbb{F}}
\newcommand\inj{\hookrightarrow}
\newcommand\surj{\twoheadrightarrow}
\newcommand\map[3]{#1\colon #2\rightarrow #3}
\DeclareMathOperator{\Aut}{Aut}
\DeclareMathOperator{\Sym}{Sym}
\DeclareMathOperator{\coker}{coker}
\newcommand\sF{\mathcal{F}}
\newcommand\sG{\mathcal{G}}
\newcommand\sI{\mathcal{I}}
\newcommand\sJ{\mathcal{J}}
\newcommand\sK{\mathcal{K}}
\newcommand\sN{\mathcal{N}}
\newcommand\sO{\mathcal{O}}
\newcommand\sP{\mathcal{P}}
\renewcommand\AA{\mathbb{A}}    
\DeclareMathOperator{\Spec}{Spec}
\DeclareMathOperator{\Supp}{Supp}  
\newcommand{\Gm}{\mathbb{G}_m}
\newcommand{\Gmu}{\pmb{\mu}} 
\newcommand{\gitq}{/\!\!/} 
\newcommand\QCoh{\mathbf{QCoh}} 
\newcommand{\stX}{\mathscr{X}}
\newcommand{\stY}{\mathscr{Y}}
\newcommand{\stZ}{\mathscr{Z}}
\newcommand{\stW}{\mathscr{W}}
\newcommand{\stG}{\mathscr{G}} 
\newcommand{\stC}{\mathscr{C}}
\newcommand{\stM}{\mathscr{M}}
\newcommand{\stE}{\mathscr{E}}
\newcommand{\coho}{\mathcal{H}}
\newcommand{\LDERF}{\mathbf{L}}
\newcommand{\RDERF}{\mathbf{R}}
\newcommand{\Lotimes}{\overset{\mathbf{L}}{\otimes}}
\newcommand{\LL}{\mathbb{L}} 
\newcommand{\Dqc}{\mathbf{D}_{\mathrm{qc}}} 
\newcommand{\Dbcoh}{\mathbf{D}^-_{\mathrm{Coh}}} 
\newcommand{\itemref}[1]{\ref{#1}}
\DeclareMathOperator{\stab}{stab}
\begin{document}

\title[Luna's fundamental lemma for stacks]{A generalization of Luna's fundamental lemma for stacks with good moduli spaces}
\author{David Rydh}
\address{KTH Royal Institute of Technology\\Department of 
  Mathematics\\SE-100 44 Stockholm\\Sweden}
\email{dary@math.kth.se}
\date{2020-08-25}
\thanks{Supported by the Swedish Research Council grant no 2011-5599 and 2015-05554.}
\subjclass[2010]{Primary 14D23; Secondary 14L24}
\keywords{Good moduli spaces, descent, strong, stabilizer preserving}

\begin{abstract}
We generalize Luna's fundamental lemma to smooth morphisms between stacks with
good moduli spaces. We also give a precise condition for when it holds for
non-smooth morphisms and versions for coherent sheaves and complexes.
This generalizes earlier results by Alper,
Abramovich--Temkin, Edidin and Nevins.
\end{abstract}

\maketitle


\setcounter{secnumdepth}{0}
\begin{section}{Introduction}
Let $G$ be a linearly reductive group acting on affine varieties $X=\Spec B$
and $Y=\Spec A$. Then $X$ and $Y$ admit \emph{good quotients} $\pi_X\colon X\to
X\gitq G=\Spec B^G$ and $\pi_Y\colon Y\to Y\gitq G=\Spec A^G$. These are not
orbit spaces in general but the closed points of the quotients correspond to
closed orbits.  Let $f\colon X\to Y$ be a $G$-equivariant morphism and let
$f\gitq G\colon X\gitq G\to Y\gitq G$ be the induced morphism.  When $f$ is
\'etale, Luna's fundamental lemma~\cite[p.~94]{luna} gives a criterion for $f$
to be \emph{strongly \'etale}, that is, $f\gitq G$ is \'etale and $f$ is the
pull-back of $f\gitq G$ along $\pi_Y$. An analogous criterion when $f$ is not
\'etale was recently given by Abramovich and
Temkin~\cite{abramovich-temkin_luna-fundamental} when $G$ is diagonalizable.

Stacks with \emph{good moduli spaces} generalize good quotients: a good
quotient $X\to X\gitq G$ gives rise to a good moduli space $[X/G]\to X\gitq G$.
Luna's fundamental lemma for stacks with good moduli spaces has been
generalized for \'etale morphisms \cite[Thm.~6.10]{alper_loc-quot-struct} (also
see \cite[Thm.~3.14]{alper-hall-rydh_etale-local-stacks}) and for closed
regular immersions \cite{edidin_strong-regular-embeddings}.

The purpose of this article is to give a general formulation of Luna's
fundamental lemma for stacks with good moduli spaces that simultaneously
generalize all the results mentioned above.

\subsection*{Good moduli spaces}
We briefly recall some properties of good moduli spaces (see
Section~\ref{S:gms}). Let $\stX$ be an algebraic stack that admits a good moduli
space $\pi_X\colon \stX\to X$. In particular, $X$ is an algebraic space and
$\pi_X$ is initial among maps to algebraic spaces. The map $\pi_X$ is universally
closed and for every
point $x\in |X|$, there is a unique closed point $x_0\in \pi_X^{-1}(x)$. We say
that such a point is \emph{special}.

Let $f\colon \stX\to \stY$ be a morphism between algebraic stacks
with good moduli spaces $X$ and $Y$. This induces a commutative diagram
\begin{equation*}
\vcenter{%
\xymatrix{%
\stX\ar[r]^f\ar[d] & \stY\ar[d] \\
X\ar[r]^g & Y.}
}%
\end{equation*}

We say that $f$ is \emph{strong} if the diagram above is cartesian.
We say that $f$ is \emph{special}, if $f$ takes special points to special points.

\begin{definition*}
Let $f\colon \stX\to \stY$ be a morphism of algebraic stacks and consider the
induced morphism $\varphi\colon I_{\stX}\to f^*I_{\stY}$ of inertia stacks. We
say that $f$ is
\begin{enumerate}
\item \emph{stabilizer preserving} if $\varphi$ is an isomorphism;
\item \emph{fiberwise stabilizer preserving at $y\in |\stY|$} if $\varphi|_{f^{-1}(\stG_y)}$ is an isomorphism; and
\item \emph{pointwise stabilizer preserving at $x\in |\stX|$} if $\varphi|_{\stG_x}$ is an isomorphism.
\end{enumerate}
\end{definition*}

A strong morphism is stabilizer preserving and special.
We can now state the main theorem.

\begin{theoremalpha}\label{T:MAIN-THEOREM}
Let $f\colon \stX\to \stY$ be a morphism between algebraic stacks that
admit good moduli spaces $\pi_X\colon \stX\to X$ and $\pi_Y\colon \stY\to Y$.
Let $g\colon X\to Y$ denote the induced morphism. Assume that $\pi_X$
and $\pi_Y$ have affine diagonals. Further assume either that $f$ is locally of
finite presentation or that $\stX$ and $\stY$ are locally noetherian.
Then $f$ is strong if and only if
\begin{enumerate}
\item\label{TI:special}
  $f$ is special,
\item\label{TI:fw-stab-pres}
  $f$ is fiberwise stabilizer preserving at every special point of $\stY$, and
\item\label{TI:1-L-strong}
  for every special point $x_0\in |\stX|$, the (ind-)vector bundle
  $\coho^{-1}(\LDERF i_{x_0}^*\LL_f)$ is trivial where $i_{x_0}\colon \stG_{x_0}\inj \stX$ denotes
  the inclusion of the residual gerbe.
\end{enumerate}
If in addition $f$ has one of the properties:
\begin{enumerate}\renewcommand{\theenumi}{{\upshape{(\alph{enumi})}}}
\item regular, smooth, \'etale, open immersion,\label{TI:first-reduced}\label{TI:regular}
\item unramified, closed immersion, locally closed immersion,
  quasi-regular immersion, Koszul-regular immersion,
  monomorphism,\label{TI:last-reduced}
\item flat, syntomic, local complete intersection, quasi-finite, finite,
  locally of finite presentation;
\end{enumerate}
then so has $g$. Moreover
\begin{enumerate}\renewcommand{\theenumi}{{\upshape{(\arabic{enumi})}}}
\item If $\pi_Y$ is a coarse moduli space, then condition \ref{TI:special}
  is redundant.
\item If $f$ has reduced fibers, e.g., if $f$ has one of the
properties in~\ref{TI:first-reduced}--\ref{TI:last-reduced}, then we can replace
condition~\ref{TI:fw-stab-pres} with the condition that $f$ is
\emph{pointwise} stabilizer
preserving at every point of $\stX$ above a special point of $\stY$.
\item If $f$ is flat, then condition~\ref{TI:1-L-strong} is redundant.
\end{enumerate}
\end{theoremalpha}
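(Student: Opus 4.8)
The plan is to prove the two implications separately, reducing both to a statement about linearly reductive group actions on affine schemes. After a standard reduction to the noetherian case and an \'etale localization on $Y$, the \'etale-local structure theorem identifies $\stY$ with $[\Spec A/G]$, $G=\stab_{y_0}$ linearly reductive, $Y$ with $\Spec A^{G}$, and the chosen special point $y_0$ with the $G$-fixed point of $\Spec A$ cut out by a $G$-stable maximal ideal. If $f$ is stabilizer preserving at a special point $x_0$ with $f(x_0)=y_0$ --- which holds when $f$ is strong, and, by conditions~\itemref{TI:special} and~\itemref{TI:fw-stab-pres}, also in the ``if'' direction --- then, after a further \'etale localization on $X$ around $\pi_X(x_0)$, the structure theorem for $\stX$ may be chosen compatibly, so that $\stX$ becomes $[\Spec B/G]$, $X$ becomes $\Spec B^{G}$, and $f$ is induced by a $G$-equivariant morphism $\Spec B\to\Spec A$ taking $x_0$ to $y_0$. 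In this dictionary $f$ is strong exactly when the canonical $G$-equivariant map $c\colon B^{G}\otimes_{A^{G}}A\to B$ is an isomorphism, and condition~\itemref{TI:1-L-strong} reads as triviality of the $G$-representation $\coho^{-1}(\LL_{B/A}\Lotimes_{B}\kappa(x_0))$.

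\emph{The ``only if'' direction, and the transfer of properties to $g$.} If $f$ is strong, then $\stX\xrightarrow{\sim}X\times_Y\stY$. Since good moduli spaces are stable under arbitrary base change on the moduli space, $\pi_X$ is the base change of $\pi_Y$ along $g$; hence each fibre $\pi_X^{-1}(x)$ is the base change of $\pi_Y^{-1}(g(x))$ along $\Spec\kappa(x)\to\Spec\kappa(g(x))$, its unique closed point maps to the special point of the latter, and $f$ is special; and since $X$ is an algebraic space, $I_{\stX}=I_{\stX/X}=X\times_Y I_{\stY/Y}=f^{*}I_{\stY}$, so $f$ is stabilizer preserving. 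For~\itemref{TI:1-L-strong}, pass to the local model: there $B$ is the quotient of the $G$-algebra $B^{G}\otimes_k A$ by the ideal $I$ generated by the $G$-\emph{invariant} elements $a\otimes 1-1\otimes a$ $(a\in A^{G})$, and the transitivity triangle for $A\to B^{G}\otimes_k A\to B$, after applying $-\Lotimes_{B}\kappa(x_0)$, exhibits $\coho^{-1}(\LL_{B/A}\Lotimes_{B}\kappa(x_0))$ as an extension of trivial $G$-representations --- one of its two outer terms is base-changed from the $G$-trivial $\LL_{B^{G}/k}$, the other is the fibre at $x_0$ of $I/I^{2}$, which is $G$-trivial because $I$ is generated by invariants --- hence trivial by linear reductivity. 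Finally, that $g$ inherits each of the listed properties of a strong $f$ is a descent statement along $\pi_Y$; it follows from the behaviour of good moduli spaces under morphisms recalled in Section~\ref{S:gms}, the affine-diagonal hypothesis and the Reynolds operator being what make these properties descend although $\pi_Y$ need not be flat.

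\emph{The ``if'' direction, and the main obstacle.} Assume~\itemref{TI:special}--\itemref{TI:1-L-strong}. Applying the local model at each special point of $\stX$ in turn and using that every orbit of $\Spec B$ specializes to a closed one, so that the resulting $G$-stable neighbourhoods cover, it suffices to prove that $c$ is an isomorphism near the $G$-fixed point $x_0$. Since $G$ is linearly reductive, $c$ induces an isomorphism on $G$-invariants, so $\ker(c)^{G}=\coker(c)^{G}=0$, and by an equivariant Nakayama argument it is enough to work formally locally at $x_0$ and $y_0$, i.e.\ to show that $\Spec B\to\Spec A$ is there the base change of $\Spec B^{G}\to\Spec A^{G}$. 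This is the technical heart of the paper, a common generalization of Luna's fundamental lemma ($f$ \'etale) and of the theorem of Abramovich--Temkin ($G$ diagonalizable) to arbitrary linearly reductive $G$ and non-smooth $f$. The two remaining hypotheses enter as follows: condition~\itemref{TI:fw-stab-pres} forces the fibre of $f$ over $y_0$ to carry the trivial $G$-action, so that all non-trivial isotypic content of $B$ near $x_0$ is imported from $A$, while condition~\itemref{TI:1-L-strong} controls the first-order relations; climbing the $\mathfrak{m}_{x_0}$-adic filtration and applying the Reynolds operator at each stage, one deduces that $c$ is an isomorphism order by order. I expect this step to be the principal difficulty: Abramovich--Temkin argue via explicit weight decompositions, which here must be replaced by systematic use of the Reynolds operator and the representation theory of linearly reductive groups, and isolating the precise form of condition~\itemref{TI:1-L-strong} needed to make the inductive step go through is the crux.

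\emph{The refinements.} For~\textup{(1)}: if $\pi_Y$ is a coarse moduli space then every point of $\stY$ is special, so condition~\itemref{TI:special} is automatic. For~\textup{(2)}: if the fibres of $f$ are reduced, then on a fibre $f^{-1}(\stG_y)$ the locus where $\varphi$ fails to be an isomorphism is detected on points, so condition~\itemref{TI:fw-stab-pres} becomes equivalent to pointwise stabilizer preservation at every point of $\stX$ lying over a special point of $\stY$; and any property from~\itemref{TI:first-reduced}--\itemref{TI:last-reduced} forces the fibres of $f$ to be reduced. For~\textup{(3)}: if $f$ is flat then $B$ is flat over $A$ in the local model, and a Luna slice argument in families --- available precisely because flatness removes the deformation obstructions that condition~\itemref{TI:1-L-strong} was introduced to control --- gives $B=B^{G}\otimes_{A^{G}}A$ directly from~\itemref{TI:special} and~\itemref{TI:fw-stab-pres}; hence condition~\itemref{TI:1-L-strong}, which is then automatically satisfied by the ``only if'' direction, may be dropped.
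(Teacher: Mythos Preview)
Your approach differs substantially from the paper's, and the central step of your ``if'' direction is not actually carried out. You reduce, via the \'etale-local structure theorem, to showing that the $G$-equivariant map $c\colon B^{G}\otimes_{A^{G}}A\to B$ is an isomorphism, and then describe a plan (climb the $\mathfrak{m}_{x_0}$-adic filtration, apply the Reynolds operator at each stage) while explicitly flagging this as ``the principal difficulty'' and ``the crux'' --- but you do not execute it. That is the gap: without this step there is no proof of the ``if'' direction. A secondary concern is that invoking the local structure theorem imposes hypotheses the theorem as stated does not carry; the paper is deliberate about avoiding any reduction to quotient stacks.

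The paper's route is both different and considerably simpler. Rather than linearizing to $[\Spec A/G]$, it shows directly (Proposition~\ref{P:key-prop}) that conditions~\ref{TI:special} and~\ref{TI:fw-stab-pres} force the comparison map $\rho\colon\stX\to\stY\times_{Y}X$ to be a \emph{closed immersion} in a neighbourhood of $x$, via Zariski's Main Theorem. This reduces the problem to the case of a closed immersion, where strongness is the statement that the counit $\pi_Y^{*}(\pi_{Y})_{*}\sI\to\sI$ is surjective on the ideal sheaf $\sI$ of $\rho$; by Theorem~\ref{T:strong-modules:gen}\ref{TI:strong:surj} this holds exactly when $i_{x_0}^{*}\sI$ is a trivial vector bundle, and for a closed immersion $i_{x_0}^{*}\sI=\coho^{-1}(\LDERF i_{x_0}^{*}\LL_{\rho})$, which is precisely what condition~\ref{TI:1-L-strong} controls (after comparing $\LL_{\rho}$ and $\LL_{f}$ via the transitivity triangle). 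Thus the cotangent condition is used only once, on an ideal sheaf, via a Nakayama-type lemma for good moduli spaces --- no filtration argument, no explicit representation theory, and no local structure theorem. For the ``only if'' direction the paper again avoids any local model: the triviality of $\coho^{-1}(\LDERF i_{x_0}^{*}\LL_{f})$ follows because the cone of $\LDERF\pi_X^{*}\LL_{g}\to\LL_{f}$ lives in degrees $\le -2$ when $f$ is strong (Lemma~\ref{L:L-strong}\ref{LI:L-strong:strong}). The descent of the listed properties is handled by purity of $\pi_Y$ for the flat and fibral ones (Proposition~\ref{P:main-thm:properties}), with separate short arguments for (Koszul-/quasi-)regular immersions and lci; your one-sentence appeal to the Reynolds operator does not cover these. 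Likewise, refinement~(3) in the paper is not a Luna slice argument but the observation that for flat $f$ the cotangent complex commutes with base change to the residual gerbe, so $\LDERF i_{x_0}^{*}\LL_{f}$ is pulled back from $\Spec\kappa(x)$ and hence trivial in every degree (Lemma~\ref{L:L-strong}\ref{LI:L-strong:flat}).
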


This generalizes the main theorem of \cite{abramovich-temkin_luna-fundamental}
in several different directions. Most importantly, we do not require that
$\stX$ and $\stY$ are quotient stacks, nor that the stabilizer groups are
diagonalizable. This answers
\cite[\S1.5]{abramovich-temkin_luna-fundamental}. Furthermore, we prove that
flatness descends and that the condition on cotangent complexes is redundant
for flat morphisms. We also give pointwise variants of conditions
\ref{TI:special}--\ref{TI:1-L-strong}
and show that if these hold at a special point $x_0\in \pi_X^{-1}(x)$, then $f$
is strong in a neighborhood of $x\in |X|$ (Theorem~\ref{T:main-thm:local}).
In particular, if $X$ is quasi-compact and quasi-separated, then it is enough
to verify \ref{TI:1-L-strong} for closed points $x_0$.
Finally, we
also allow $\stX$ and $\stY$ to be non-noetherian if we instead assume that $f$
is locally of finite presentation.

Recall that syntomic means flat with fibers that are lci
(local complete intersections).
We do not assume that lci maps and syntomic maps between noetherian algebraic
stacks
are of finite type, see Section~\ref{S:lci}. In the noetherian setting,
the notions of regular, Koszul-regular and quasi-regular coincide
(see Section~\ref{S:reg-emb}). A Koszul-regular immersion is
an lci immersion. 

We also have a similar result for coherent sheaves. This is well-known for
vector bundles, see~\cite[Thm.~10.3]{alper_good-mod-spaces}
and~\cite[\S2]{drezet-narasimhan}. For coherent sheaves this was proved in
the GIT setting by Nevins~\cite[Thm.~1.2]{nevins_descent}.

\begin{theoremalpha}\label{T:STRONG-MODULES}
Let $\pi\colon \stX\to X$ be a good moduli space. Let $\sF$ be a
quasi-coherent $\sO_{\stX}$-module of finite presentation. The following
are equivalent
\begin{enumerate}
\item\label{CI:STRONG-MODULES:pullback}
  $\sF=\pi^*\sG$ for some $\sO_X$-module $\sG$ of finite presentation;
\item\label{CI:STRONG-MODULES:counit-bijective}
  the counit map $\epsilon\colon \pi^*\pi_*\sF\to \sF$ is bijective; and
\item\label{CI:STRONG-MODULES:trivial-actions}
  for every special point $x_0\in |\stX|$, the vector bundles
  $i_{x_0}^*\sF$ and $\coho^{-1}(\LDERF i_{x_0}^*\sF)$ are trivial where
  $i_{x_0}\colon \stG_{x_0}\inj \stX$ denotes the inclusion of the residual gerbe.
\end{enumerate}
Under these equivalent conditions, $\sG=\pi_*\sF$ is of finite presentation.
Moreover, if $\sF$ is flat, i.e., locally free, then it is enough that $i_{x_0}^*\sF$
is trivial for every special point $x_0$ and then $\pi_*\sF$ is locally
free as well.
\end{theoremalpha}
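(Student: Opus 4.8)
The plan is to prove the cycle \ref{CI:STRONG-MODULES:pullback} $\Rightarrow$ \ref{CI:STRONG-MODULES:counit-bijective} $\Rightarrow$ \ref{CI:STRONG-MODULES:trivial-actions} $\Rightarrow$ \ref{CI:STRONG-MODULES:pullback}, with the final implication also yielding the finite presentation of $\pi_*\sF$. The implication \ref{CI:STRONG-MODULES:pullback} $\Rightarrow$ \ref{CI:STRONG-MODULES:counit-bijective} is formal: for a good moduli space the unit $\sG\to\pi_*\pi^*\sG$ is an isomorphism for every quasi-coherent $\sO_X$-module $\sG$ (by the projection formula and $\pi_*\sO_{\stX}=\sO_X$), so if $\sF=\pi^*\sG$ with $\sG$ finitely presented then $\pi_*\sF\cong\sG$ and the counit is the identity of $\pi^*\sG$.

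For the other two implications I would first pass to an equivariant local model. Since $\pi$ is a good moduli space, the stabilizer at a special point $x_0\in\pi^{-1}(x)$ is linearly reductive, so by the étale slice theorem (\cite[Thm.~6.10]{alper_loc-quot-struct}, \cite[Thm.~3.14]{alper-hall-rydh_etale-local-stacks}) there is, for each $x$, an étale neighbourhood $[\Spec C/G]\to\stX$ of $x_0$ with $G=\stab(x_0)$ linearly reductive, inducing an étale map $\Spec C^G\to X$ with $x$ in its image; such charts cover $X$. Writing $A=C^G$, here $\sF$ corresponds to a finitely presented $(C,G)$-module $M$, $\pi_*\sF$ to $M^G$, a pullback $\pi^*\sG$ to $N\otimes_A C$ for a finitely presented $A$-module $N$ with $G$ acting through $C$, the restriction $i_{x_0}^*\sF$ to the $G$-module $M\otimes_C\kappa(p)$, and $\coho^{-1}(\LDERF i_{x_0}^*\sF)$ to $\operatorname{Tor}^C_1(M,\kappa(p))$, where $p\in\Spec C$ is the $G$-fixed point over the image $\mathfrak m$ of $x_0$ and $\kappa(p)=C/\mathfrak m_p$. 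Since all three conditions and the finite-presentation statement are étale-local on $X$, it suffices to argue in this model; there $(-)^G$ is exact on $(C,G)$-modules, a finitely generated $G$-module is a trivial vector bundle on $\stG_{x_0}$ exactly when it is pulled back from $\Spec\kappa(x_0)$, equivalently when it is induced from its $G$-invariants, and every subquotient of a trivial $G$-module is trivial. A non-noetherian $\stX$ is reduced to this case by a standard approximation argument, shrinking $C$ to a finitely generated algebra over a noetherian ring.

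For \ref{CI:STRONG-MODULES:counit-bijective} $\Rightarrow$ \ref{CI:STRONG-MODULES:trivial-actions} one has $M\cong M^G\otimes_A C$, so $M\otimes_C\kappa(p)\cong M^G\otimes_A\kappa(p)$ is a sum of copies of the trivial module $\kappa(p)=i_{x_0}^*\sO_{\stX}$, hence trivial; and since $\mathfrak m_p\otimes_C M\cong M^G\otimes_A\mathfrak m_p$, the module $\operatorname{Tor}^C_1(M,\kappa(p))=\ker(\mathfrak m_p\otimes_C M\to M)$ is a quotient of $\operatorname{Tor}^A_1(M^G,\kappa(p))$, which—computed from an $A$-projective resolution of $M^G$ equipped with the trivial $G$-action—is again a sum of copies of $\kappa(p)$, hence trivial, so its quotient is trivial too. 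The heart of the proof is \ref{CI:STRONG-MODULES:trivial-actions} $\Rightarrow$ \ref{CI:STRONG-MODULES:pullback}. Assuming \ref{CI:STRONG-MODULES:trivial-actions}, exactness of $(-)^G$ gives that $M^G\to(M/\mathfrak m_pM)^G$ is surjective, and lifting a $\kappa(x_0)$-basis of $(M/\mathfrak m_pM)^G$ to $G$-invariant elements of $M$ produces a $G$-equivariant map $F_0:=C^{n_0}\to M$ with $F_0$ carrying the trivial action and, because $M/\mathfrak m_pM$ is a trivial bundle, with $F_0\otimes_C\kappa(p)\xrightarrow{\sim}M\otimes_C\kappa(p)$. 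Its cokernel $Q$ is a finitely generated $C$-module with $Q\otimes_C\kappa(p)=0$, so $p\notin\Supp Q$; as $\Supp Q$ is closed and $G$-stable while every point of the fibre $\pi^{-1}(\mathfrak m)$ specializes to the unique closed orbit $\{p\}$, we get $\Supp Q\cap\pi^{-1}(\mathfrak m)=\varnothing$, and after localizing $A$ around $\mathfrak m$ we may assume $F_0\twoheadrightarrow M$. Setting $M_1=\ker(F_0\to M)$, a finitely generated $C$-module, and comparing Tor sequences gives $M_1\otimes_C\kappa(p)\cong\operatorname{Tor}^C_1(M,\kappa(p))$, which is trivial by \ref{CI:STRONG-MODULES:trivial-actions}, so the same construction yields a trivial free $(C,G)$-module $F_1=C^{n_1}$ with $F_1\twoheadrightarrow M_1$ and hence an exact sequence $F_1\to F_0\to M\to 0$ of $(C,G)$-modules in which $F_0$ and $F_1$ carry trivial actions. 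Applying the exact functor $(-)^G$ gives an exact sequence $A^{n_1}\to A^{n_0}\to M^G\to 0$, so $M^G$, i.e.\ $\pi_*\sF$, is finitely presented; applying $-\otimes_A C$ and using that $F_i^G\otimes_A C\to F_i$ is an isomorphism for the trivial free modules $F_i$, the resulting ladder with exact rows identifies the counit $M^G\otimes_A C\to M$ as an isomorphism. Thus $\sF\cong\pi^*\pi_*\sF$ with $\pi_*\sF$ finitely presented, and these local statements glue over the étale cover.

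When $\sF$ is flat we have $\LDERF i_{x_0}^*\sF=i_{x_0}^*\sF$, so $\coho^{-1}(\LDERF i_{x_0}^*\sF)=0$ is automatically trivial and only triviality of $i_{x_0}^*\sF$ is needed; in the model $M$ is then finitely presented and flat, hence projective over $C$, so $F_0\twoheadrightarrow M$ splits $G$-equivariantly (as $\operatorname{Hom}_C(M,-)^G$ is exact on $(C,G)$-modules) and the direct summand $M^G$ of the free $A$-module $F_0^G$ is projective, i.e.\ $\pi_*\sF$ is locally free. The step I expect to be the main obstacle is the geometric Nakayama passage from $Q\otimes_C\kappa(p)=0$ to $Q=0$ near $\mathfrak m$: this is precisely where the good moduli space hypotheses are indispensable—universal closedness of $\pi$, the unique closed point of each fibre, and the fact that every orbit specializes to it. The remaining delicate points are the precise dictionary identifying $i_{x_0}^*\sF$ and $\coho^{-1}(\LDERF i_{x_0}^*\sF)$ with the stated $G$-modules and `trivial vector bundle on $\stG_{x_0}$' with `induced from the $G$-invariants', together with the non-noetherian approximation.
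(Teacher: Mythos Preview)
Your core argument---lift a basis of $i_{x_0}^*\sF$ to global sections, use Nakayama to get a surjection $\sO_{\stX}^{\oplus n_0}\twoheadrightarrow\sF$, identify the restriction of the kernel with $\coho^{-1}(\LDERF i_{x_0}^*\sF)$, and repeat---is exactly the paper's proof (Lemma~\ref{L:surjective} and Theorem~\ref{T:strong-modules:gen}). The difference is that you first pass to an \'etale-local quotient presentation $[\Spec C/G]$ and run the argument with $(C,G)$-modules, whereas the paper works directly on $\stX$: the lifting step uses only that $\Gamma(\stX,-)$ is exact (cohomological affineness), and the Nakayama step only that $\pi$ is closed. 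No local structure theorem is needed.

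This detour is not just aesthetic. The results you cite, \cite[Thm.~6.10]{alper_loc-quot-struct} and \cite[Thm.~3.14]{alper-hall-rydh_etale-local-stacks}, are Luna's fundamental lemma for \'etale morphisms, not the local structure theorem; they do not produce charts $[\Spec C/G]\to\stX$. The actual local structure theorem (Alper--Hall--Rydh, \emph{A Luna \'etale slice theorem for algebraic stacks}) does, but it requires extra hypotheses---affine stabilizers and a finite-type condition over the good moduli space---that Theorem~\ref{T:STRONG-MODULES} does not assume. Your ``standard approximation'' remark does not close this gap. So your proof, as written, establishes the theorem only under those additional hypotheses; to get the full statement, drop the reduction to $[\Spec C/G]$ and observe that every step you carry out with $(-)^G$ and $G$-modules is already valid with $\pi_*$ and quasi-coherent sheaves on $\stX$.
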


See Theorem~\ref{T:strong-modules:gen} for a more general version
of Theorem~\ref{T:STRONG-MODULES}. Also see
Theorem~\ref{T:strong-modules:complexes} for a version for complexes
generalizing~\cite[Thm.~1.3]{nevins_descent}.

\subsection{Applications to tame stacks}
Recall that a tame stacks is a stack with finite inertia and finite linearly
reductive stabilizers~\cite{abramovich_olsson_vistoli_tame_stacks}.
The coarse moduli space of a tame stack is also a good
moduli space.

\begin{corollaryalpha}\label{C:MAPS-TO-STACKS}
Let $\stX$ be a tame stack with coarse moduli space $\pi\colon \stX\to X$.
Let $\stY$ be any
algebraic stack and let $f\colon \stX\to \stY$ be a morphism. The
following are equivalent
\begin{enumerate}
\item $f$ factors through $\pi$,
\item $f$ factors through the fppf sheafification $\stX\to \pi_0\stX$,
\item the induced morphism $I_f\colon I_{\stX}\to I_{\stY}$ of inertia stacks is
  trivial, i.e., 
  factors through the identity section $\stY\to I_{\stY}$.
\item\label{CI:MAPS:trivial-actions}
  for every field $k$, every morphism $x\colon \Spec k\to \stX$ and every
  automorphism $\varphi\in\Aut(x)$, the automorphism $f(\varphi)\in\Aut(f\circ
  x)$ is the identity.
\end{enumerate}
Under these equivalent conditions, the factorization through $\pi$ is unique.
\end{corollaryalpha}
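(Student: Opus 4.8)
The plan is to organize the proof around the cycle of implications
$(i)\Rightarrow(ii)\Rightarrow(iii)\Rightarrow(iv)\Rightarrow(i)$,
since $(i)\Leftrightarrow(ii)$ is essentially the universal property of the
good moduli space $\pi$ for tame stacks (for a tame stack $\pi_0\stX=X$, because
the coarse space is the fppf sheafification of the inertia-trivialized stack).
The implication $(ii)\Rightarrow(iii)$ is formal: if $f$ factors as
$\stX\to\pi_0\stX\to\stY$, then since $I_{\pi_0\stX}$ is trivial over $\pi_0\stX$
(the sheafification kills automorphisms), $I_f$ is pulled back from the trivial
inertia and hence is trivial. The implication $(iii)\Rightarrow(iv)$ is immediate
from the definition of $I_f$: evaluating the morphism
$I_f\colon I_{\stX}\to I_{\stY}$ at a point $(x,\varphi)$ with
$x\colon\Spec k\to\stX$ and $\varphi\in\Aut(x)$ gives precisely
$(f\circ x, f(\varphi))$, and triviality of $I_f$ means $f(\varphi)$ is the
identity of $f\circ x$.

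The substantive step is $(iv)\Rightarrow(i)$, i.e., producing the factorization
from the pointwise triviality of the action on automorphisms. The approach is to
reduce to Theorem~\ref{T:MAIN-THEOREM} (or rather its hypotheses) applied to a
\emph{modified} morphism. First I would form the fiber product
$\stX\times_{\stY}\stY$ in a way that lets us work with the graph: consider the
morphism $h=(f,\pi)\colon\stX\to\stY\times X$ and the projection
$q\colon\stY\times X\to X$; it suffices to show $q\circ h$ admits the obvious
section structure making $f$ factor through $X$, which amounts to showing that
$\stX\to\stX\times_{X}X$ — or more precisely the relevant comparison — is an
isomorphism. Concretely, the cleanest route is: since $\pi$ is a good moduli
space and hence universal for maps to algebraic spaces, and since we want a map
$X\to\stY$, we should instead exhibit $f$ as \emph{strong} over a suitable
target. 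The key observation is that condition~$(iv)$ says $f$ is pointwise
stabilizer preserving at \emph{every} point of $\stX$ in the strong sense that
the relative inertia $I_{\stX}\to f^*I_{\stY}$ is not merely an isomorphism but
the image of $I_{\stX}$ is the identity section; this is stronger than what
Theorem~\ref{T:MAIN-THEOREM} requires and forces the fibers of
$f$ over residual gerbes to be gerbes banded trivially, so that $f$ factors
through $\pi_0\stX$ fiberwise.

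For the globalization I would argue as follows. Tameness gives, étale-locally on
$X$, a presentation $\stX\cong[\Spec B/G]$ with $G$ linearly reductive and
finite; condition~$(iv)$ then says the composite
$G\times\Spec B\to\stX\to\stY$ has the property that the $G$-action is carried
to the trivial automorphism on every geometric point, hence (since a morphism of
group algebraic spaces over $\Spec B$ that is trivial on all geometric fibers and
whose source is flat and finitely presented is trivial) the map $G\to\Aut(f)$
over $\Spec B$ is trivial; therefore $f$ restricted to $[\Spec B/G]$ is
$G$-invariant and factors through $\Spec B^G$. One then glues these local
factorizations using that $\pi$ is a universal map to algebraic spaces, so the
factorization is unique where it exists and the local pieces are forced to agree
on overlaps; uniqueness of the factorization through $\pi$ is exactly the
initiality of $\pi$ among maps to algebraic spaces together with the fact that
$\stY$ need not be an algebraic space — here we use that once $f$ factors through
\emph{some} algebraic space it factors through the initial one, namely $X$. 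The
main obstacle I expect is precisely this last gluing/descent step: making sure
the étale-local factorizations through $B^G$ are compatible and descend to a
global map $X\to\stY$, which requires knowing that the target-side data (the
morphisms to $\stY$) satisfy étale descent — true since $\stY$ is a stack — and
that the invariant-ring construction is compatible with the étale-localizations
on $X$, which is the content of good moduli spaces being étale-local on the base
(Alper's theory). Once that is in place, uniqueness of the factorization is a
one-line consequence of $\pi$ being an epimorphism of fppf (indeed étale) sheaves
onto $X$.
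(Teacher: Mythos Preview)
Your treatment of $(i)\Rightarrow(ii)\Rightarrow(iii)\Rightarrow(iv)$ is fine and matches the paper. Uniqueness, however, is not just ``$\pi$ is an fppf epimorphism'': since $\stY$ is a genuine stack you must argue $2$-categorically. The paper does this by observing that two candidate factorizations $g_1,g_2\colon X\to\stY$ yield a map $\stX\to\stY\times_{\stY\times\stY}X$ to an \emph{algebraic space}, which then factors uniquely through $X$ by initiality of $\pi$.

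The substantive gap is in your $(iv)\Rightarrow(i)$. In the local chart $\stX\simeq[\Spec B/G]$ you invoke a ``map $G\to\Aut(f)$ over $\Spec B$'' and claim (iv) makes it trivial. No such map exists. Writing $h\colon\Spec B\to\stY$ for the composite, the descent datum furnishes $2$-isomorphisms $h\Rightarrow h\circ g$ for $g\in G$; these are automorphisms of $h$ only over the \emph{stabilizer} subgroup scheme in $G\times\Spec B$, and it is exactly this restricted map that (iv) trivializes. Knowing that stabilizers act trivially does not force $h$ to descend to $\Spec B^G$: the map $\Spec B\to\Spec B^G$ is not fppf in general, so there is no descent theorem to invoke, and the stabilizer scheme need not be flat over $\Spec B$, so your ``trivial on geometric fibers implies trivial'' step is not available either.

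The paper's route is the one you gestured at and then dropped: apply Theorem~\ref{T:MAIN-THEOREM} to a modified morphism. Choose a smooth presentation $U\to\stY$ and set $\stX'=\stX\times_{\stY}U$. The projection $\stX'\to\stX$ is smooth and surjective, and condition~(iv) makes it pointwise stabilizer preserving at every point: an automorphism $\varphi\in\Aut(x)$ lifts to $(x,u)$ precisely because $f(\varphi)=\mathrm{id}$ fixes $u$. Since the morphism is smooth (flat with reduced fibers) and $\pi$ is a coarse moduli space, Theorem~\ref{T:MAIN-THEOREM} applies and $\stX'\to\stX$ is strong: it is pulled back from a smooth surjection $X'\to X$, and $\stX'\to X'$ is again a coarse moduli space. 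Now $\stX'\to U$ is a map to an algebraic space, hence factors through $X'$, giving $X'\to U\to\stY$; by the uniqueness already established this descends along the fppf cover $X'\to X$ to the desired $X\to\stY$. The key idea is to pull back a presentation of the \emph{target} $\stY$, not to localize the source.
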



Concretely, if for example $\stY=\stM_g$ is the stack of smooth genus $g$
curves, then a family of genus $g$ curves $\stC\to \stX$ comes from family of
genus $g$ curves $C\to X$ if, for every point $x\in \stX(k)$, the stabilizer group
of $x$ acts trivially on $\stC_x$. Taking $\stY=BGL_n$ we recover Theorem~\ref{T:STRONG-MODULES} for vector bundles on tame stacks.  Taking
$\stY=BG$ for a flat group scheme $G\to X$ locally of finite presentation,
we obtain:

\begin{corollaryalpha}\label{C:TORSORS}
Let $\stX$ be a tame stack with coarse moduli space $\pi\colon \stX\to X$.
Let $G\to X$ be a flat group scheme locally of finite presentation.
Then $\pi^*$ induces an equivalence of
categories between the category of $G$-torsors $E\to X$ and the category of
$G$-torsors $\stE\to \stX$ such that $\Aut(x)$ acts trivially on
$\stE_x$ for every point $x\in \stX(k)$.
\end{corollaryalpha}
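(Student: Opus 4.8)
The plan is to deduce Corollary~\ref{C:TORSORS} from Corollary~\ref{C:MAPS-TO-STACKS} applied to $\stY=BG$, which parametrizes $G$-torsors. The key point is that since $G\to X$ is a flat group scheme locally of finite presentation, $BG$ is an algebraic stack, and the groupoid $BG(\stX)$ of morphisms $\stX\to BG$ (over $X$) is canonically equivalent to the groupoid of $G$-torsors on $\stX$; similarly $BG(X)$ is the groupoid of $G$-torsors on $X$. Under this dictionary, the pullback functor $\pi^*$ on torsors corresponds to the restriction-along-$\pi$ functor $BG(X)\to BG(\stX)$, $h\mapsto h\circ\pi$.

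First I would make precise the correspondence between the fiberwise-triviality condition on automorphisms in Corollary~\ref{C:MAPS-TO-STACKS}\ref{CI:MAPS:trivial-actions} and the condition that $\Aut(x)$ acts trivially on $\stE_x$. Given a point $x\colon\Spec k\to\stX$ and $\varphi\in\Aut(x)$, the composite $f\circ x\colon\Spec k\to BG$ classifies the torsor $\stE_x=x^*\stE$ together with its $G$-action, and $f(\varphi)$ is precisely the automorphism of this torsor induced by $\varphi$ acting on $\stE_x$; so $f(\varphi)=\mathrm{id}$ for all such $\varphi$ if and only if $\Aut(x)$ acts trivially on $\stE_x$ for all $x$. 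Hence the essential image of $\pi^*$ on torsors is exactly the full subcategory of $G$-torsors $\stE\to\stX$ satisfying the stated condition, by the equivalence (i)$\Leftrightarrow$(iv) of Corollary~\ref{C:MAPS-TO-STACKS} (a factorization of $f$ through $\pi$ is, under the dictionary, an extension of $\stE$ to a torsor on $X$, i.e., an object of $BG(X)$ pulling back to $\stE$).

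For full faithfulness of $\pi^*$ I would argue as follows. Let $E_1,E_2\to X$ be $G$-torsors; an isomorphism $\pi^*E_1\xrightarrow{\sim}\pi^*E_2$ of $G$-torsors on $\stX$ is a section of $\underline{\mathrm{Isom}}_G(\pi^*E_1,\pi^*E_2)$, which is a torsor under the inner form ${}^{E_1}\!G$ (or equivalently, it is a point of the algebraic space $\mathrm{Isom}_G(E_1,E_2)\to X$ pulled back to $\stX$). Because $\pi$ is a good moduli space, $\pi^*$ is fully faithful on algebraic spaces affine over $X$ — more precisely, $\mathrm{Hom}_X(X,Z)=\mathrm{Hom}_X(\stX,Z\times_X\stX)$ for $Z\to X$ as $\pi$ is initial among maps to algebraic spaces and $\pi_*\sO_\stX=\sO_X$ — so sections of $\mathrm{Isom}_G(E_1,E_2)$ over $\stX$ correspond to sections over $X$. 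This gives that $\pi^*$ is fully faithful. Combined with the description of the essential image, $\pi^*$ is an equivalence onto the indicated subcategory. Uniqueness of the factorization in Corollary~\ref{C:MAPS-TO-STACKS} translates to the statement that an extension of $\stE$ to $X$, if it exists, is unique up to unique isomorphism, which is also contained in the full faithfulness.

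The main obstacle I expect is bookkeeping the $2$-categorical dictionary carefully: one must check that $BG(\stX)$ is genuinely the groupoid of $G$-torsors over $\stX$ (not just over $X$-schemes), that the automorphism $f(\varphi)$ matches the action of $\varphi$ on the fiber, and that ``$f$ factors through $\pi$'' corresponds exactly to ``$\stE$ descends to $X$'', including the compatibility of the relative-over-$X$ structure. There is also a mild subtlety that Corollary~\ref{C:MAPS-TO-STACKS} is stated for morphisms to an arbitrary algebraic stack $\stY$ with no finiteness hypothesis on $f$, so $BG$ with $G$ merely flat and locally of finite presentation is admissible without further ado; still, one should confirm $BG$ is algebraic under these hypotheses. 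Once the dictionary is set up, everything else is a direct application of Corollary~\ref{C:MAPS-TO-STACKS} together with the initiality of good moduli spaces.
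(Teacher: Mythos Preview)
Your proposal is correct and follows exactly the approach indicated in the paper: the paper deduces Corollary~\ref{C:TORSORS} by taking $\stY=BG$ in Corollary~\ref{C:MAPS-TO-STACKS}, without writing out a separate proof. You supply more detail than the paper does---in particular your explicit identification of condition~\ref{CI:MAPS:trivial-actions} with the trivial stabilizer action on fibers, and your argument for full faithfulness via initiality of $\pi$ among maps to algebraic spaces---but this is the same argument that underlies the uniqueness clause in Corollary~\ref{C:MAPS-TO-STACKS} and its proof.
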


The special cases $G=\Gm$, $G=S_n$ and $G$ tame are \cite[Props.~6.1, 6.2 and
  6.4]{olsson_G-torsors}. When $G$ is defined over a field, the result is
\cite[Prop.~3.15]{biswas-borne_tamely-ramified-torsors-and-parabolic-bundles}.
For $G$ \'etale, the result is also true for non-tame coarse
moduli spaces. This follows directly from
\cite[Prop.~6.7]{rydh_finite_quotients}.
If $X$ is quasi-compact and quasi-separated, then it is enough to verify
conditions Theorem~\ref{T:STRONG-MODULES}\ref{CI:STRONG-MODULES:trivial-actions}, Corollary~\ref{C:MAPS-TO-STACKS}\ref{CI:MAPS:trivial-actions} and Corollary~\ref{C:TORSORS} at closed points.

\subsection{Necessity of conditions}
When $f$ is \'etale, Luna's fundamental lemma is also true for coarse moduli
spaces \cite[Prop.~6.7]{rydh_finite_quotients} and more generally for adequate
moduli spaces \cite[Thm.~3.14]{alper-hall-rydh_etale-local-stacks}.
Adequate moduli spaces gives a full generalization of GIT in positive
characteristic, also allowing for reductive groups that are not linearly
reductive.

When $f$ is not \'etale, these generalizations fail: in
\cite[\S4.5]{romagny-rydh-zalamansky_complexity} there is an example of an
action of $G=\ZZ/p\ZZ$ on an affine scheme $\Spec A$, where
$A=\FF_p[\epsilon,x]/(\epsilon^2)$, such that
\begin{enumerate}
\item Theorem~\ref{T:MAIN-THEOREM} does not hold for $\stY=[\Spec A/G]$ and a
  certain affine smooth morphism $f$,
\item Theorem~\ref{T:STRONG-MODULES} does not hold for $\stX=[\Spec A/G]$ and a
  certain torsion line bundle $\sF$,
\item Corollary~\ref{C:MAPS-TO-STACKS} does not hold for $\stX=[\Spec A/G]$ and
  $\stY=B\Gmu_p$ or $\stY=B\Gm$.
\item Corollary~\ref{C:TORSORS} does not hold for $\stX=[\Spec A/G]$ and
  $G=\Gmu_p$.
\end{enumerate}

If $\pi_Y$ does not have separated diagonal, then Theorem~\ref{T:MAIN-THEOREM}
fails. Consider the non-separated group scheme $G\to \AA^1$ which as a scheme
is the affine line with a double origin. We have a surjective homomorphism
$(\ZZ/2\ZZ)_{\AA^1}\to G$. Let $\Gm$ act by multiplication on $\AA^1$ and
let $\ZZ/2\ZZ$ and $G$ act trivially on $\AA^1$. Then
$f\colon [\AA^1 / (\ZZ/2\ZZ \times \Gm)] \to [\AA^1 / (G \times \Gm)]$ satisfies
(i)--(iii) of Theorem~\ref{T:MAIN-THEOREM} but is not stabilizer preserving.
In \cite[Thm.~13.1]{alper-hall-rydh_etale-local-stacks}, it is shown under some
mild finiteness assumptions that if $\pi_Y$ has separated diagonal, then it has
affine diagonal.

\subsection*{Overview}
In Section~\ref{S:gms} we recall
the notion of a good moduli space and its basic properties, including a
variant of Nakayama's lemma (Lemma~\ref{L:Nakayama-gms}).
We also prove Theorem~\ref{T:STRONG-MODULES} and Theorem~\ref{T:MAIN-THEOREM}
when $f$ is a closed immersion.
In Section~\ref{S:special}, we prove
that conditions (i) and (ii) in Theorem~\ref{T:MAIN-THEOREM} imply that
$\rho\colon \stX\to
\stY\times_Y X$ is a closed immersion. This is the key to reduce
Theorem~\ref{T:MAIN-THEOREM} to the special case treated in
Section~\ref{S:gms}.

In Section~\ref{S:cotangent-complexes} we study condition (iii) and
in Section~\ref{S:pf-of-main-thm} we give the local version of
the main theorem (Theorem~\ref{T:main-thm:local}) and most of the properties.

In Sections~\ref{S:reg-emb}--\ref{S:lci} we briefly recall the notions of
regular immersions and local complete intersections with an emphasis on
stacks, maps between non-noetherian schemes/stacks, and maps between noetherian
schemes/stacks that are not of finite type. We then prove
that the properties quasi-regular, Koszul-regular and lci descend
for strong morphisms.

\subsection*{Acknowledgments}
This paper was greatly influenced by reading Dan Edidin's
paper~\cite{edidin_strong-regular-embeddings} on strong regular immersions and
the paper by Dan Abramovich and Michael
Temkin~\cite{abramovich-temkin_luna-fundamental} mentioned earlier. Likewise,
Corollary~\ref{C:MAPS-TO-STACKS} was inspired by Matthieu Romagny and
Gabriel Zalamansky~\cite{romagny-rydh-zalamansky_complexity} and
Corollary~\ref{C:TORSORS} by Indranil Biswas and Niels
Borne~\cite{biswas-borne_tamely-ramified-torsors-and-parabolic-bundles}. I
would also like to thank Jarod Alper, Daniel Bergh and Jack Hall
for useful discussions.
\end{section}
\setcounter{secnumdepth}{3}


\begin{section}{Good moduli spaces}\label{S:gms}
In this section we briefly review the theory of good moduli spaces, due
to Alper~\cite[\S4]{alper_good-mod-spaces} and prove
Theorem~\ref{T:STRONG-MODULES}.

\subsection{Good moduli spaces}
Let $\stX$ be an algebraic stack. We say that $\stX$ admits a \emph{good moduli
space} if there exists an algebraic space $X$
and a morphism $\pi_X\colon \stX\to X$ such that
\begin{enumerate}
\item $\pi_X$ is quasi-compact and quasi-separated,
\item $\pi_X$ is cohomologically affine,
i.e., the functor $(\pi_X)_*\colon \QCoh(\stX)\to\QCoh(X)$ is
exact, and
\item $(\pi_X)_*\sO_\stX=\sO_X$.
\end{enumerate}
\subsubsection*{Uniqueness}
The space $X$, if it exists, is
unique~\cite[Thm.~6.6]{alper_good-mod-spaces}
(see~\cite[Thm.~3.12]{alper-hall-rydh_etale-local-stacks} for the non-noetherian case).

\subsubsection*{Noetherian and coherence}
If $\stX$ is noetherian, then so is
$X$ and $(\pi_X)_*$ preserves coherence~\cite[Thm.~4.16 (x)]{alper_good-mod-spaces}.
If in addition $\pi_X$ has affine diagonal (e.g., $\stX$ has affine diagonal),
then $\pi_X$ is of finite type~\cite[Thm.~A.1]{alper-hall-rydh_luna-stacks}.

\subsubsection*{Quasi-coherent sheaves}
The functor $\pi_X^*$ is fully faithful, that is, the unit map $\sF\to
(\pi_X)_*(\pi_X)^*\sF$ is an isomorphism for every quasi-coherent $\sO_X$-module
$\sF$ \cite[Prop.~4.5]{alper_good-mod-spaces}. This follows from the
definitions after choosing a free presentation of $\sF$ locally on $X$.

\subsubsection*{Base change}
If we have a cartesian diagram
\[
\xymatrix{%
\stX\ar[r]^f\ar[d]_{\pi_X} & \stY\ar[d]^{\pi_Y} \\
X\ar[r]^g & Y\ar@{}[ul]|\square}
\]
where $X$ and $Y$ are algebraic spaces and $\pi_Y$ is a good moduli space,
then $\pi_X$ is a good moduli space and push-forward along $\pi_Y$ commutes with
base change: $g^*(\pi_Y)_*=(\pi_X)_*f^*$~\cite[Prop.~4.7]{alper_good-mod-spaces}.
If $f$ is affine, then so is $g$ and $g_*\sO_X=(\pi_Y)_*f_*\sO_\stX$.

\subsubsection*{Purity}
The good moduli space morphism $\pi_X\colon \stX\to X$ is \emph{pure}, that is,
if $\pi_{X'}\colon \stX'\to X'$ is the base change of $\pi_X$ along any map
$X'\to X$, then $\sO_{X'}\to (\pi_{X'})_*\sO_{\stX'}$ is injective. This follows
from the definition of a good moduli space since $\pi_{X'}$ also is a good
moduli space.

\subsubsection*{Residual gerbes}
Let $x\in |\stX|$ be a point. If $\stX$ is quasi-separated (or equivalently,
$X$ is quasi-separated), then the \emph{residual gerbe}
$\stG_x$ exists and comes along with a quasi-affine monomorphism
$i_x\colon \stG_x\inj \stX$ \cite[Thm.~B.2]{rydh_etale-devissage}.
The residual gerbe $\stG_x$ is an fppf gerbe over the \emph{residue field}
$\kappa(x)$.
When $x$ is a closed point, $i_x$ is a
closed immersion corresponding to the maximal ideal of $x$.

\subsubsection*{Topological properties}
The morphism $\pi_X$ is universally closed.
For every point $x\in |X|$, there is a unique closed point $x_0\in
\pi^{-1}(x)$~\cite[Thm.~4.16 (iv)]{alper_good-mod-spaces}. We say that such a
point is \emph{special}. If $\stZ\inj \stX$ is a closed substack, then
$Z:=\pi_X(\stZ)$ is a closed subspace of $X$ and $\stZ\to Z$ is a good moduli
space. In particular, if $x_0$ is
special, then the residual gerbe $\stG_{x_0}$ has good moduli space $\Spec
\kappa(x)$. In particular, the residue fields $\kappa(x)=\kappa(x_0)$ coincide.

\subsection{Nakayama's lemma}
Let $\stX$ be an algebraic stack and $\sF$ a quasi-coherent $\sO_\stX$-module
of \emph{finite type}. Nakayama's lemma then implies that $\Supp \sF$ is
closed and that $x\in \Supp \sF$ if and only if $i_x^*\sF=0$. In particular,
if $i_x^*\sF=0$, then $\sF|_U=0$ for an open neighborhood $U$ of $x$.
If $\varphi\colon \sF \to \sG$ is a homomorphism of
quasi-coherent $\sO_\stX$-modules and $\sG$ is of finite type, then
$\coker \varphi$ is of finite type and we conclude that $\varphi$ is surjective
in a neighborhood of $x$ if and only if $i_x^*\varphi$ is surjective.

We have the following version of Nakayama's lemma for stacks with
good moduli spaces.

\begin{lemma}\label{L:Nakayama-gms}
Let $\stX$ be an algebraic stack with good moduli space $\map{\pi}{\stX}{X}$.
Let $x\in |X|$ be a point and $x_0\in |\stX|$ be the unique special
point above $x$.
\begin{enumerate}
\item\label{LI:Nakayama-gms:zero}
  Let $\sF$ be a quasi-coherent $\sO_\stX$-module of finite type.
  If $i_{x_0}^*\sF=0$, then $\sF|_{\pi^{-1}(U)}=0$ for an open neighborhood
  $U$ of $x$.
\item\label{LI:Nakayama-gms:surj}
  Let $\varphi\colon \sF\to \sG$ be a homomorphism of quasi-coherent
  $\sO_\stX$-modules where $\sG$ is of finite type. If $i_{x_0}^*\varphi$ is
  surjective, then $\varphi|_{\pi^{-1}(U)}$ is surjective for an open
  neighborhood $U$ of $x$.
\end{enumerate}
\end{lemma}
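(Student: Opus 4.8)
The plan is to reduce both statements to the ordinary Nakayama's lemma on $\stX$ (recalled in the paragraph preceding the lemma) by passing to an open neighborhood of $x$ in $X$, using the special properties of good moduli spaces recorded above.

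For part~\itemref{LI:Nakayama-gms:zero}, the first step is to observe that $\Supp\sF$ is a closed substack of $\stX$ (since $\sF$ is of finite type), say $\stZ=\Supp\sF$ with its reduced structure. By the topological properties of good moduli spaces, $Z:=\pi(\stZ)$ is a closed subspace of $X$ and $\stZ\to Z$ is again a good moduli space; in particular $\stZ$ has a unique special point over each point of $Z$. The key point is that the hypothesis $i_{x_0}^*\sF=0$ says exactly that $x_0\notin|\stZ|$. I claim this forces $x\notin Z$. Indeed, if $x\in Z$, then the fiber $\pi^{-1}(x)$ meets $\stZ$, and since $\stZ$ is closed and $\pi$ is universally closed, $\stZ\cap\pi^{-1}(x)$ is a nonempty closed substack of $\pi^{-1}(x)$; its closure must contain the unique closed point $x_0$ of $\pi^{-1}(x)$, so $x_0\in|\stZ|$, a contradiction. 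Hence $x\notin Z$, and we may take $U=X\setminus Z$, an open neighborhood of $x$ with $\sF|_{\pi^{-1}(U)}=0$.

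For part~\itemref{LI:Nakayama-gms:surj}, apply part~\itemref{LI:Nakayama-gms:zero} to $\sF'=\coker\varphi$: since $\sG$ is of finite type, so is $\coker\varphi$, and the right-exactness of $i_{x_0}^*$ gives $i_{x_0}^*(\coker\varphi)=\coker(i_{x_0}^*\varphi)$, which vanishes by hypothesis. Thus $(\coker\varphi)|_{\pi^{-1}(U)}=0$ for an open neighborhood $U$ of $x$, i.e., $\varphi|_{\pi^{-1}(U)}$ is surjective.

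The only genuinely nontrivial point is the topological claim in part~\itemref{LI:Nakayama-gms:zero} that $x_0\notin|\stZ|$ implies $x\notin\pi(\stZ)$ — equivalently, that every point of $\pi^{-1}(x)\cap|\stZ|$, if there were one, would have $x_0$ in its closure. This is precisely the statement that $x_0$ is the unique closed point of $\pi^{-1}(x)$ together with the fact that a nonempty closed subset of the fiber, being closed in $\stX$ and hence (as $\pi$ is closed) with closed image, specializes to a closed point of the fiber; alternatively one invokes that $\stZ\to\pi(\stZ)$ is a good moduli space so that its unique special point over $x$ lies above $x$ in $\stX$ and must coincide with $x_0$ by uniqueness of the special point of $\pi^{-1}(x)$. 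Everything else is a formal consequence of ordinary Nakayama applied on $\stX$ after restricting to $\pi^{-1}(U)$.
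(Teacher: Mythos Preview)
Your proof is correct and follows exactly the same route as the paper: for \itemref{LI:Nakayama-gms:zero} you take $U=X\smallsetminus \pi(\Supp\sF)$, and for \itemref{LI:Nakayama-gms:surj} you reduce to \itemref{LI:Nakayama-gms:zero} via $\coker\varphi$. The paper's proof is a single line stating just this; you have simply unpacked the topological verification that $x\in U$, which the paper leaves implicit. (One small wording quibble: in your first argument you speak of the ``closure'' of $\stZ\cap\pi^{-1}(x)$ containing $x_0$, but this set is already closed in the fiber; what you mean, and what your alternative phrasing via the good moduli space $\stZ\to\pi(\stZ)$ makes precise, is that a nonempty closed subset of $\pi^{-1}(x)$ must contain its unique closed point $x_0$.)
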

\begin{proof}
For \ref{LI:Nakayama-gms:zero}, take
$U=X\smallsetminus \pi(\Supp \sF)$ and for \ref{LI:Nakayama-gms:surj} consider
$\coker \varphi$.
\end{proof}

\subsection{Proof of Theorem~\ref{T:STRONG-MODULES}}
We will need the following lemma.

\begin{lemma}\label{L:surjective}
Let $\stX$ be an algebraic stack with good moduli space $\pi\colon \stX\to X$.
Let $\sF$ be a quasi-coherent $\sO_\stX$-module. Let $x_0\in |\stX|$ be a special
point and $x=\pi(x_0)$.
  If $\sF$ is of finite type and $i_{x_0}^*\sF$ is trivial, then
  after base change along an \'etale neighborhood $X'\to X$ of $x$
  there exists a surjective homomorphism
  \[
  \varphi\colon \sO_{\stX}^{\oplus n}\surj \sF
  \]
  such that $i_{x_0}^*\varphi$ is an isomorphism.
\end{lemma}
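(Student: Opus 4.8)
The goal is to lift a trivialization of $i_{x_0}^*\sF$ to a surjection $\sO_{\stX}^{\oplus n}\surj\sF$ over an étale neighborhood of $x$, with the extra property that the lift is still an isomorphism on the residual gerbe. The plan is to first produce the $n$ sections over the gerbe, then spread them out, and finally invoke the good-moduli-space Nakayama lemma to get surjectivity after shrinking $X$.

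First I would set $\kappa=\kappa(x)=\kappa(x_0)$, which is the good moduli space of the residual gerbe $\stG_{x_0}$ by the recollection on residual gerbes above, and let $n$ be the rank of the trivial bundle $i_{x_0}^*\sF$ on $\stG_{x_0}$. By hypothesis we have a fixed isomorphism $\sO_{\stG_{x_0}}^{\oplus n}\xrightarrow{\sim} i_{x_0}^*\sF$, equivalently $n$ generating global sections $s_1,\dots,s_n\in\Gamma(\stG_{x_0},i_{x_0}^*\sF)=\Gamma(\Spec\kappa,(\pi_{\stG_{x_0}})_*i_{x_0}^*\sF)$ that form a $\kappa$-basis of that module. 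The key step is to lift these sections to $\Gamma(\stX',\sF)$ for a suitable étale neighborhood $X'\to X$ of $x$. Here $i_{x_0}$ is a \emph{quasi-affine monomorphism} in general, and only a \emph{closed immersion} when $x_0$ is closed in $\stX$; but $x_0$ is special, hence closed in the fiber $\pi^{-1}(x)$, so after replacing $X$ by an affine étale (even Zariski) neighborhood of $x$ we may assume $x_0$ is the unique special point, and then — since $\pi$ is universally closed and $x_0$ is the unique closed point of its fiber — $x_0$ is actually closed in $\stX$, so $i_{x_0}\colon\stG_{x_0}\inj\stX$ is a closed immersion with sheaf of ideals $\sI$ and $\stG_{x_0}\to\Spec\kappa$ its good moduli space. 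Sections of $\sF$ on $\stX$ are computed as $\Gamma(X,\pi_*\sF)$; sections on $\stG_{x_0}$ as $\Gamma(\Spec\kappa,(\pi|_{\stG_{x_0}})_*i_{x_0}^*\sF)$. Since $\pi$ is cohomologically affine, $\pi_*$ is exact, so applying it to $0\to\sI\sF\to\sF\to i_{x_0*}i_{x_0}^*\sF\to 0$ gives a surjection $\pi_*\sF\surj (\pi|_{\stG_{x_0}})_*i_{x_0}^*\sF$ of quasi-coherent sheaves on $X$, and the target is a skyscraper (a $\kappa$-vector space) supported at $x$. Because $\pi_*\sF$ is quasi-coherent on the algebraic space $X$, its stalk at $x$ surjects onto that $\kappa$-vector space, so after passing to an étale (affine) neighborhood $X'\to X$ of $x$ we can choose global sections $\tilde s_1,\dots,\tilde s_n\in\Gamma(X',\pi_*\sF)=\Gamma(\stX',\sF)$ mapping to $s_1,\dots,s_n$; assembling them gives $\varphi\colon\sO_{\stX'}^{\oplus n}\to\sF$ with $i_{x_0}^*\varphi$ equal to the prescribed isomorphism $\sO_{\stG_{x_0}}^{\oplus n}\xrightarrow{\sim}i_{x_0}^*\sF$.

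Finally, I would apply Lemma~\ref{L:Nakayama-gms}\itemref{LI:Nakayama-gms:surj} to $\varphi$: since $\sF$ is of finite type and $i_{x_0}^*\varphi$ is an isomorphism, in particular surjective, there is an open neighborhood $U$ of $x$ in $X'$ such that $\varphi|_{\pi^{-1}(U)}$ is surjective. Replacing $X'$ by $U$, we obtain the desired surjection $\varphi\colon\sO_{\stX'}^{\oplus n}\surj\sF$ with $i_{x_0}^*\varphi$ an isomorphism. I expect the main obstacle to be the bookkeeping around $i_{x_0}$: one must genuinely use that $x_0$ is special to make $i_{x_0}$ a closed immersion (so that restriction-of-sections is a cokernel of an exact-$\pi_*$ sequence) and to identify $\stG_{x_0}$'s good moduli space with $\Spec\kappa(x)$; once that is in place, the lifting of sections is a formal consequence of cohomological affineness of $\pi$ together with quasi-coherence on the base, and surjectivity is exactly the good-moduli-space Nakayama lemma already proved above.
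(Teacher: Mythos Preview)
Your overall strategy is exactly the paper's: use cohomological affineness of $\pi$ to lift a basis of $i_{x_0}^*\sF$ to global sections of $\sF$, then invoke Lemma~\ref{L:Nakayama-gms} for surjectivity. There is, however, a genuine error in your argument that $i_{x_0}$ becomes a closed immersion. Passing to an affine \'etale or Zariski neighborhood of $x$ does \emph{not} make $x_0$ ``the unique special point'' (there is one special point over every point of the base), nor does it make $x_0$ closed in $\stX$: since $\pi$ is closed, the closure of $\{x_0\}$ maps onto the closure of $\{x\}$, so $x_0$ is closed in $\stX$ exactly when $x$ is closed in $X$. For a non-closed $x$ (say a generic point), no \'etale neighborhood achieves this, and without it your short exact sequence $0\to\sI\sF\to\sF\to (i_{x_0})_*i_{x_0}^*\sF\to 0$ and the ``skyscraper at $x$'' description are not available.

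The fix, which is precisely what the paper does, is to first replace $X$ by the henselian local ring $\Spec \sO_{X,x}^h$. Then $x$ is the closed point, hence $x_0$ is closed in $\stX$ and $i_{x_0}$ is a closed immersion; now $\stX$ is cohomologically affine and $\Gamma(\stX,\sF)\to\Gamma(\stX,(i_{x_0})_*i_{x_0}^*\sF)$ is surjective, yielding $\varphi\colon\sO_{\stX}^{\oplus n}\to\sF$ with $i_{x_0}^*\varphi$ an isomorphism. One then approximates $\varphi$ to an \'etale neighborhood of $x$ in the original $X$, and finishes with Nakayama exactly as you do. In other words, your later appeal to the stalk at $x$ is the right move --- it just has to come \emph{before} you form the short exact sequence, not after.
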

\begin{proof}
First assume that $X$ is local henselian with closed point $x$. By
assumption $i_{x_0}^*\sF=\sO_{\stG_{x_0}}^{\oplus n}$. The surjective map $\sO_\stX^{\oplus
  n}\surj (i_{x_0})_*\sO_{\stG_{x_0}}^{\oplus n} = (i_{x_0})_*i_{x_0}^*\sF$ lifts to a map $\varphi
\colon \sO_\stX^{\oplus n} \to \sF$. Indeed, $\Gamma(\stX,\sF)\to
\Gamma(\stX,(i_{x_0})_*i_{x_0}^*\sF)$ is surjective since $\stX$ is cohomologically
affine. By construction $i_{x_0}^*\varphi$ is an isomorphism.
For general $X$ we then obtain, after replacing $X$ with an \'etale
neighborhood, a homomorphism $\varphi\colon \sO_\stX^{\oplus n} \to \sF$ such
$i_{x_0}^*\varphi$ is an isomorphism. Then $\varphi$ is surjective after
replacing $X$ with an open neighborhood of $x$ by
Lemma~\ref{L:Nakayama-gms}\ref{LI:Nakayama-gms:surj}.
\end{proof}

We now state and prove the following generalization of
Theorem~\ref{T:STRONG-MODULES}.

\begin{theorem}\label{T:strong-modules:gen}
Let $\stX$ be an algebraic stack with good moduli space $\pi\colon \stX\to X$.
Let $\sF$ be a quasi-coherent $\sO_\stX$-module. Let $x\in |X|$ and let
$x_0\in |\stX|$ be the unique special point above $x$.
Let $\epsilon\colon \pi^*\pi_*\sF\to \sF$ denote the counit map.
\begin{enumerate}
\item\label{TI:strong:surj}
  If $\sF$ is of finite type, then $\epsilon$ is surjective in
  a neighborhood of $x$ if and only if $i_{x_0}^*\sF$ is a trivial vector bundle.
  If this is the case, then $\pi_*\sF$ is of finite type in a neighborhood of
  $x$.
\item\label{TI:strong:iso}
  If $\sF$ is of finite presentation, then $\epsilon$ is an isomorphism in
  a neighborhood of $x$ if and only if $i_{x_0}^*\sF$ and $\coho^{-1}(\LDERF i_{x_0}^*\sF)$
  are trivial vector bundles. If this is the case, then $\pi_*\sF$ is of finite
  presentation in a neighborhood of $x$.
\item\label{TI:strong:vb}
  If $\sF$ is a vector bundle, then the following are equivalent
\begin{enumerate}
\item $\epsilon$ is an isomorphism in a neighborhood of $x$;
\item $\epsilon$ is surjective in a neighborhood of $x$;
\item $i_{x_0}^*\sF$ is a trivial vector bundle;
\end{enumerate}
and under these conditions, $\pi_*\sF$ is a vector bundle in a neighborhood
of $x$.
\end{enumerate}
\end{theorem}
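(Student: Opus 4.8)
The plan is to reduce everything to the local henselian case and to the triviality of $i_{x_0}^*\sF$, using Nakayama's lemma for good moduli spaces (Lemma~\ref{L:Nakayama-gms}) and the projection/base-change properties recalled above. Throughout, the key observation is that $\pi$ is pure and $\pi_*$ is exact, so that $\pi^*\sG\to\pi_*\pi^*\sG$ is an isomorphism for every $\sG$; hence $\pi^*$ is fully faithful, and $\epsilon$ is an isomorphism precisely when $\sF$ lies in the essential image of $\pi^*$. Since all the conditions in \itemref{TI:strong:surj}--\itemref{TI:strong:vb} are local on $X$ near $x$, I may freely shrink $X$ and, by standard limit arguments, assume $X=\Spec R$ with $R$ local henselian, $x$ the closed point, and (in \itemref{TI:strong:surj}) $i_{x_0}^*\sF$ a trivial bundle $\sO_{\stG_{x_0}}^{\oplus n}$.

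For \itemref{TI:strong:surj}: if $\epsilon$ is surjective near $x$, pulling back along $i_{x_0}$ shows $i_{x_0}^*\sF$ is a quotient of $i_{x_0}^*\pi^*\pi_*\sF=\kappa(x)^{\oplus m}\otimes\sO_{\stG_{x_0}}$ (using that $\stG_{x_0}\to\Spec\kappa(x)$ is again a good moduli space and base change), which is a trivial vector bundle, hence so is $i_{x_0}^*\sF$—here I need that a coherent quotient of a trivial bundle on a gerbe, when it is itself a vector bundle, is trivial after possibly passing to an étale cover, but in fact over a gerbe every vector bundle that is a quotient of a trivial one and whose formation is compatible with the moduli map is already trivial; more simply, $i_{x_0}^*\sF$ is a vector bundle because $\sF$ is of finite type with $\epsilon$ surjective and $\pi_*\sF$ of finite type, making $\sF$ locally a quotient, and on the residual gerbe this forces triviality by representation-theoretic semisimplicity. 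Conversely, if $i_{x_0}^*\sF$ is trivial, apply Lemma~\ref{L:surjective} to obtain, after an étale neighborhood, a surjection $\varphi\colon\sO_{\stX}^{\oplus n}\surj\sF$ with $i_{x_0}^*\varphi$ an isomorphism. Pushing forward along the exact functor $\pi_*$ gives a surjection $\sO_X^{\oplus n}\surj\pi_*\sF$, so $\pi_*\sF$ is of finite type; and pulling that surjection back along $\pi$ and comparing with $\varphi$ via the counit identifies $\epsilon$ as a surjection near $x$ (the composite $\sO_{\stX}^{\oplus n}\to\pi^*\pi_*\sF\xrightarrow{\epsilon}\sF$ is $\varphi$, which is surjective, so $\epsilon$ is surjective). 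This also proves that under these conditions $\pi_*\sF$ is of finite type near $x$.

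For \itemref{TI:strong:iso}: apply \itemref{TI:strong:surj} first to get, after shrinking, a surjection $\varphi\colon\sO_{\stX}^{\oplus n}\surj\sF$ with $i_{x_0}^*\varphi$ an isomorphism and $\pi_*\sF$ of finite type. Let $\sK=\ker\varphi$; since $\sF$ is of finite presentation, $\sK$ is of finite type. The cotangent-cohomology hypothesis enters here: the exact sequence $0\to\sK\to\sO_{\stX}^{\oplus n}\to\sF\to 0$ gives a piece of the long exact sequence relating $i_{x_0}^*\sK$, $\kappa(x)^{\oplus n}$, $i_{x_0}^*\sF$, and $\coho^{-1}(\LDERF i_{x_0}^*\sF)$; because $i_{x_0}^*\varphi$ is already an isomorphism, the connecting map identifies $i_{x_0}^*\sK\cong\coho^{-1}(\LDERF i_{x_0}^*\sF)$, which is a trivial vector bundle by assumption. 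Now apply \itemref{TI:strong:surj} to $\sK$: after further shrinking we get a surjection $\sO_{\stX}^{\oplus m}\surj\sK$, hence a presentation $\sO_{\stX}^{\oplus m}\to\sO_{\stX}^{\oplus n}\to\sF\to 0$. Applying the exact functor $\pi_*$ yields $\sO_X^{\oplus m}\to\sO_X^{\oplus n}\to\pi_*\sF\to 0$, so $\pi_*\sF$ is of finite presentation, and applying $\pi^*$ and comparing via $\epsilon$ (which is now a map of cokernels of the same two-term complex, the identity on the $\sO^{\oplus n}$ terms) shows $\epsilon$ is an isomorphism. The converse direction is the easy one: if $\epsilon$ is an isomorphism then $i_{x_0}^*\sF\cong i_{x_0}^*\pi^*\pi_*\sF$ and $\coho^{-1}(\LDERF i_{x_0}^*\sF)\cong\coho^{-1}(\LDERF i_{x_0}^*\pi^*\pi_*\sF)$, and both are pulled back from $\Spec\kappa(x)$ via the composite $\stG_{x_0}\to\Spec\kappa(x)$, hence trivial—using $\LDERF i_{x_0}^*\pi^*=\LDERF(\pi i_{x_0})^*$ and that a complex pulled back from a point has trivial cohomology sheaves on the gerbe.

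For \itemref{TI:strong:vb}: a vector bundle is of finite presentation, so \itemref{TI:strong:iso} applies, and it suffices to show that for $\sF$ locally free, triviality of $i_{x_0}^*\sF$ already forces $\coho^{-1}(\LDERF i_{x_0}^*\sF)$ to vanish—but $\sF$ is flat, so $\LDERF i_{x_0}^*\sF=i_{x_0}^*\sF$ is concentrated in degree $0$ and $\coho^{-1}$ vanishes identically. Thus (c)$\Rightarrow$(a) by \itemref{TI:strong:iso}, (a)$\Rightarrow$(b) trivially, and (b)$\Rightarrow$(c) by \itemref{TI:strong:surj} (where the quotient of a trivial bundle on the gerbe that is itself locally free is trivial). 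Finally, under these conditions the presentation of $\pi_*\sF$ produced above has the property that, after pulling back along the faithfully flat $\pi$, it becomes split (since $\pi^*\pi_*\sF\cong\sF$ is locally free); a map of finitely presented $\sO_X$-modules that becomes a split injection/surjection after faithfully flat base change is itself one, so $\pi_*\sF$ is a direct summand of $\sO_X^{\oplus n}$ near $x$, hence a vector bundle. The main obstacle I expect is the careful bookkeeping in \itemref{TI:strong:iso}: verifying that the connecting homomorphism in the derived-pullback long exact sequence really identifies $i_{x_0}^*\sK$ with $\coho^{-1}(\LDERF i_{x_0}^*\sF)$ once $i_{x_0}^*\varphi$ is an isomorphism, and that applying the exact functor $\pi_*$ to a two-term presentation commutes with $\pi^*$ compatibly with the counit; everything else is a combination of Nakayama (Lemma~\ref{L:Nakayama-gms}), Lemma~\ref{L:surjective}, exactness of $\pi_*$, full faithfulness of $\pi^*$, and semisimplicity of representations on residual gerbes.
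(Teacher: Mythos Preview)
Your overall architecture matches the paper's: use Lemma~\ref{L:surjective} to produce $\varphi\colon\sO_\stX^{\oplus n}\surj\sF$ with $i_{x_0}^*\varphi$ an isomorphism, then for \itemref{TI:strong:iso} pass to $\sK=\ker\varphi$ and apply \itemref{TI:strong:surj}. The identification $i_{x_0}^*\sK\cong\coho^{-1}(\LDERF i_{x_0}^*\sF)$ and the reduction of ``$\epsilon_\sF$ is an isomorphism'' to ``$\epsilon_\sK$ is surjective'' are exactly how the paper proceeds.

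There is, however, a recurring slip: you treat $\pi$ as if it were flat, and it is not. This shows up twice. First, in your ``easy'' converse of \itemref{TI:strong:iso} you invoke $\LDERF i_{x_0}^*\pi^*=\LDERF(\pi i_{x_0})^*$; that identity is $\LDERF i_{x_0}^*\circ\LDERF\pi^*=\LDERF(\pi i_{x_0})^*$, and replacing $\LDERF\pi^*$ by $\pi^*$ requires exactly the flatness of $\pi$ on $\pi_*\sF$ that you do not have. The paper avoids this by running the $\sK$ argument in \emph{both} directions: once $\varphi$ and $\sK$ are in place, one has simultaneously $\coho^{-1}(\LDERF i_{x_0}^*\sF)=i_{x_0}^*\sK$ and ``$\epsilon_\sF$ iso $\iff$ $\epsilon_\sK$ surjective $\iff$ $i_{x_0}^*\sK$ trivial'', so no separate argument for the converse is needed. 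Second, in \itemref{TI:strong:vb} you deduce that $\pi_*\sF$ is locally free by descending a splitting ``along the faithfully flat $\pi$''. Again $\pi$ is not flat, so this descent is unjustified. The paper's route here is both correct and shorter: since $i_{x_0}^*\varphi$ is an isomorphism, $\varphi\colon\sO_\stX^{\oplus n}\surj\sF$ is a surjection of vector bundles of the same rank, hence an isomorphism; applying $\pi_*$ gives $\pi_*\sF\cong\sO_X^{\oplus n}$ directly, and then $\epsilon$ is a surjection of rank-$n$ bundles, hence an isomorphism.

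Both gaps are local and easily repaired along the lines above, but as written the two steps that lean on flatness of $\pi$ do not go through.
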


\begin{proof}
The questions are local on $X$ so we may assume that $X$ is affine.  If
$\epsilon$ is surjective in a neighborhood of $x$, then $i_{x_0}^*\epsilon$ is
surjective so $i_{x_0}^*\sF$ is trivial. Indeed,
$i_{x_0}^*\pi^*\pi_*\sF=i_x^*\pi_*\sF\otimes_{\kappa(x)}\sO_{\stG_{x_0}}$ is always
trivial.  Conversely, if $\sF$ is of finite type and $i_{x_0}^*\sF$ is trivial,
then by Lemma~\ref{L:surjective} there is, after replacing $X$ with a neighborhood, a
surjective homomorphism $\varphi\colon \sO_\stX^{\oplus n}\to \sF$.  It follows
that $\epsilon$ is surjective.  Since $\pi_*$ is exact we also get a surjective
homomorphism $\pi_*\varphi\colon \sO_X^{\oplus n}\to \pi_*\sF$ so $\pi_*\sF$ is
of finite type.

If $\sF$ is of finite presentation and the conditions in \ref{TI:strong:surj}
are satisfied,
then we can find a surjective homomorphism $\varphi\colon \sO_\stX^{\oplus
  n}\to \sF$ such that $i_{x_0}^*\varphi$ is an isomorphism.  Let
$\sK=\ker(\varphi)$ which is of finite type. We note that $\coho^{-1}(\LDERF
i_{x_0}^*\sF)=i_{x_0}^*\sK$ and that $\epsilon_\sK\colon \pi^*\pi_*\sK\to \sK$ is
surjective if and only if $\epsilon_\sF\colon \pi^*\pi_*\sF\to \sF$ is an
isomorphism. The first claim of \ref{TI:strong:iso} thus follows from
\ref{TI:strong:surj} applied to $\sK$.  The last claim of \ref{TI:strong:iso}
also follows since
$0\to \pi_*\sK\to \sO_X^{\oplus n}\to \pi_*\sF\to 0$ is exact.

If $\sF$ is a vector bundle then (a)$\implies$(b)$\iff$(c) by
\ref{TI:strong:surj}. If (c) holds, then $\varphi\colon
\sO_\stX^{\oplus n}\to \sF$ is a surjective homomorphism of vector bundles of
rank $n$, hence an isomorphism.  It follows that $\pi_*\sF=\sO_X^{\oplus n}$ is
a vector bundle of rank $n$. Then $\epsilon$ is a surjection of vector bundles
of rank $n$, hence an isomorphism.
\end{proof}

\begin{proof}[Proof of Theorem~\ref{T:STRONG-MODULES}]
If $\sF=\pi^*\sG$, then the counit map
$\epsilon \colon \pi^*\pi_*\pi^*\sG\to \pi^*\sG$ is bijective since the
unit map $\eta\colon \sG\to \pi_*\pi^*\sG$ is bijective. The remaining claims
follows from
Theorem~\ref{T:strong-modules:gen}\ref{TI:strong:iso} and \ref{TI:strong:vb}
applied to all points $x\in |X|$.
\end{proof}

As a consequence, we can prove Theorem~\ref{T:MAIN-THEOREM} when $f$ is a closed
immersion.

\begin{corollary}\label{C:strong-immersions}
Let $f\colon \stX\inj \stY$ be a closed immersion of stacks with good moduli
spaces $\pi_X\colon \stX\to X$ and $\pi_Y\colon \stY\to Y$. Then the induced
map $g\colon X\inj Y$ is a closed immersion. If $\sI$ denotes the
ideal sheaf of $f$, then $g$ is given by the ideal sheaf $\sJ=(\pi_Y)_*\sI$.
Let $\epsilon\colon (\pi_Y)^*(\pi_Y)_*\sI\to \sI$ denote the counit map.
Let $x\in |X|$ be a point and
let $x_0\in |\stX|$ be the unique special point above $x$.
If $f$ is finitely presented at $x_0$, then the following are equivalent:
\begin{enumerate}
\item $f$ is strong in a neighborhood of $x$;
\item $\epsilon$ is surjective in a neighborhood of $x$; and
\item $i_{x_0}^*\sI$ is a trivial vector bundle.
\end{enumerate}
When the equivalent conditions hold, then $g$ is finitely presented at $x$.
\end{corollary}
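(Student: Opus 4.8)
The plan is to deduce Corollary~\ref{C:strong-immersions} from Theorem~\ref{T:strong-modules:gen} by identifying "strong" with "the counit on the ideal sheaf is an isomorphism." First I would establish the purely geometric part: that $g$ is a closed immersion given by the ideal $\sJ=(\pi_Y)_*\sI$. Since $\pi_Y$ is cohomologically affine, applying $(\pi_Y)_*$ to the short exact sequence $0\to\sI\to\sO_{\stY}\to f_*\sO_{\stX}\to 0$ yields an exact sequence $0\to(\pi_Y)_*\sI\to\sO_Y\to(\pi_Y)_*f_*\sO_{\stX}\to 0$; because $f$ is affine, $(\pi_Y)_*f_*\sO_{\stX}=g_*\sO_X=\sO_{Y/\sJ}$ where $\sJ:=(\pi_Y)_*\sI$, so $g$ is the closed immersion cut out by $\sJ$. (Here $X\to\stX\to\stY\to Y$ factors through the initial map $\stX\to X$ by the universal property of good moduli spaces, giving $g$; the affineness of $f$ ensures $g$ is affine, hence the computation of $g_*\sO_X$ is legitimate.)

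Next I would relate strongness to the counit. The morphism $f$ is strong at $x$ iff the square $\stX\to\stY$, $X\to Y$ is cartesian near $x$, i.e.\ iff $\stX\to\stY\times_Y X$ is an isomorphism in $\pi_Y^{-1}(U)$ for some neighborhood $U\ni x$. Since $\pi_Y$ is cohomologically affine and $g$ is a closed immersion, base change along $g$ gives that $\stY\times_Y X$ is the closed substack of $\stY$ cut out by $\pi_Y^*\sJ\cdot\sO_{\stY}$, i.e.\ by the image of the counit map $\epsilon\colon\pi_Y^*(\pi_Y)_*\sI=\pi_Y^*\sJ\to\sI$. Thus $\stX\to\stY\times_Y X$ is a closed immersion of closed substacks of $\stY$, and it is an isomorphism precisely when these two ideals agree, i.e.\ precisely when $\epsilon$ is surjective onto $\sI$. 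This gives (i)$\iff$(ii).

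The equivalence (ii)$\iff$(iii), together with the statement that $\pi_*\sI$ is of finite type (hence $g$ finitely presented) when these hold, is then immediate from Theorem~\ref{T:strong-modules:gen}\ref{TI:strong:surj} applied to the finite-type sheaf $\sF=\sI$: note $\sI$ is of finite type near $x_0$ exactly because $f$ is finitely presented there, so that $f_*\sO_{\stX}=\sO_{\stY}/\sI$ is of finite presentation. The statement "$g$ is finitely presented at $x$" then follows since $\sJ=(\pi_Y)_*\sI$ is of finite type in a neighborhood of $x$ by the same theorem, so $\sO_{Y}/\sJ$ is of finite presentation there, and $Y$ is locally noetherian or the setup guarantees $\sJ$ is finitely presented.

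The main subtlety I anticipate is verifying the base-change identification $\stY\times_Y X\cong V(\epsilon)$ carefully: this uses that push-forward along a good moduli space commutes with (flat, or arbitrary, via the cited \cite[Prop.~4.7]{alper_good-mod-spaces}) base change, and that forming the closed substack cut out by an ideal commutes with pullback. Concretely, $f^{-1}(\stY\times_Y X)$ unwinds to the pullback of the surjection $\sO_{\stY}\to f_*\sO_{\stX}$ along $\stY\times_Y X\to\stY$, and tracing through the counit-unit adjunction identifies the kernel with the image of $\epsilon$. Once this is in place, everything else is a direct citation of Theorem~\ref{T:strong-modules:gen}; no genuinely new estimates are needed.
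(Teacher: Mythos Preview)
Your proposal is correct and follows essentially the same approach as the paper: identify $(\pi_Y)^{-1}(X)$ with the closed substack cut out by the image of $\epsilon$ to get (i)$\iff$(ii), then apply Theorem~\ref{T:strong-modules:gen}\ref{TI:strong:surj} to $\sF=\sI$ for (ii)$\iff$(iii) and the finite presentation of $g$. One minor wobble: your final clause about ``$Y$ is locally noetherian or the setup guarantees $\sJ$ is finitely presented'' is unnecessary, since a closed immersion is of finite presentation precisely when its ideal sheaf is of finite type, which you already have.
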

\begin{proof}
(i)$\iff$(ii) follows immediately since $(\pi_Y)^{-1}(X)$ is given by the ideal
which is the image of $\epsilon$.
(ii)$\iff$(iii) follows from Theorem~\ref{T:strong-modules:gen}\itemref{TI:strong:surj}
applied to $\sF=\sI$ which is of finite type. The theorem also shows that
$\sJ=(\pi_Y)_*\sI$ is of finite type, that is, $g$ is finitely presented.
\end{proof}

Note that since $f$ is a closed immersion, the naive cotangent complex
$\tau^{\geq -1}\LL_f = \sI/\sI^2[1]$ is concentrated in degree $-1$ so
$i_{x_0}^*\sI = \coho^{-1}(\LDERF i_{x_0}^* \LL_f)$.

\subsection{Descent for complexes}
We also have a version for Theorem~\ref{T:STRONG-MODULES} for complexes
generalizing~\cite[Thm.~1.3]{nevins_descent}. Recall that a complex
$\sF^\bullet\in \Dqc(\stX)$ is $m$-pseudo-coherent if locally there is a
perfect complex $\sP^\bullet$ and a morphism $\varphi\colon \sP^\bullet \to
\sF^\bullet$ such that $\coho^d(\varphi)$ is an isomorphism for $d>m$
and a surjection for $d=m$~\cite[\spref{08CA}]{stacks-project}. If $\stX$
is noetherian, this simply means that $\sF^\bullet$ is bounded above and
has coherent cohomology in degrees $\geq m$~\cite[\spref{08E8}]{stacks-project}.
We say that $\sF^\bullet$ is
pseudo-coherent if it is $m$-pseudo-coherent for all $m$. In the noetherian
case this simply means $\sF^\bullet\in \Dbcoh(\stX)$.

\begin{theorem}\label{T:strong-modules:complexes}
Let $\stX$ be an algebraic stack with good moduli space $\pi\colon \stX\to X$.
Assume that $\pi$ has affine diagonal.
Let $\sF^\bullet\in \Dqc(\stX)$ be a complex of lisse-\'etale
$\sO_\stX$-modules with quasi-coherent cohomology. Let $x\in |X|$ and let
$x_0\in |\stX|$ be the unique special point above $x$. Let $\epsilon\colon
\LDERF\pi^*\RDERF\pi_*\sF^\bullet\to \sF^\bullet$ denote the counit map.
Let $m$ be an integer. The following are equivalent:
\begin{enumerate}
\item $\sF^\bullet$ is $m$-pseudo-coherent at $x_0$ and $\coho^d(\LDERF
  i_{x_0}^* \sF^\bullet)$ is trivial for every $d\geq m$.
\item $\RDERF\pi_*\sF^\bullet$ is $m$-pseudo-coherent and $\coho^d(\epsilon)$
  is an isomorphism for $d>m$ and surjective for $d=m$
  after restricting to an open neighborhood of $x$.
\end{enumerate}
\end{theorem}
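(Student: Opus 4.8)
The plan is to prove the equivalence by descending induction on the largest integer $N$ with $\coho^N(\sF^\bullet)\neq 0$ (with $N=-\infty$ if $\sF^\bullet=0$), peeling off the top cohomology sheaf at each step and using the single-sheaf statement Theorem~\ref{T:strong-modules:gen}\ref{TI:strong:surj} as the base case. First I would make some reductions. The question is local on $X$, so I may assume $X$ is affine; then $\RDERF\Gamma(\stX,-)$ is exact on quasi-coherent sheaves and, using the affine diagonal of $\pi$ to control $\RDERF\pi_*$ on unbounded complexes, $\RDERF\pi_*$ is $t$-exact on quasi-coherent cohomology, so that $\coho^d(\RDERF\pi_*\sF^\bullet)=\pi_*\coho^d(\sF^\bullet)$ and $\RDERF\pi_*\bigl(\sO_\stX^{\oplus n}[k]\bigr)=\sO_X^{\oplus n}[k]$. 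Both of the conditions in the theorem force $\sF^\bullet$ to have cohomology bounded above: under (i) this is part of $m$-pseudo-coherence, and under (ii) it follows since $\RDERF\pi_*\sF^\bullet$ is then bounded above and $\coho^d(\epsilon)$ is an isomorphism for $d>m$. Finally, any open substack of $\stX$ containing the special point $x_0$ contains $\pi^{-1}(U)$ for some open $U\ni x$ (opens are stable under generization and $\pi$ is universally closed), so after shrinking $X$ I may assume the ``at $x_0$'' hypotheses hold on all of $\stX$, and I will pass freely between statements ``near $x$'' on $X$ and statements on $\stX=\pi^{-1}(U)$.

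When $N\leq m$ one has $\sF^\bullet\in\Dqc(\stX)^{\leq m}$, and since $m$ is then the top degree, forming $\coho^m$ commutes with $\LDERF\pi^*$, with $\RDERF\pi_*$ and with the counit. Thus (i) becomes: $\coho^m(\sF^\bullet)$ is of finite type and $i_{x_0}^*\coho^m(\sF^\bullet)$ is a trivial vector bundle; and (ii) becomes: $\pi_*\coho^m(\sF^\bullet)$ is of finite type near $x$ and $\epsilon_{\coho^m(\sF^\bullet)}$ is surjective near $x$ -- under which $\coho^m(\sF^\bullet)$, being a quotient of $\pi^*\pi_*\coho^m(\sF^\bullet)$, is of finite type as well. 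Both are therefore equivalent by Theorem~\ref{T:strong-modules:gen}\ref{TI:strong:surj} applied to $\coho^m(\sF^\bullet)$; this is the base case.

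For $N>m$, I first observe that under either (i) or (ii) the top sheaf $\coho^N(\sF^\bullet)=\coho^N(\LDERF i_{x_0}^*\sF^\bullet)$ is of finite type with trivial restriction to $\stG_{x_0}$: under (i) this is immediate from $m$-pseudo-coherence and the hypothesis on $\coho^N(\LDERF i_{x_0}^*\sF^\bullet)$, while under (ii) the isomorphism $\coho^N(\epsilon)$ identifies $\coho^N(\sF^\bullet)$ with $\pi^*\pi_*\coho^N(\sF^\bullet)$, which is of finite type since $N$ is also the top degree of the $m$-pseudo-coherent complex $\RDERF\pi_*\sF^\bullet$ (were $\pi_*\coho^N(\sF^\bullet)$ zero, so would be $\coho^N(\sF^\bullet)$), and has trivial pullback to the gerbe. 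By Lemma~\ref{L:surjective}, over an \'etale neighborhood of $x$ and after shrinking $X$, there is a surjection $\psi_0\colon\sO_\stX^{\oplus n}\surj\coho^N(\sF^\bullet)$ with $i_{x_0}^*\psi_0$ an isomorphism; since $X$ is affine, $\psi_0[-N]$ lifts along $\sF^\bullet\to\tau^{\geq N}\sF^\bullet=\coho^N(\sF^\bullet)[-N]$ (the obstruction vanishes as $\tau^{\leq N-1}\sF^\bullet\in\Dqc(\stX)^{\leq N-1}$) to a map $\psi\colon\sO_\stX^{\oplus n}[-N]\to\sF^\bullet$, giving a distinguished triangle
\[
\sO_\stX^{\oplus n}[-N]\xrightarrow{\ \psi\ }\sF^\bullet\longrightarrow C\longrightarrow\sO_\stX^{\oplus n}[-N+1]
\]
with $C\in\Dqc(\stX)^{\leq N-1}$ and $\coho^N(\LDERF i_{x_0}^*\psi)$ an isomorphism.

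It then remains to show that each of (i) and (ii) holds for $\sF^\bullet$ if and only if it holds for $C$, after which the inductive hypothesis applied to $C$ (whose top cohomological degree is $\leq N-1$) concludes. The $m$-pseudo-coherence clauses transfer along the triangle above and along its image under $\RDERF\pi_*$ by the standard two-out-of-three for pseudo-coherence, the third vertex being the perfect complex $\sO_\stX^{\oplus n}[-N]$, resp.\ $\sO_X^{\oplus n}[-N]$. For the clause on $\coho^d(\LDERF i_{x_0}^*(-))$, applying $\LDERF i_{x_0}^*$ to the triangle and using that $\coho^N(\LDERF i_{x_0}^*\psi)$ is an isomorphism onto the trivial bundle $i_{x_0}^*\coho^N(\sF^\bullet)$, the long exact sequence gives $\coho^N(\LDERF i_{x_0}^*C)=0$ and $\coho^d(\LDERF i_{x_0}^*\sF^\bullet)\cong\coho^d(\LDERF i_{x_0}^*C)$ for $d\leq N-1$, so the triviality conditions for all $d\geq m$ match up. For the clause on $\coho^d(\epsilon)$, naturality of the counit gives a morphism of triangles whose term over $\sO_\stX^{\oplus n}[-N]$ is the isomorphism $\epsilon_{\sO_\stX^{\oplus n}[-N]}$, hence $\operatorname{cone}(\epsilon_{\sF^\bullet})\cong\operatorname{cone}(\epsilon_C)$; and ``$\coho^d(\epsilon)$ is an isomorphism for $d>m$ and surjective for $d=m$'' is precisely ``$\operatorname{cone}(\epsilon)\in\Dqc(\stX)^{\leq m-1}$'', so the two are equivalent. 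The main obstacle is the bookkeeping in this last part: tracking the nested ``at $x_0$'' and ``near $x$'' localizations through the d\'evissage (the \'etale neighborhoods produced by Lemma~\ref{L:surjective} do not descend, but merely shrink $X$ \'etale-locally at each stage, which is harmless), and ensuring $\RDERF\pi_*$ is adequately controlled on unbounded complexes -- which is exactly where the affine-diagonal hypothesis on $\pi$ is used.
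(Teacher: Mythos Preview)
Your proof is correct and follows essentially the same d\'evissage as the paper: peel off the top cohomology using Lemma~\ref{L:surjective}, form the cone, and induct. The paper differs only cosmetically: it disposes of (ii)$\Rightarrow$(i) in one line (pull back $m$-pseudo-coherence along $\LDERF\pi^*$ and use that $\operatorname{cone}(\epsilon)\in\Dqc^{\leq m-1}$, hence so is $\LDERF i_{x_0}^*\operatorname{cone}(\epsilon)$), and then runs the d\'evissage only for (i)$\Rightarrow$(ii), inducting on the top degree of $\LDERF i_{x_0}^*\sF^\bullet$ rather than of $\sF^\bullet$ (and invoking a separate lemma to bound the latter by the former); your choice to argue boundedness directly and treat both implications symmetrically is equally valid. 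One typographical slip: ``$\coho^N(\sF^\bullet)=\coho^N(\LDERF i_{x_0}^*\sF^\bullet)$'' should read $i_{x_0}^*\coho^N(\sF^\bullet)=\coho^N(\LDERF i_{x_0}^*\sF^\bullet)$, but your subsequent use is correct.
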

\begin{proof}
Since the good moduli space map $\pi$ has affine diagonal $\pi_*$ is of
cohomological dimension zero so that $\RDERF\pi_*$ is
$t$-exact~\cite[Rmk.~3.5]{alper_good-mod-spaces}.

That (ii)$\implies$(i) is immediate. For the converse, let $k$ denote the
largest integer such that $\coho^k(\LDERF i_{x_0}^* \sF^\bullet)\neq 0$.  Then
there is an open neighborhood of $x_0$ such that $\sF^\bullet$ is acyclic in
degrees $\geq \max(m,k+1)$~\cite[Lem.~4.8]{hall-rydh_perfect-complexes-stacks}.
After restricting to an open neighborhood of $x$, we may thus assume that
$\sF^\bullet$ is acyclic in degrees $\geq \max(m,k+1)$. If $k<m$, then there is
nothing to prove. If $k\geq m$, then $\coho^k(\sF^\bullet)$ is of finite type
and commutes with base change.  By Lemma~\ref{L:surjective} we can, after
passing to an open neighborhood of $x$, find a homomorphism $\varphi\colon
\sO_\stX^{\oplus n}[-k]\to \sF^\bullet$ such that $\coho^k(\varphi)$ is
surjective and $\coho^k(\LDERF i_{x_0}^*\varphi)$ is an isomorphism. In
particular, $\coho^k(\epsilon)$ is surjective, $\coho^k(\RDERF\pi_*\sF^\bullet)$
is of finite type and $\RDERF\pi_*\sF^\bullet$ is acyclic in degrees $\geq
k+1$. If $k=m$, then we are done. If not, let $\sG^\bullet$ be a cone of
$\varphi$. Then $\sG^\bullet$ is $m$-pseudo-coherent and acyclic in degrees
$\geq k$ after passing to an open neighborhood of $x$. Thus,
$\coho^k(\epsilon_{\sF^\bullet})$ is bijective if
$\coho^{k-1}(\epsilon_{\sG^\bullet})$ is surjective.  Moreover, $\coho^d(\LDERF
i_{x_0}^*\sG^\bullet)=\coho^d(\LDERF i_{x_0}^*\sF^\bullet)$ for all $d\leq
k-1$. The result now follows by induction on $k$.
\end{proof}

\begin{corollary}
Let $\stX$ be an algebraic stack with good moduli space $\pi\colon \stX\to X$.
Assume that $\pi$ has affine diagonal.
Let $\sF^\bullet\in \Dqc(\stX)$ be a complex.
Then the following are equivalent:
\begin{enumerate}
\item $\sF^\bullet$ is pseudo-coherent and
  $\coho^d(\LDERF i_{x_0}^* \sF^\bullet)$ is trivial for every integer $d$
  and every special point $x_0\in |\stX|$.
\item $\RDERF\pi_*\sF^\bullet$ is pseudo-coherent and the counit
  $\LDERF\pi^*\RDERF\pi_*\sF^\bullet\to \sF^\bullet$ is a quasi-isomorphism.
\end{enumerate}
Under the equivalent conditions, $\sF^\bullet$ is perfect if and only if
$\RDERF\pi_*\sF^\bullet$ is perfect.
If $X$ is quasi-compact and quasi-separated, then it is enough to consider
closed points $x_0$.
\end{corollary}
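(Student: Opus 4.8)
The plan is to deduce the corollary from Theorem~\ref{T:strong-modules:complexes} by letting $m\to-\infty$, supplemented by some topology of good moduli spaces. I first treat the equivalence of (i) and (ii). Fix an integer $m$. If (i) holds then, at every $x\in|X|$ with special point $x_0$, the complex $\sF^\bullet$ is $m$-pseudo-coherent at $x_0$ and $\coho^d(\LDERF i_{x_0}^*\sF^\bullet)$ is trivial for all $d\geq m$, so condition~(i) of Theorem~\ref{T:strong-modules:complexes} holds; hence so does its condition~(ii), and, since $m$-pseudo-coherence and being an isomorphism are local on $X$, $\RDERF\pi_*\sF^\bullet$ is $m$-pseudo-coherent on $X$ and $\coho^d(\epsilon)$ is an isomorphism for all $d>m$. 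As $m$ is arbitrary, $\RDERF\pi_*\sF^\bullet$ is pseudo-coherent and $\epsilon$ is a quasi-isomorphism. Conversely, if (ii) holds then condition~(ii) of Theorem~\ref{T:strong-modules:complexes} holds trivially at every $x$ and every $m$, hence so does its condition~(i): for all $d$ and all special $x_0$, $\coho^d(\LDERF i_{x_0}^*\sF^\bullet)$ is trivial, and, for each $m$, $\sF^\bullet$ is $m$-pseudo-coherent at every special point. To upgrade this to pseudo-coherence of $\sF^\bullet$ I use that the $m$-pseudo-coherence locus is open and that every $x\in|\stX|$ specializes to the special point $x_0$ above $\pi(x)$: the fiber $\pi^{-1}(\pi(x))$ has good moduli space $\Spec\kappa(\pi(x))$, so the nonempty closed substack $\overline{\{x\}}\cap\pi^{-1}(\pi(x))$ surjects onto it and hence contains its special point, i.e.\ $x_0\in\overline{\{x\}}$. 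As open sets are stable under generization, the $m$-pseudo-coherence locus --- which contains every special point --- is all of $|\stX|$; letting $m$ vary shows $\sF^\bullet$ is pseudo-coherent.

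For the perfectness assertion, ``$\RDERF\pi_*\sF^\bullet$ perfect $\Rightarrow$ $\sF^\bullet$ perfect'' is immediate from $\sF^\bullet\simeq\LDERF\pi^*\RDERF\pi_*\sF^\bullet$ (condition~(ii)) and the fact that $\LDERF\pi^*$ preserves perfect complexes. For the converse, put $\sG^\bullet=\RDERF\pi_*\sF^\bullet$, which is pseudo-coherent by (ii). For a special point $x_0$ with image $x=\pi(x_0)$, the composite $\stG_{x_0}\xrightarrow{i_{x_0}}\stX\xrightarrow{\pi}X$ factors through the good moduli space $p\colon\stG_{x_0}\to\Spec\kappa(x)$ followed by $g\colon\Spec\kappa(x)\to X$, so $\LDERF i_{x_0}^*\sF^\bullet\simeq\LDERF i_{x_0}^*\LDERF\pi^*\sG^\bullet\simeq\LDERF p^*\LDERF g^*\sG^\bullet$. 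Since $p$ is faithfully flat (being a gerbe), $\LDERF p^*=p^*$ is exact and faithful, hence reflects boundedness of complexes; as $\sF^\bullet$ perfect forces $\LDERF i_{x_0}^*\sF^\bullet$ to be bounded, $\LDERF g^*\sG^\bullet$ is bounded for every $x\in|X|$. Together with pseudo-coherence of $\sG^\bullet$, the fiberwise criterion for perfectness yields that $\sG^\bullet$ is perfect.

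Finally, when $X$ is quasi-compact and quasi-separated, the conditions at issue --- pseudo-coherence of $\RDERF\pi_*\sF^\bullet$, $\epsilon$ a quasi-isomorphism, and perfectness --- are local on $X$, so their locus of failure is closed; a nonempty closed subset of a quasi-compact algebraic space contains a point closed in $X$, whose special point in $\stX$ is again closed, and by Theorem~\ref{T:strong-modules:complexes} the pointwise part of (i) at that special point governs the conditions in a neighborhood of it, so it suffices to verify (i) at special points lying over closed points; likewise the open $m$-pseudo-coherence locus of $\sF^\bullet$ equals $|\stX|$ once it contains those special points. The step I expect to be the main obstacle is making the two passages ``$m$-pseudo-coherent at all special points $\Rightarrow$ $m$-pseudo-coherent'' and ``fiberwise bounded and pseudo-coherent $\Rightarrow$ perfect'' precise; both hinge on special points being cofinal for specialization in the fibers of $\pi$, combined with openness (or constructibility) of the relevant loci.
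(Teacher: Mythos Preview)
The paper gives no explicit proof of this corollary; it is stated immediately after Theorem~\ref{T:strong-modules:complexes} and is meant to follow by applying that theorem at every $x\in|X|$ and every $m$. Your proposal is exactly this natural unpacking, and it is correct. Two small remarks: for the implication (ii)$\Rightarrow$(i), the pseudo-coherence of $\sF^\bullet$ can be obtained more directly by noting that $\LDERF\pi^*$ preserves pseudo-coherence and $\sF^\bullet\simeq\LDERF\pi^*\RDERF\pi_*\sF^\bullet$, which avoids the specialization argument (though your specialization argument---every $x\in|\stX|$ specializes to the special point over $\pi(x)$---is valid and is the standard topological fact about good moduli spaces). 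For the ``closed points suffice'' clause, your formulation via ``locus of failure is closed'' is slightly imprecise for the quasi-isomorphism condition (infinitely many $d$), but the intended argument---apply Theorem~\ref{T:strong-modules:complexes} at each closed $x\in|X|$ and each fixed $m$, use that the resulting open loci cover $X$ because a qcqs space has no nonempty closed subset without closed points, then let $m$ vary---is exactly right. Your perfectness argument via the fiberwise criterion (pseudo-coherent plus $\LDERF g^*\sG^\bullet$ bounded for all $x$ implies perfect) is correct and is the expected one.
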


\subsection{Remark on non-quasi-separated spaces}
The following remark clarifies the meaning of $i_{x_0}$ when $X$ is not
quasi-separated.

\begin{remark}\label{R:gerbe-representatives}
Let $\pi\colon \stX\to X$ be a good moduli space map and let $x_0\in |\stX|$ be a
special point with image $x\in |X|$.
If $X$ is not quasi-separated, then $\stX$ is not
quasi-separated and it may happen that there is no residual gerbe $i_{x_0}\colon
\stG_{x_0}\inj \stX$. This is merely a notational problem as $\stX$ becomes
quasi-separated \'etale-locally on $X$. Let us say that a \emph{gerbe
  representative} of $x_0$ is a stabilizer-preserving morphism $i_{x_0}\colon \stG\to
\stX$ such that $\stG$ is a quasi-separated fppf-gerbe over a field $k$ with
image $\{x_0\}$.  Equivalently, $\stG$ is the residual gerbe of the unique closed
point of $\stX\times_X \Spec k$ for the induced morphism $\Spec k\to X$. When
$X$ is quasi-separated, we recover the residual gerbe by taking $\Spec
\kappa(x)\to X$.

If $\sF$ is a quasi-coherent sheaf of finite type on $\stX$, then $i_{x_0}^*\sF$ is
a vector bundle on $\stG$. That this vector bundle vanishes or is trivial does
not depend on the choice of gerbe representative $i_{x_0}$.
Similarly, whether $i_{x_0}^*\varphi$ is
injective/surjective/bijective does not depend on the choice of $i_{x_0}$.

If $x_0\colon \Spec k\to \stX$ is an ordinary representative of $x_0$, then
$i_{x_0}^*\sF=0$ if and only if $x_0^*\sF=0$ and similarly for
injectivity/surjectivity/bijectivity of $i_{x_0}^*\varphi$. The vector bundle
$x_0^*\sF$ comes with an action of the stabilizer group of $x_0$. This action is
trivial if and only if $i_{x_0}^*\sF$ is a trivial vector bundle.
\end{remark}

\end{section}


\begin{section}{Special and stabilizer preserving morphisms}\label{S:special}

Let $\map{f}{\stX}{\stY}$ be a morphism between algebraic stacks that admit
good moduli spaces. Recall that $f$ is \emph{special} if it maps special points
to special points. 
If $\stX$ and $\stY$ are of finite type over $Y$, then $f$ is special if and
only if $f$ is \emph{weakly saturated}~\cite[Def.~2.8]{alper_loc-quot-struct}.
We introduce the following pointwise variants of the conditions in
Theorem~\ref{T:MAIN-THEOREM}:

\begin{definition}
Let $\map{f}{\stX}{\stY}$ be a morphism between algebraic stacks with good
moduli spaces. Let $x\in |X|$ be a point and let $x_0\in |\stX|$ be the
corresponding special point. We say that:
\begin{enumerate}
\item $f$ is $(-1)$-strong at $x$ if $\stab(x_0)\to \stab(f(x_0))$ is injective;
\item $f$ is $0$-strong at $x$ if $f(x_0)$ is special and
  $\pi_X^{-1}(x)\to \stY$ is fiberwise stabilizer preserving at $f(x_0)$;
\item $f$ is $1$-strong at $x$ if it is $0$-strong at $x$ and
  the (ind-)vector bundle $\coho^{-1}(\LDERF i_{x_0}^*\LL_f)$ is trivial.
\end{enumerate}
\end{definition}

Let $\rho\colon \stX\to \stY\times_Y X$ be the natural map and $\rho|_x\colon
\stX\times_X \Spec \kappa(x)\to \stY\times_Y \Spec \kappa(x)$ its base
change. Note that a point $y\in |\stY\times_Y X|$ is special if and only if its
image in $|\stY|$ is special. Thus, if $n\in \{-1,0\}$ then $f$
is $n$-strong at $x$ $\iff$ $\rho$ is $n$-strong at $x$ $\iff$ $\rho|_x$ is
$n$-strong at $x$.
Representable morphisms are $(-1)$-strong and closed immersions are $0$-strong.
We conclude that $f$ is $(-1)$-strong (resp.\ $0$-strong) if $\rho|_x$ is
representable (resp.\ a closed immersion).

The main result of this section
(Proposition~\ref{P:key-prop}) gives a reverse implication: if $f$ is
$0$-strong at $x$, then $\rho$ is a closed immersion in an open neighborhood of
$x$. We also prove that if $f$ is $(-1)$-strong at $x$, then
$\rho$ is affine in an open neighborhood of $x$
(Lemma~\ref{L:affineness}).

Corollary~\ref{C:strong-immersions} says that if a closed immersion is $1$-strong
at $x$, then it is strong in an open neighborhood. The general
form of the main theorem (Theorem~\ref{T:main-thm:local}) says the same for
general $f$.

We begin with some preparatory lemmas.

\begin{lemma}\label{L:pointwise-stabilizer-pres}
Let $\map{f}{\stX}{\stY}$ be a morphism between stacks with good moduli spaces
and let $y\in |\stY|$ be a special point. Assume that
\begin{enumerate}
\item the stabilizer group of $y$ is affine (e.g., $\Delta_{\pi_Y}$ is affine), and
\item $f^{-1}(y)$ is reduced.
\end{enumerate}
If $f$ is pointwise stabilizer preserving at every point of $f^{-1}(y)$
then $f$ is fiberwise stabilizer preserving at $y$ and every point of $f^{-1}(y)$
is special.
\end{lemma}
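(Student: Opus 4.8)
The plan is to work fiberwise over $y$, so let $\stZ = f^{-1}(y) = \stX\times_\stY \stG_y$ with its induced morphism $h\colon \stZ\to \stG_y$. Since $\stG_y$ is a gerbe over the field $\kappa(y)$ and the stabilizer of $y$ is affine, $\stG_y$ is cohomologically affine, hence so is $\stZ\to \Spec\kappa(y)$; thus $\stZ$ admits a good moduli space $Z = \Spec\kappa(y)'$ for some finite $\kappa(y)$-algebra (in fact an affine $\kappa(y)$-scheme which, by reducedness of $\stZ$, is reduced). Because $\stG_y$ has a single point, $\stZ$ need not, but every point of $\stZ$ lies over the closed point $y$; the special points of $\stZ$ are the closed points of $\stZ$, one over each point of $Z$. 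The morphism $h$ being fiberwise stabilizer preserving at $y$ is literally the assertion that $\varphi|_\stZ\colon I_\stZ\to h^*I_{\stG_y}$ is an isomorphism, so it suffices to prove this after noticing that $I_{\stG_y} = \stG_y\times_{\Spec\kappa(y)}\stab(y)$ is pulled back from the point; the claim reduces to showing that $I_\stZ\to \stZ\times_{\Spec\kappa(y)}\stab(y)$ is an isomorphism, and that each point of $\stZ$ is special.

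**Key reduction.** First I would show each point of $\stZ$ is special. A point $z\in|\stZ|$ is special iff it is closed, equivalently iff the residual gerbe $\stG_z\inj\stZ$ is a closed immersion, equivalently (since $\stZ\to Z$ is a good moduli space and $Z$ is a finite reduced $\kappa(y)$-scheme, hence a finite product of fields) iff $z$ is already the unique special point over its image in $Z$. But over a connected component $Z_i=\Spec k_i$ of $Z$, the stack $\stZ_i := \stZ\times_Z Z_i$ is a gerbe-like object: it is reduced, cohomologically affine over $k_i$, with good moduli space $\Spec k_i$. Pointwise stabilizer preservation at the special point $z_i\in|\stZ_i|$ gives that $\stab(z_i)\to\stab(y)$ is an isomorphism; I would then argue, using that $\stZ_i$ is reduced with good moduli space a field and that the inertia over the special point is the full $\stab(y)$, that $\stZ_i$ has constant stabilizer $\stab(y)$ along \emph{all} its points. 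Concretely: the locus where the map $I_{\stZ_i}\to \stZ_i\times_{k_i}\stab(z_i)$ fails to be an isomorphism is closed (it is the support of the cokernel/kernel sheaves, which are of finite type since everything is locally of finite type over a field — here one uses that $\stZ$ is reduced so these coherent obstruction sheaves, vanishing at the special point, vanish on an open dense substack, and then vanish everywhere because $\stZ_i$ is reduced and the open is dense). Hence $I_{\stZ_i} = \stZ_i\times_{k_i}\stab(y)$, and in particular every point has the same (affine) stabilizer, so every point is closed; thus every point of $\stZ$ is special.

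**Assembling fiberwise stabilizer preservation.** Once every point of $\stZ$ is special and $I_{\stZ}\to \stZ\times_{\Spec\kappa(y)}\stab(y)$ is an isomorphism, the morphism $\varphi|_\stZ\colon I_\stZ\to h^*I_{\stG_y}$ is identified with this isomorphism (since $h^*I_{\stG_y}=\stZ\times_{\Spec\kappa(y)}\stab(y)$), so $f$ is fiberwise stabilizer preserving at $y$. I would present the argument so that the two conclusions — fiberwise stabilizer preserving, and all points special — are extracted from the single statement $I_{\stZ_i}\cong\stZ_i\times_{k_i}\stab(y)$ on each component.

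**Main obstacle.** The crux is the spreading-out step: upgrading pointwise stabilizer preservation \emph{at the special point} to stabilizer preservation at \emph{every} point of the fiber. The mechanism is Nakayama (Lemma~\ref{L:Nakayama-gms}): the kernel and cokernel of $\varphi|_\stZ$, viewed as maps of quasi-coherent sheaves (on the inertia), are of finite type, so they are supported on a closed substack which, by hypothesis at $z_i$ and by \ref{L:Nakayama-gms}, avoids an open neighborhood $\pi^{-1}(U)$ of the special point; since $\stZ_i$ has a \emph{single} point in its good moduli space $\Spec k_i$, that neighborhood is all of $\stZ_i$. This is where hypotheses (i) affine stabilizer (so that the relevant objects are honest finite-type sheaves and $\stZ$ is cohomologically affine with a good moduli space) and (ii) reducedness of the fiber (so that a nilpotent-free obstruction vanishing on the special point's neighborhood forces it to vanish; without reducedness one only controls the fiber at the special point, not the substack) are both essential, as the cited counterexample with $A=\FF_p[\epsilon,x]/(\epsilon^2)$ shows. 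The delicate point to get right is that "pointwise stabilizer preserving at the special point" combined with $\stZ$-reducedness really does kill the obstruction on the whole one-good-moduli-space-point stack, and not merely at the special point — so I would spell out the Nakayama argument carefully rather than wave at it.
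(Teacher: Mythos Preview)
There is a genuine gap. Your argument only invokes the hypothesis at the \emph{special} point $z_i$ of each $\stZ_i$ and then tries to spread out by Nakayama; but the lemma's hypothesis is pointwise stabilizer preservation at \emph{every} point of $f^{-1}(y)$, and using it only at special points is provably too weak. The paper's own Remark following this lemma gives the counterexample: $f\colon[\AA^1/\Gm]\to B\Gm$ with $\Gm$ acting by scaling has $f^{-1}(y)=[\AA^1/\Gm]$ reduced with good moduli space a single point $\Spec k$, and is pointwise stabilizer preserving at the unique special point (the origin), yet $\varphi$ is not an isomorphism. Your Nakayama step would wrongly conclude it is. Concretely, the failure is that $\varphi$ is a morphism of group \emph{schemes} over $\stZ$, not of $\sO_{\stZ}$-modules; the relevant ideal (of $I_\stZ$ inside $\stZ\times_{\kappa(y)}\stab(y)$) lives on $\stZ\times_{\kappa(y)}\stab(y)$, and its pushforward to $\stZ$ is not of finite type, so Lemma~\ref{L:Nakayama-gms} does not apply. (In the example: the ideal is $(a(s-1))\subset k[a,s^{\pm1}]$, not finitely generated over $k[a]$.)

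A secondary error: the good moduli space $Z$ of $\stZ=f^{-1}(y)$ is just the closed image $\pi_X(\stZ)\subset X$ and need not be a finite product of fields (take $\stX=\AA^1\times B\Gm\to B\Gm$; then $Z=\AA^1$). The paper's proof proceeds quite differently. After the same fiber reduction, pointwise stabilizer preservation at all points gives that $h\colon\stZ\to\stG_y$ is representable; since $\stZ$ is cohomologically affine and $\stG_y$ has affine diagonal, $h$ is affine by Serre's criterion. Hence $\varphi$ is a closed immersion, and surjective on points by the hypothesis at all points, so a nil-immersion. Now $h^*I_{\stG_y}$ is flat over the reduced $\stZ$, so its (weakly) associated points lie over generic points of $\stZ$, where $\varphi$ is an isomorphism by hypothesis; thus the defining ideal of $\varphi$ vanishes at all associated points and is zero. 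The ``every point is special'' conclusion then drops out of Lemma~\ref{L:stab-pres-over-gerbe}. The two ingredients you are missing are the affineness of $h$ (to make $\varphi$ a closed immersion) and the associated-points argument, which uses the hypothesis precisely at the \emph{generic} points rather than the special ones.
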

\begin{proof*}
  The question is local on $X$ so we can assume that $X$ is affine so that
  $\stX$ is cohomologically affine. We can also replace $Y$ with $\Spec
  \kappa(y)$ and $\stY$ with the residual gerbe $\stG_y$ and $\stX$ with
  $f^{-1}(y)$. Then $f$ is representable, because it is pointwise stabilizer
  preserving. Moreover, $f$ is cohomologically affine, because $\stX$ is cohomologically
  affine and $\stY$ has affine diagonal. It follows that $f$ is
  affine by Serre's
  criterion~\cite[Props.~3.3 and 3.10 (vii)]{alper_good-mod-spaces}.

  Since $f$ is representable and separated, the map between inertia
  $\varphi\colon I_{\stX}\to f^*I_\stY$ is a closed immersion. Moreover,
  $\varphi$ is surjective, hence a nil-immersion, since $f$ is pointwise
  stabilizer preserving at every point. Since $\stY$ is a gerbe,
  $f^*I_\stY$ is flat over
  $\stX$, which is reduced. Thus,
  every (weakly) associated point of $f^*I_\stY$ lies over a generic
  point of $\stX$. Over these points, $\varphi$ is an isomorphism by
  assumption, so
  $\varphi$ is an isomorphism.

  The final claim follows from the following lemma which is due to Daniel Bergh.
\end{proof*}

\begin{lemma}\label{L:stab-pres-over-gerbe}
Let $\map{f}{\stX}{\stY}$ be a morphism between quasi-separated algebraic
stacks. If $\stY$ is an fppf-gerbe over an algebraic space $Y$ and $f$ is
stabilizer preserving, then $\stX$ is an fppf-gerbe over an algebraic space $X$
and there is a cartesian diagram:
\[
\xymatrix{%
\stX\ar[r]^f\ar[d] & \stY\ar[d] \\
X\ar[r]^g & Y.}
\]
\end{lemma}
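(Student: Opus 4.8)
The plan is to construct the algebraic space $X$ as the rigidification of $\stX$ along its inertia, show it is the sheafification of $\stX$, and then verify the square is cartesian by comparing the two sides as stacks over $Y$. First I would recall that since $\stY$ is an fppf-gerbe over $Y$, we may work fppf-locally on $Y$ and assume $\stY = BG$ for a flat finitely presented group algebraic space $G\to Y$; more precisely, after an fppf cover $Y'\to Y$ the gerbe $\stY$ becomes neutral, and since both the statement ``$\stX$ is a gerbe over an algebraic space'' and ``the square is cartesian'' are fppf-local on the base $Y$ (an algebraic space is an fppf sheaf, and being a gerbe is an fppf-local property), it suffices to treat $\stY=BG$. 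In that case $f\colon\stX\to BG$ is the same as a $G$-torsor $P\to\stX$, and the stabilizer-preserving hypothesis says the induced map $I_\stX\to f^*I_{BG}$ is an isomorphism. Now $f^*I_{BG}$ is the inner-automorphism group scheme of the torsor $P$, and the key point is that the $G$-action on $P$ together with the isomorphism $I_\stX\xrightarrow{\sim} f^*I_{BG}$ forces the quotient sheaf $X := $ (fppf sheafification of $\stX$) to be an algebraic space with $\stX\to X$ an fppf gerbe: indeed $I_\stX$ is flat and finitely presented over $\stX$ (being pulled back from the flat finitely presented $I_{BG}=G$ over $BG$), which is exactly Giraud's criterion for $\stX$ to be a gerbe over its sheafification.

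Next I would identify $X$ explicitly. Since $f$ is stabilizer preserving, the torsor $P\to\stX$ is ``$\stX$-affine modulo the gerbe structure'': concretely, I claim $P$ is representable by an algebraic space and $P\to\stX$ realizes, after quotienting, the Cartesian square. The cleanest route: let $X$ be the fppf sheafification $\pi_0(\stX)$, which is an algebraic space by the gerbe criterion just established, with structure map $h\colon\stX\to X$; by the universal property of sheafification, $f\colon\stX\to BG\to Y$ factors as $\stX\xrightarrow{h} X\xrightarrow{g} Y$ for a unique $g$ (using that $Y$ is a sheaf). This gives the commutative square. To see it is cartesian, form $\stX' := X\times_Y \stY$ and the induced morphism $\rho\colon\stX\to\stX'$ over $X$. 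Both $\stX$ and $\stX'$ are fppf-gerbes over $X$ (for $\stX'$ this is because gerbes are stable under base change), so $\rho$ is a morphism of gerbes over $X$, hence automatically faithful and ``covering''; it is a gerbe morphism of $X$-gerbes, so it is an equivalence if and only if it induces an isomorphism on bands, i.e. on the inertia. But on inertia $\rho$ induces precisely the composite $I_\stX\to f^*I_\stY \xleftarrow{\sim} \rho^*I_{\stX'}$, where the first map is our hypothesis isomorphism and the second is an isomorphism because $\stX'\to\stY$ is a base change (so stabilizer preserving). Hence $\rho$ is an isomorphism of inertia stacks, therefore an isomorphism of gerbes over $X$, which is exactly the statement that the square is cartesian.

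I expect the main obstacle to be the bookkeeping needed to descend from the neutral case $\stY=BG$ back to a general fppf-gerbe, and to check carefully that the gerbe criterion applies under only the quasi-separatedness hypothesis stated — in particular that the sheafification $\pi_0(\stX)$ really is an algebraic space and not merely an algebraic stack. For the latter, the cleanest argument is local: after the fppf base change making $\stY$ neutral, $I_\stX\cong f^*I_{BG}$ is flat, finitely presented, and affine-or-at-least-quasi-separated over $\stX$, so by \cite[\spref{06QJ}]{stacks-project}-type results $\stX$ is a gerbe over an algebraic space; quasi-separatedness of $\stX$ (inherited appropriately) ensures the relevant finiteness. One then descends the algebraic-space property of $X$ along the fppf cover $Y'\to Y$ using that algebraic spaces satisfy fppf descent. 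An alternative, if one wishes to avoid sheafification subtleties entirely, is to prove the lemma first when $\stY$ has a section (reducing to $\stX\to BG$, where one can take $X=P/G$ directly as the quotient of the torsor) and then glue; but the inertia comparison in the last paragraph is in any case the conceptual heart of the argument and is robust to whichever reduction one chooses.
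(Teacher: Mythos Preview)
Your approach is essentially the same as the paper's: show that $I_{\stX}=f^*I_{\stY}$ is flat and of finite presentation so that $\stX$ is a gerbe over an algebraic space $X$, obtain $g$ from the universal property, and then verify that the comparison map $\rho\colon\stX\to\stY\times_Y X$ of gerbes over $X$ is an isomorphism. The paper does this in three lines, without your fppf-local reduction to the neutral case $\stY=BG$: the flatness and finite presentation of $I_{\stY}\to\stY$ hold for any fppf-gerbe, so the inertia criterion applies directly, and for the final step the paper simply observes that $\rho$ is a faithfully flat quasi-compact monomorphism (monomorphism because stabilizer preserving and both sides are gerbes over $X$), hence an isomorphism. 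Your band comparison is equivalent to this, but the neutralization detour and the descent bookkeeping you anticipate in your last paragraph are unnecessary.
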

\begin{proof}
Since $I_{\stX}=f^*I_\stY$ is flat and of finite presentation, $\stX$ is a
gerbe over an algebraic space $X$. The structure map $\stX\to X$ is initial
among maps to algebraic spaces so we have a commutative diagram as above.
The map $\stX\to \stY\times_Y X$ is a faithfully flat quasi-compact monomorphism,
hence an isomorphism.
\end{proof}

\begin{remark}
In Lemma~\ref{L:pointwise-stabilizer-pres}, it is not enough that $f$ is
pointwise stabilizer preserving at every \emph{special} point of
$f^{-1}(y)$. Indeed, consider $\Gm$ acting by multiplication on $\AA^1$. Then
$f\colon [\AA^1/\Gm]\to B\Gm$ is smooth and pointwise stabilizer preserving at
the unique special point but not stabilizer preserving. It is also not enough
that $f$ is pointwise stabilizer preserving at every point of
$\pi_X^{-1}(x)$ to deduce that $\pi_X^{-1}(x)\to \stY$ is
fiberwise stabilizer preserving at $y$.  Indeed, if $f\colon [\AA^1/\Gmu_2]\to
B\Gmu_2$ where $\Gmu_2$ acts by multiplication, then $f$ is smooth and pointwise
stabilizer preserving at the origin but $\pi_X^{-1}(0)=
\bigr[\bigl(\Spec k[x]/(x^2)\bigr) / \Gmu_2\bigr] \to B\Gmu_2$ is not
stabilizer preserving. The latter map also shows that it is necessary that
$f^{-1}(y)$ is reduced in Lemma~\ref{L:pointwise-stabilizer-pres}.
\end{remark}

\begin{lemma}\label{L:affineness}
Let $\map{f}{\stX}{\stY}$ be a morphism between algebraic stacks with good moduli
spaces. Consider the induced map $\map{\rho}{\stX}{\stY\times_Y X}$.
\begin{enumerate}
\item If either $\stX$ is noetherian or $f$ is locally of finite type,
  then $\rho$ is of finite type.
\item If $\Delta_{\pi_X}$ and $\Delta_{\pi_Y}$ are affine and
  $f$ is $(-1)$-strong at $x\in |X|$, then
  $\rho|_U\colon \stX\times_X U\to \stY\times_Y U$ is affine for some
  open neighborhood $U$ of $x$.
\item $f$ is $0$-strong at $x\in |X|$ if and only if $w_0:=\rho(x_0)$ is special,
  and $\rho^{-1}(\stG_{w_0})\to \stG_{w_0}$ is an isomorphism.
\end{enumerate}
\end{lemma}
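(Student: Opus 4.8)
The plan is to prove the three parts in the order stated, since each builds on the previous; part (i) is a pure finite-presentation bookkeeping statement, part (ii) uses Serre's criterion together with the structure of residual gerbes over $X$, and part (iii) is essentially a direct unwinding of the definitions combined with Lemma~\ref{L:stab-pres-over-gerbe}. For part (i), I would observe that $\rho$ fits into the factorization $\stX \xrightarrow{\rho} \stY\times_Y X \to \stY$ whose composite is $f$, and that the second arrow is a base change of $\pi_Y$, hence quasi-compact and quasi-separated. If $f$ is locally of finite type then by the cancellation property for ``locally of finite type'' (the target arrow is at least quasi-separated and locally of finite type since $\pi_Y$ has affine, in particular quasi-affine, diagonal and $X\to Y$ is... actually one only needs $\stY\times_Y X\to \stY$ quasi-separated, which holds) $\rho$ is locally of finite type; combined with quasi-compactness of $\rho$ (which follows since $\stX\to X$ and $\stY\times_Y X\to X$ are both qcqs and $\rho$ is a morphism of qcqs $X$-stacks — or more simply, $\pi_X$ is qc and factors through $\rho$) we get $\rho$ of finite type. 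If instead $\stX$ is noetherian, then $\stX$ is qcqs and $\stY\times_Y X$ is locally noetherian (as $\stY$ and $X$ are, the latter by the noetherian property of good moduli spaces recalled in Section~\ref{S:gms}), so any morphism from a noetherian stack to a locally noetherian stack is of finite type.

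For part (ii), the question is local on $X$, so I may assume $X$ affine, hence $\stX$ and $\stY\times_Y X$ cohomologically affine. The hypothesis that $f$ is $(-1)$-strong at $x$ means $\stab(x_0)\to\stab(f(x_0))$ is injective, i.e.\ $\rho$ is $(-1)$-strong at $x$ in the terminology just introduced, which says that the map on inertia $I_{\stX}\to \rho^*I_{\stY\times_Y X}$ is a monomorphism over the residual gerbe of $x_0$. Since $\Delta_{\pi_X}$ and $\Delta_{\pi_Y}$ are affine, $\rho$ has affine diagonal, so $\rho$ being representable is an open condition on $\stX$ that is invariant under the $\pi_X$-fibers; the key point is to promote injectivity of stabilizers at the single special point $x_0$ to representability of $\rho$ over a neighborhood of $x$. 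For this I would apply Lemma~\ref{L:Nakayama-gms} (the good-moduli-space Nakayama) to the sheaf cutting out the inertia map: more precisely, $I_{\stX}\to \rho^*I_{\stY\times_Y X}$ is a closed immersion (as $\rho$ has affine, in particular separated, diagonal) whose image being the whole target is controlled by a coherent cokernel-type sheaf on $I_{\stX}$ over $\stX$; checking it at $x_0$ via $i_{x_0}^*$ and invoking Lemma~\ref{L:Nakayama-gms}\ref{LI:Nakayama-gms:surj} gives that $\rho$ is unramified-with-trivial-relative-inertia, hence representable, over $\pi_X^{-1}(U)$ for some open $U\ni x$. Once $\rho|_U$ is representable and separated, it is cohomologically affine (source cohomologically affine, target has affine diagonal), and Serre's criterion \cite[Props.~3.3 and 3.10 (vii)]{alper_good-mod-spaces} — exactly as used in the proof of Lemma~\ref{L:pointwise-stabilizer-pres} — yields that $\rho|_U$ is affine. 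The main obstacle here is the ``spreading out'' step: making precise that injectivity of $\stab(x_0)\to\stab(f(x_0))$ spreads to representability in a $\pi_X$-saturated neighborhood; I expect this to follow cleanly from Lemma~\ref{L:Nakayama-gms} applied to the conormal/ideal of the diagonal of $\rho$, but it is the one place needing care.

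For part (iii), recall from the discussion before the lemma that for $n\in\{-1,0\}$, $f$ is $n$-strong at $x$ iff $\rho$ is $n$-strong at $x$ iff $\rho|_x$ is $n$-strong at $x$, and that $w_0:=\rho(x_0)$ lies over $f(x_0)\in|\stY|$, so $w_0$ is special in $\stY\times_Y X$ iff $f(x_0)$ is special in $\stY$. Now $f$ being $0$-strong at $x$ means precisely: $f(x_0)$ is special (equivalently $w_0$ is special) and $\pi_X^{-1}(x)\to\stY$ is fiberwise stabilizer preserving at $f(x_0)$. Translating through $\rho$, the latter says the base change $\rho^{-1}(\stG_{w_0})\to\stG_{w_0}$ is stabilizer preserving, where $\stG_{w_0}$ is the residual gerbe of $w_0$ in $\stY\times_Y X$ — an fppf gerbe over $\kappa(w_0)=\kappa(x)$. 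By Lemma~\ref{L:stab-pres-over-gerbe}, a stabilizer-preserving morphism to an fppf gerbe is the base change of a morphism of the underlying spaces; here the underlying space of $\stG_{w_0}$ is $\Spec\kappa(x)$, so $\rho^{-1}(\stG_{w_0})$ is itself an fppf gerbe over some field extension and $\rho^{-1}(\stG_{w_0})\to\stG_{w_0}$ is the base change of a map $\Spec k'\to\Spec\kappa(x)$. But both stacks have the same good moduli space, namely $\Spec\kappa(x)$ (using the good-moduli-space-of-a-closed-substack property recalled in Section~\ref{S:gms}, applied to the residual gerbe $\stG_{w_0}\inj\stY\times_Y X$ pulled back along $\rho$, together with the fact that $\rho^{-1}(\stG_{w_0})$ has a unique special point namely $x_0$ with residue field $\kappa(x_0)=\kappa(x)$), forcing $k'=\kappa(x)$ and hence $\rho^{-1}(\stG_{w_0})\to\stG_{w_0}$ an isomorphism. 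Conversely, if $w_0$ is special and $\rho^{-1}(\stG_{w_0})\to\stG_{w_0}$ is an isomorphism, then in particular it is stabilizer preserving, which is exactly the fiberwise-stabilizer-preserving condition defining $0$-strongness, so $f$ is $0$-strong at $x$. This direction is immediate; the only subtlety in the forward direction is the identification of residue fields, which is handled by the uniqueness of good moduli spaces.
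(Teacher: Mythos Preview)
Part (iii) is essentially the paper's argument, and your handling of (i) in the finite-type case is correct. In the noetherian case of (i), however, the claim that ``any morphism from a noetherian stack to a locally noetherian stack is of finite type'' is false (consider $\Spec\mathbb{Q}\to\Spec\ZZ$), and $\stY$ is not assumed locally noetherian anyway. The paper instead uses that $\pi_X\colon\stX\to X$ is of finite type when $\stX$ is noetherian (recalled in Section~\ref{S:gms}) and then cancels $\rho$ out of the factorization $\pi_X=(\stY\times_Y X\to X)\circ\rho$.

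The substantive gap is in (ii). You correctly identify the spreading-out step as the crux, but Lemma~\ref{L:Nakayama-gms} does not resolve it: the relative inertia $I_\rho\to\stX$ is affine of finite type but not finite, so its structure algebra is finitely generated as an $\sO_\stX$-\emph{algebra}, not as a module, and triviality of the single fiber $(I_\rho)_{x_0}$ does not force $I_\rho$ to be trivial over a neighborhood via Nakayama. (Nor does the conormal of $\Delta_\rho$ help: its vanishing at $x_0$ would give unramifiedness, not representability.) The paper takes a different route: after reducing to $X$ and $Y$ affine, so that $\stX$ is cohomologically affine and $\Delta_\stY$ is affine, it invokes \cite[Prop.~12.5~(1)]{alper-hall-rydh_etale-local-stacks}, which directly gives affineness of $\rho$ in a saturated neighborhood of a closed point $x$ from the single hypothesis that $\stab(x_0)\to\stab(f(x_0))$ is injective; for non-closed $x$ one first passes to $\Spec\sO_{X,x}$ and then spreads out using \cite[Thm.~C]{rydh_noetherian-approx}. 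That cited proposition is where the linear reductivity of $\stab(x_0)$ actually enters, and your Nakayama sketch does not supply a substitute for it.
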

\begin{proof}
  Recall that if $\stX$ is noetherian, then $\stX\to X$ is of finite type.
  In either case, we thus deduce that $\rho$ is locally of finite type.
  Since $\pi_X$ and $\pi_Y$ are quasi-compact and quasi-separated, so is $\rho$.

  For
  (ii), we may assume that $X$ and $Y$ are affine. Then $\stX$ is cohomologically
  affine and $\Delta_\stY$ is affine and the result follows
  from~\cite[Prop.~12.5 (1)]{alper-hall-rydh_etale-local-stacks} when
  $x$ is closed. For non-closed $x$, we can instead conclude that
  $\stX\times_X \Spec \sO_{X,x}\to \stY$ is affine and then that
  $\stX\times_X U \to \stY$ is affine for some open neighborhood $U$ by
  \cite[Thm.~C]{rydh_noetherian-approx}.

  For (iii), first note that $f$ is $0$-strong at $x$ if and only if $\rho|_x$
  is $0$-strong at $x$. We can thus assume that $X=Y=\Spec \kappa(x)$ and
  that $\rho=f$. Then $x_0\in |\stX|$ and $w_0\in |\stY|$ are closed.
  Let $\stX_{w_0}=f^{-1}(\stG_{w_0})$. Since $\stX_{w_0}\inj \stX$ is a closed
  immersion, the good moduli space of $\stX_{w_0}$ coincides with $X=Y=\Spec
  \kappa(x)$. If $\stX_{w_0}\to \stG_{w_0}$ is stabilizer preserving, then
  $\stX_{w_0}$ is a gerbe over $\Spec \kappa(x)$ and $\stX_{w_0}\to \stG_{w_0}$
  is an isomorphism (Lemma~\ref{L:stab-pres-over-gerbe}). The converse is
  obvious.
\end{proof}

The following proposition is a key result in the paper that is used to reduce
the main theorem from arbitrary maps to closed immersions so that we can use
the results of the previous section.

\begin{proposition}\label{P:key-prop}
Let $\map{f}{\stX}{\stY}$ be a morphism between algebraic stacks with good
moduli spaces. Consider the map $\map{\rho}{\stX}{\stY\times_Y X}$.
Assume that $\pi_X$ and $\pi_Y$ have affine diagonals and that
either $\stX$ is noetherian or $f$ is locally of finite type. Let
$U\subseteq |X|$ be the set of points $x\in |X|$ such that
$f$ is $0$-strong at $x$. Then
\begin{enumerate}
\item $U$ is open in $X$.
\item $\rho|_U\colon \stX\times_X U\to \stY\times_Y U$ is a closed immersion.
\end{enumerate}
\end{proposition}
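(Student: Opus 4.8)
\textbf{Proof proposal for Proposition~\ref{P:key-prop}.}

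Both assertions are local on $X$, so we may assume $X$ is affine; in particular $\stX$ and $\stY$ are cohomologically affine and quasi-separated. The plan is to reduce the whole statement to a single application of Nakayama's lemma for good moduli spaces. First I would replace $\stY$ by $\stZ:=\stY\times_Y X$ and $f$ by $\rho$: since $\stZ$ has good moduli space $X$ and $\Delta_{\stZ/X}$ is affine (base change of $\pi_Y$), the new $\rho$ is of finite type by Lemma~\ref{L:affineness}(i), and since $n$-strongness at $x$ for $n\in\{-1,0\}$ depends only on $\rho$ the set $U$ is unchanged. Thus I may assume $Y=X$, $g=\mathrm{id}_X$ and $f=\rho\colon\stX\to\stY$. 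Because a closed immersion is $0$-strong at every point and being a closed immersion is local on $X$, it suffices to prove: \emph{for every $x\in U$ there is an open $V\ni x$ with $\rho|_V$ a closed immersion} (then each such $V\subseteq U$, so $U=\bigcup V$ is open, and $\rho|_U$ is a closed immersion, being one over each member $\stY\times_X V$ of an open cover of $\stY\times_X U$). So fix $x\in U$. As $0$-strong at $x$ implies $(-1)$-strong at $x$, Lemma~\ref{L:affineness}(ii) lets me shrink $X$ so that $\rho$ is affine; write $\stX=\Spec_\stY\mathcal{A}$ with $\mathcal{A}=\rho_*\sO_\stX$, and let $z_0\in|\stY|$ be the special point over $x$. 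By Lemma~\ref{L:affineness}(iii), $0$-strongness at $x$ says exactly that $\rho(x_0)=z_0$ and $\Spec_{\stG_{z_0}}(i_{z_0}^*\mathcal{A})=\rho^{-1}(\stG_{z_0})\to\stG_{z_0}$ is an isomorphism; equivalently, $\rho(x_0)=z_0$ and $i_{z_0}^*(\sO_\stY\to\mathcal{A})$ is an isomorphism.

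The core step is to prove that $\rho$ is \emph{finite} near $x$. From the above, $\rho$ has $0$-dimensional fibre over $z_0$, so by upper semicontinuity of fibre dimension the locus $W\subseteq|\stY|$ where the fibres have dimension $\geq 1$ is closed with $z_0\notin W$. Its image $\pi_Y(W)$ is closed in $X$ (good moduli space maps are closed) and avoids $x$: the fibre of $\pi_Y$ over $x$ is quasi-compact with unique closed point $z_0\notin W$, so $W$ meets it in a closed set with no closed point, i.e.\ in $\emptyset$. After shrinking $X$ to $X\smallsetminus\pi_Y(W)$, $\rho$ is affine and quasi-finite, hence separated and quasi-finite. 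Passing to the henselization $X^h$ of $X$ at $x$ — over which $\stX$ is connected with unique closed point $x_0$ — Zariski's Main Theorem factors $\rho$ as $\stX\inj\overline{\stX}\xrightarrow{\bar\rho}\stY$ with the first map an open immersion and $\bar\rho$ finite. Pulling back to $\stG_{z_0}$, the map $\stG_{z_0}=\rho^{-1}(\stG_{z_0})\inj\overline{\stX}\times_\stY\stG_{z_0}$ is both an open immersion and a section of a finite (hence separated) morphism, so it is clopen; as $z_0$ is a closed point of $\stY$ this exhibits $\stG_{z_0}$ as a closed substack of $\overline{\stX}$. Now $\overline{\stX}$ has good moduli space a finite $X^h$-scheme, hence a finite disjoint union of stacks each with henselian-local good moduli space (so connected, with a unique closed point); let $\overline{\stX}_i$ be the component containing the connected open substack $\stX$. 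If $\overline{\stX}_i\neq\stX$, then its unique closed point $\bar x_0$ lies in $\overline{\stX}_i\smallsetminus\stX$; $\bar\rho(\bar x_0)$ is a closed point of $\stY$ over $x$, hence $=z_0$, so $\bar x_0\notin\stX$ forces $\bar x_0\notin\stG_{z_0}$; but every point of $\overline{\stX}_i$ specializes to $\bar x_0$, so $\bar x_0$ lies in the closure of $x_0$, which is contained in the closed substack $\stG_{z_0}\subseteq\stX$ — a contradiction. Hence $\stX=\overline{\stX}_i$ is a connected component of $\overline{\stX}$ and $\rho$ is finite over $X^h$; by standard spreading-out (Noetherian approximation in the non-noetherian case) $\rho$ is finite over an open neighbourhood of $x$, to which we shrink $X$.

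With $\rho$ finite, $\mathcal{A}=\rho_*\sO_\stX$ is a finite-type $\sO_\stY$-module, so $\mathcal{Q}:=\coker(\sO_\stY\to\mathcal{A})$ is of finite type. Right-exactness of $i_{z_0}^*$ together with the fact that $i_{z_0}^*(\sO_\stY\to\mathcal{A})$ is an isomorphism gives $i_{z_0}^*\mathcal{Q}=0$; Nakayama for good moduli spaces (Lemma~\ref{L:Nakayama-gms}\itemref{LI:Nakayama-gms:zero}) then produces an open $V\ni x$ with $\mathcal{Q}|_{\pi_Y^{-1}(V)}=0$, i.e.\ $\sO_\stY\to\rho_*\sO_\stX$ is surjective over $\pi_Y^{-1}(V)$, i.e.\ $\rho|_V$ is a closed immersion. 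As noted above, this local statement yields both (i) and (ii).

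The main obstacle is the finiteness step: an affine quasi-finite morphism need not be finite, and one must genuinely use the \emph{full} $0$-strongness — both that $\rho(x_0)$ is special and that $\rho^{-1}(\stG_{z_0})\to\stG_{z_0}$ is an isomorphism — to force the Zariski's-Main-Theorem open immersion to be an isomorphism over the henselian local base. The only other nuisance is the non-noetherian case, where $\stZ$ need not be noetherian and the fibre-dimension and spreading-out arguments must be run through Noetherian approximation.
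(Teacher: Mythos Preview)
Your argument is correct and follows the same route as the paper: reduce to $Y=X$ with $\rho$ affine of finite type, pass to quasi-finite, apply Zariski's Main Theorem, show the open-immersion factor is an isomorphism near $x$, and finish with Nakayama on $\coker(\sO_\stY\to\rho_*\sO_\stX)$.

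One small slip: upper semicontinuity of fibre dimension is a statement about the \emph{source}, so the closed locus should be $W'=\{\xi\in|\stX|:\dim_\xi\rho^{-1}(\rho(\xi))\geq1\}\subseteq|\stX|$, not a subset of $|\stY|$ (an affine $\rho$ need not have $\{y:\dim\rho^{-1}(y)\geq1\}$ closed). One then removes $\pi_X(W')$ from $X$, and your unique-closed-point argument goes through verbatim with $x_0$ in place of $z_0$.

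The only substantive difference from the paper is in the finiteness step. Rather than henselizing, decomposing $\overline{\stX}$ into components, and then spreading finiteness back out to an open neighbourhood, the paper stays over $X$: after ZMT produces $\stX\xrightarrow{j}\stY'\xrightarrow{\text{finite}}\stY$, it notes that $\stY'$ has a good moduli space $Y'$ finite over $X$, replaces $\stY'$ by $\stY'\times_{Y'}X$ so that $Y'=X$, checks that $j(x_0)$ is then the special point over $x$ (hence $j|_x$ is an isomorphism), and simply shrinks $X$ to $X\smallsetminus\pi_{Y'}(\stY'\smallsetminus j(\stX))$. This avoids the approximation step you need to descend from $X^h$, which is where most of the non-noetherian delicacy in your version is hiding.
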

\begin{proof}
Let $x\in U$. After replacing $X$ with an open neighborhood of $x$, we can
assume that $\rho$ is affine and of finite type (Lemma~\ref{L:affineness}).
We can also replace $\stY$ with $\stY\times_Y X$ and assume that $X=Y$.
Let $y_0=f(x_0)$ which is special. Then $f^{-1}(\stG_{y_0})\to \stG_{y_0}$
is an isomorphism (Lemma~\ref{L:affineness}). In particular, $f$ is
quasi-finite at $x_0$.
After replacing $X=Y$ with an open neighborhood, we can thus assume that
$f$ is quasi-finite and affine.

By Zariski's main theorem~\cite[Thm.~8.1]{rydh_approximation-sheaves}, $f$
factors as an open immersion $j\colon \stX\to \stY'$ followed by a finite
morphism $g\colon \stY'\to \stY$. Note that $\stY'$ has a good moduli space
$Y'$ which is affine over $Y$. We may thus replace $\stY'$ with
$\stY'\times_{Y'} X$ so that $X=Y'=Y$. Then $j(x_0)$ is the unique special
point above $x$ so $j|_x$ is an isomorphism.
After replacing $X=Y$ with the open neighborhood
$Y'\smallsetminus \pi_{Y'}(\stY'\smallsetminus \stX)$ we can thus assume
that $\stX=\stY'$ so that $f$ is finite.

Let us finally prove that $f|_U$ is a closed immersion for an open
neighborhood $U$ of $x$. Since $\sO_\stY\to f_*\sO_{\stX}$ is
finite, the cokernel $\sF$ is of finite type. Since
$f^{-1}(\stG_{y_0})\to \stG_{y_0}$
is an isomorphism, we have that $y_0\notin \Supp \sF$.
We now take $U$ to be the complement
of the closed subset $\pi_Y(\Supp \sF)$.
\end{proof}

\begin{remark}
If $f$ is special at $x_0$ and \emph{pointwise} stabilizer preserving at every
point in the fiber $f^{-1}(f(x_0))$, then a similar argument shows that $\rho|_U$
is finite.
\end{remark}

\begin{remark}\label{R:0-strong-fiberwise-stab-pres}
A consequence of Proposition~\ref{P:key-prop} is that if $f$ is $0$-strong at
$x$, then $f$ is stabilizer preserving at $x_0$, that is, $f$ is stabilizer
preserving after restricting to an open neighborhood $U$ of $x$.
\end{remark}

\end{section}


\begin{section}{Cotangent complexes}\label{S:cotangent-complexes}

Let $\stX$ be an algebraic stack with a good moduli space and let $x\colon
\Spec k\to \stX$ be a point. Coherent sheaves on the residual gerbe $\stG_x$
are vector bundles, that is, locally free sheaves of finite rank.
Quasi-coherent sheaves on $\stG_x$ are ind-vector bundles, that is, union of vector
bundles. We say that an ind-vector bundle is \emph{trivial} if it is free as a
quasi-coherent sheaf. An ind-vector bundle $\sF$ on $\stG_x$ gives rise to a
$k$-vector space $V=x^*\sF$ with a $\stab(x)$-action. The bundle $\sF$ is
trivial, if and only if the representation $V$ is trivial.

Quotients and sub-bundles of trivial ind-vector bundles on $\stG_x$ are trivial.
If $x$ is special, then $\stab(x)$ is linearly reductive and extensions of
trivial bundles are trivial.

\begin{definition}
Let $\map{f}{\stX}{\stY}$ be a morphism of algebraic stacks
with good moduli spaces. Let $x_0\in |\stX|$ be a (special) point and let
$n\geq -1$ be an integer. We say that $f$ is \emph{$n$-$\LL$-strong} at $x_0$ if
the ind-vector bundle $\coho^d(\LDERF i_{x_0}^*\LL_f)$ is trivial for every
$d\geq -n$. Equivalently, if $x_0\colon \Spec k\to \stX$ is a representative of
the point, then the $\stab(x_0)$-action on the $k$-vector space $H^d(\LDERF
x_0^*\LL_f)$ is trivial for every $d\geq -n$.
We say that $f$ is \emph{$\LL$-strong} at $x_0$ if $f$ is $n$-$\LL$-strong at
$x_0$ for all $n$.
\end{definition}

If $f$ is locally of finite type, then 
$\coho^0(\LDERF i_{x_0}^*\LL_f)=i_{x_0}^*\Omega_f$ is finite-dimensional.
If $f$ is locally of finite presentation, then one can show that
$\coho^{-1}(\LDERF i_{x_0}^*\LL_f)$ is finite-dimensional. We will not need this.

\begin{lemma}\label{L:L-strong}
Let $x\in |X|$ be a point and let $x_0\in |\stX|$ be the corresponding
special point.
\begin{enumerate}
\item\label{LI:L-strong:n-strong}
  Let $n\in \{-1,0,1\}$. If $f$ is $n$-strong at $x$, then $f$
  is $n$-$\LL$-strong at $x_0$.
\item\label{LI:L-strong:strong}
  If $f$ is strong at $x$, then $f$ is $1$-strong at $x$.
\item\label{LI:L-strong:flat}
  Assume that $\pi_X$ and $\pi_Y$ have affine diagonals and 
  that either $\stX$ is noetherian or $f$ is locally of finite type.
  If $f$ is flat and $0$-strong at $x$, then $f$ is $\LL$-strong at $x_0$.
  In particular, $f$ is $1$-strong at $x$.
\end{enumerate}
\end{lemma}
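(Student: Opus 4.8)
The plan is to treat the three parts in the order (i), (ii), (iii), with part~(i) as the base case and the others building on it.

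\emph{Part (i).} The case $n=-1$ is vacuous: $(-1)$-strongness at $x$ makes the induced map $\stG_{x_0}\to \stG_{f(x_0)}$ of residual gerbes representable, so $\LDERF i_{x_0}^*\LL_f$ is concentrated in cohomological degrees $\leq 0$, while $(-1)$-$\LL$-strongness only concerns degrees $\geq 1$. For $n=1$, triviality of $\coho^{-1}$ is part of the definition of $1$-strong, and triviality of $\coho^0$ follows from the case $n=0$ (as $1$-strong $\Rightarrow 0$-strong); so the content is the case $n=0$. Here I would use that $f$ is $0$-strong at $x$ iff $\rho\colon \stX\to \stY\times_Y X$ is (as noted before this section), and that by Lemma~\ref{L:affineness} this means $w_0:=\rho(x_0)$ is special and $\rho^{-1}(\stG_{w_0})\to \stG_{w_0}$ is an isomorphism. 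Since $\Omega$ is compatible with base change, $\Omega_\rho$ restricts to $0$ on $\rho^{-1}(\stG_{w_0})$, so $i_{x_0}^*\Omega_\rho=0$. Pulling back the right-exact transitivity sequence of $\stX\xrightarrow{\rho}\stY\times_Y X\xrightarrow{p}\stY$ along $i_{x_0}$ then gives a surjection $i_{w_0}^*\Omega_p\surj i_{x_0}^*\Omega_f$ (after identifying $\stG_{x_0}\cong \stG_{w_0}$). Finally $\Omega_p=q^*\Omega_{X/Y}$ for the projection $q\colon \stY\times_Y X\to X$, and $q\circ i_{w_0}\colon \stG_{w_0}\to X$ factors through the residual gerbe $\Spec\kappa(x)$ of $x\in|X|$; thus $i_{w_0}^*\Omega_p$ is the pullback of a $\kappa(x)$-vector space along the good moduli space map $\stG_{w_0}\to \Spec\kappa(x)$, hence trivial, and $i_{x_0}^*\Omega_f$, being a quotient of a trivial ind-vector bundle, is trivial.

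\emph{Part (ii).} A strong morphism is special and stabilizer preserving, so $f$ is $0$-strong at $x$ and hence $\coho^0(\LDERF i_{x_0}^*\LL_f)$ is trivial by~(i); moreover $0$-strongness makes $\stG_{x_0}\to \stG_{f(x_0)}$ an isomorphism, and the transitivity triangle of $\stG_{x_0}\inj \stX\xrightarrow{f}\stY$ then forces $\coho^d(\LDERF i_{x_0}^*\LL_f)=0$ for $d\geq 1$. It remains to prove $\coho^{-1}(\LDERF i_{x_0}^*\LL_f)$ is trivial. Since $f$ is strong at $x$, $\rho$ is an isomorphism over a neighborhood of $x$, so $\LL_f\cong \LDERF\rho^*\LL_p$ and $\LDERF i_{x_0}^*\LL_f\cong \LDERF i_{w_0}^*\LL_p$ there; I would then analyse $\LDERF i_{w_0}^*\LL_p$ through the truncated cotangent complexes of the closed immersions $\stG_{w_0}\inj \stY\times_Y X$ and $\Spec\kappa(x)\inj X$. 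The point is that, modulo terms in degrees $\leq -2$, $\LDERF i_{w_0}^*\LL_p$ is pulled back from $\LDERF(\Spec\kappa(x))^*\LL_{X/Y}$ along the good moduli space map of $\stG_{w_0}$ — this uses only the compatibility of the \emph{naive} cotangent complex with the cartesian base change, which involves just $\coho^0$ and $\coho^{-1}$ — so $\coho^{-1}(\LDERF i_{x_0}^*\LL_f)$ is the pullback of a $\kappa(x)$-vector space, hence trivial. (No control of $\coho^{\leq -2}$ is expected, which is why only $1$-strongness is claimed.)

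\emph{Part (iii).} Now the standing hypotheses let Proposition~\ref{P:key-prop} apply, so $\rho$ is a closed immersion over a neighborhood of $x$, and $0$-strongness gives $\stG_{x_0}\xrightarrow{\sim}\stG_{y_0}$ with $y_0:=f(x_0)$; by the observations in~(ii) it suffices to show $\coho^d(\LDERF i_{x_0}^*\LL_f)$ is trivial for $d\leq -1$, and I would in fact show the whole complex is trivial. Since $f$ is flat, the cotangent complex commutes with base change along $i_{y_0}\colon \stG_{y_0}\inj \stY$, so $\LDERF i_{x_0}^*\LL_f=\LDERF i_{x_0}^*\LL_{\stZ/\stG_{y_0}}$ with $\stZ:=f^{-1}(\stG_{y_0})$. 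The morphism $\stZ\to \stG_{y_0}$ is flat, and by $0$-strongness its fibre over $x$ is the isomorphism $\stG_{x_0}\xrightarrow{\sim}\stG_{y_0}$; since $\stG_{y_0}\to \Spec\kappa(x)$ is faithfully flat, I would invoke Theorem~\ref{T:STRONG-MODULES} (on $\stZ$) together with Corollary~\ref{C:strong-immersions} to conclude that $\stZ\to \stG_{y_0}$ is strong, i.e.\ $\stZ=\stG_{y_0}\times_{\Spec\kappa(x)}Z$ where $Z$ is the good moduli space of $\stZ$ (a $\kappa(x)$-space). Then $\LL_{\stZ/\stG_{y_0}}$ is pulled back from $\LL_{Z/\Spec\kappa(x)}$, and restricting to $\stG_{x_0}$ (whose composite to $Z$ factors through $\Spec\kappa(x)$) exhibits $\LDERF i_{x_0}^*\LL_f$ as the pullback along the good moduli space map $\stG_{x_0}\to \Spec\kappa(x)$ of a complex of $\kappa(x)$-vector spaces — trivial in every degree. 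So $f$ is $\LL$-strong at $x_0$, hence $1$-strong at $x$.

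The recurring difficulty — and the main obstacle — is that good moduli space morphisms are not flat in general, so one cannot simply base change cotangent complexes along them: in~(i) this is avoided because only $\Omega$ is needed, in~(ii) by passing to the naive cotangent complex and exploiting the cartesian structure, and in~(iii) by using flatness of $f$ to obtain an honest flat base change onto $\stG_{y_0}$ — with the step that $\stZ\to \stG_{y_0}$ is strong (so that the whole cotangent complex descends to $\Spec\kappa(x)$) being the heart of the argument.
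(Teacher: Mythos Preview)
Your arguments for (i) and (ii) are essentially the paper's: the $n=0$ case of (i) via $i_{x_0}^*\Omega_\rho=0$ and the surjection from the pulled-back $\Omega_g$, and (ii) via the cone of $\LDERF\pi_X^*\LL_g\to\LL_f$ (equivalently $\LDERF q^*\LL_g\to\LL_p$) lying in degrees $\leq -2$ for a cartesian square, so that $\coho^{-1}(\LDERF i_{x_0}^*\LL_f)$ is a quotient of something pulled back along the flat gerbe map $\stG_{x_0}\to\Spec\kappa(x)$. One small slip: $\stG_{x_0}\to\stG_{y_0}$ need not be an isomorphism, only a base change along $\kappa(y)\to\kappa(x)$; this does not affect anything.

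In (iii) your overall plan matches the paper's --- base change along $\stG_{y_0}\inj\stY$ using flatness of $f$, show $\stZ\to\stG_{y_0}$ is strong, and then use flatness of the gerbe $\stG_{y_0}\to\Spec\kappa(y)$ to identify $\LL_{\stZ/\stG_{y_0}}$ with a pullback from $\LL_{Z/\kappa(y)}$ --- but the justification of the middle step is not right. Theorem~\ref{T:STRONG-MODULES} concerns finitely presented modules, not the (typically non--finite-type) algebra $\sO_\stZ$ over $\stG_{y_0}$; and invoking Corollary~\ref{C:strong-immersions} for $\rho_\stZ\colon\stZ\inj\stG_{y_0}\times_{\kappa(y)}Z$ would require you to verify that $i_{x_0}^*\sI_{\rho_\stZ}$ is trivial, which you have not done and which is essentially the $1$-$\LL$-strongness you are trying to prove. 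The paper's route is shorter and avoids this circularity: once Proposition~\ref{P:key-prop} makes $\rho$ a closed immersion near $x$, the composite $f=p\circ\rho$ is stabilizer preserving there (Remark~\ref{R:0-strong-fiberwise-stab-pres}), hence so is its base change $\stZ\to\stG_{y_0}$; then Lemma~\ref{L:stab-pres-over-gerbe} applied to a stabilizer-preserving map into the gerbe $\stG_{y_0}$ immediately gives that $\stZ\to\stG_{y_0}$ is strong, with no appeal to the module-theoretic criteria.
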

\begin{proof}
Let $g\colon X\to Y$ denote the induced morphism on good moduli spaces.

(i) For $n=-1$, if $I_\stX\to f^*I_\stY$ is injective at $x_0$, then $f$
is relatively Deligne--Mumford in an open neighborhood of $x_0$ and hence $\LL_f$
is concentrated in degrees $\leq 0$ in that neighborhood.
For $n=0$, we have that $i_{x_0}^*\Omega_\rho=0$ by Lemma~\ref{L:affineness}
so $i_{x_0}^*(\pi_X)^*\Omega_g \to i_{x_0}^*\Omega_f$ is surjective
which shows that the latter is trivial.
For $n=1$, we have that $\coho^{-1}(\LDERF i_x^*\LL_f)$ is trivial by
definition.

(ii) Strong morphisms are $0$-strong so it is enough to prove that $f$ is
$1$-$\LL$-strong at $x_0$.
We have a natural map $\LDERF (\pi_X)^*\LL_g \to \LL_f$. Let $E$ denote
the cone of this map. If $f$ is strong, that is, if $\stX=\stY\times_Y X$,
then $E$ is the cotangent complex measuring the difference between the
fiber product in derived algebraic geometry and classical algebraic geometry.
The complex $E$ is concentrated in degrees
$\leq -2$ \cite[\spref{09DM}]{stacks-project}.
It follows that
\[
\coho^d(\LDERF i_{x_0}^*\LDERF (\pi_X)^*\LL_g)\to \coho^d(\LDERF i_{x_0}^*\LL_f)
\]
is an isomorphism for $d=0$, that is, we have an equality of cotangent bundles,
and surjective for $d=-1$. The result follows.

(iii) Since $f$ is $0$-strong, we have that $y_0=f(x_0)$ is closed and we may
assume that $f$ is stabilizer preserving
(Remark~\ref{R:0-strong-fiberwise-stab-pres}).
Since $f$ is flat, the cotangent complex commutes with arbitrary pull-back
and we may assume that $\stY=\stG_{y_0}$.
Then $\stY\to Y$ is flat and $f$ is strong (Lemma~\ref{L:stab-pres-over-gerbe}).
By flat base change we then have that $\LL_f = \LDERF (\pi_X)^* \LL_g$
and that $\LDERF i_{x_0}^*\LL_f$ is the pull-back of $\LDERF i_x^*\LL_g$
along $\stG_{x_0}\to \Spec \kappa(x)$ and similarly for the $d$th cohomology
sheaf.
\end{proof}

\begin{lemma}\label{L:L-strong-comp}
Let $f\colon \stX\to \stY$ and $g\colon \stY\to \stZ$ be morphisms between
algebraic stacks with good moduli spaces.
Let $x_0\in |\stX|$ be a special
point and $y_0=f(x_0)$. Let $n\geq -1$ be an integer.
\begin{enumerate}
\item If $f$ is $n$-$\LL$-$strong$ at $x_0$ and $g$ is
  $n$-$\LL$-strong at $y_0$, then $g\circ f$ is $n$-$\LL$-$strong$ at $x_0$.
\item If $g\circ f$ is $n$-$\LL$-$strong$ at $x$ and $g$ is
  $(n-1)$-$\LL$-strong at $y_0$, then $f$ is $n$-$\LL$-$strong$ at $x_0$.
\end{enumerate}
\end{lemma}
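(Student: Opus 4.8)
The plan is to use the distinguished triangle relating the cotangent complexes of $f$, $g\circ f$ and $g$, namely
\[
\LDERF f^*\LL_g \longrightarrow \LL_{g\circ f} \longrightarrow \LL_f \xrightarrow{\;+1\;}
\]
and to pull it back along $i_{x_0}\colon \stG_{x_0}\inj \stX$. Applying $\LDERF i_{x_0}^*$ (which is exact, being pullback along a monomorphism of stacks — or simply pullback of complexes) we obtain a distinguished triangle in $\Dqc(\stG_{x_0})$
\[
\LDERF i_{x_0}^*\LDERF f^*\LL_g \longrightarrow \LDERF i_{x_0}^*\LL_{g\circ f} \longrightarrow \LDERF i_{x_0}^*\LL_f \xrightarrow{\;+1\;}.
\]
The first term deserves a closer look: $\LDERF i_{x_0}^*\LDERF f^*\LL_g = \LDERF(f\circ i_{x_0})^*\LL_g$, and $f\circ i_{x_0}$ factors through the residual gerbe $i_{y_0}\colon \stG_{y_0}\inj \stY$ of $y_0 = f(x_0)$ via some morphism $\bar f\colon \stG_{x_0}\to \stG_{y_0}$. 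Hence $\LDERF i_{x_0}^*\LDERF f^*\LL_g = \LDERF\bar f^*\bigl(\LDERF i_{y_0}^*\LL_g\bigr)$. Since $\bar f$ is a morphism of gerbes over fields, the pullback $\LDERF\bar f^*$ of a trivial ind-vector bundle is again trivial (triviality is detected on the coarse point $\Spec k$ with its stabilizer action, and $\bar f$ induces a homomorphism of stabilizer groups under which a trivial representation pulls back to a trivial one). Moreover $\LDERF\bar f^*$ need not be $t$-exact in general, but $\stG_{y_0}$-pullback along the field extension $\kappa(x_0)/\kappa(y_0)$ part is flat, and the possible non-flatness of $\bar f$ itself only affects cohomology in \emph{lower} degrees, so $\coho^d\bigl(\LDERF i_{x_0}^*\LDERF f^*\LL_g\bigr)$ in degree $d$ is a subquotient built from $\coho^{d'}\bigl(\LDERF i_{y_0}^*\LL_g\bigr)$ for $d'\ge d$; I will make this precise using the hypercohomology spectral sequence for $\LDERF\bar f^*$, which only involves the $\coho^{d'}\bigl(\LDERF i_{y_0}^*\LL_g\bigr)$ with $d'\ge d$.

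For part~(i): assume $f$ is $n$-$\LL$-strong at $x_0$ and $g$ is $n$-$\LL$-strong at $y_0$. Fix $d\ge -n$. From the long exact sequence of cohomology attached to the triangle,
\[
\cdots \to \coho^d\bigl(\LDERF i_{x_0}^*\LDERF f^*\LL_g\bigr) \to \coho^d\bigl(\LDERF i_{x_0}^*\LL_{g\circ f}\bigr) \to \coho^d\bigl(\LDERF i_{x_0}^*\LL_f\bigr) \to \cdots,
\]
the right-hand term is trivial since $f$ is $n$-$\LL$-strong and $d\ge -n$. The left-hand term is a subquotient of the $\coho^{d'}\bigl(\LDERF i_{y_0}^*\LL_g\bigr)$ with $d'\ge d\ge -n$, each of which is trivial since $g$ is $n$-$\LL$-strong at $y_0$; subquotients and extensions of trivial ind-vector bundles on $\stG_{x_0}$ are trivial because $x_0$ is special, hence $\stab(x_0)$ is linearly reductive (as recalled at the start of Section~\ref{S:cotangent-complexes}). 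Therefore $\coho^d\bigl(\LDERF i_{x_0}^*\LL_{g\circ f}\bigr)$, sitting in an extension of the trivial right term's kernel by a quotient of the trivial left term, is itself trivial. As $d\ge -n$ was arbitrary, $g\circ f$ is $n$-$\LL$-strong at $x_0$.

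For part~(ii): assume $g\circ f$ is $n$-$\LL$-strong at $x_0$ and $g$ is $(n-1)$-$\LL$-strong at $y_0$. Fix $d\ge -n$. From the same long exact sequence,
\[
\coho^d\bigl(\LDERF i_{x_0}^*\LL_{g\circ f}\bigr) \to \coho^d\bigl(\LDERF i_{x_0}^*\LL_f\bigr) \to \coho^{d+1}\bigl(\LDERF i_{x_0}^*\LDERF f^*\LL_g\bigr).
\]
The left term is trivial since $g\circ f$ is $n$-$\LL$-strong and $d\ge -n$. The right term is a subquotient of the $\coho^{d'}\bigl(\LDERF i_{y_0}^*\LL_g\bigr)$ with $d'\ge d+1\ge -n+1 = -(n-1)$, all trivial since $g$ is $(n-1)$-$\LL$-strong at $y_0$. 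Hence $\coho^d\bigl(\LDERF i_{x_0}^*\LL_f\bigr)$ is squeezed between a trivial quotient and a trivial sub, so it is trivial by linear reductivity of $\stab(x_0)$ again. Thus $f$ is $n$-$\LL$-strong at $x_0$. \emph{The main obstacle} I anticipate is pinning down exactly which cohomology sheaves of $\LDERF i_{y_0}^*\LL_g$ contribute to $\coho^d\bigl(\LDERF i_{x_0}^*\LDERF f^*\LL_g\bigr)$ — i.e., verifying that only degrees $d'\ge d$ (for (i)) resp.\ $d'\ge d+1$ (for (ii)) are involved. This is what forces the index shift by one in part~(ii) and is handled cleanly by the convergent hypercohomology spectral sequence $\coho^p\bigl(\LDERF\bar f^*\coho^q(\LDERF i_{y_0}^*\LL_g)\bigr)\Rightarrow \coho^{p+q}(\LDERF i_{x_0}^*\LDERF f^*\LL_g)$ with $p\le 0$, together with the remark that $\LDERF\bar f^*$ preserves triviality of ind-vector bundles.
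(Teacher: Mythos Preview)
Your proof is correct and follows the same approach as the paper's: apply $\LDERF i_{x_0}^*$ to the fundamental triangle $\LDERF f^*\LL_g \to \LL_{g\circ f} \to \LL_f$, take the long exact cohomology sequence, and use linear reductivity of $\stab(x_0)$ to conclude that extensions of trivial ind-vector bundles are trivial. The spectral sequence you invoke for $\LDERF \bar f^*$ is correct but unnecessary: every quasi-coherent sheaf on the gerbe $\stG_{y_0}$ is flat (locally it is just a representation on a vector space over the residue field), so $\LDERF \bar f^*$ is $t$-exact and $\coho^d\bigl(\LDERF (f\circ i_{x_0})^*\LL_g\bigr) = \bar f^*\coho^d\bigl(\LDERF i_{y_0}^*\LL_g\bigr)$ on the nose --- the paper simply writes $\coho^d\bigl(\LDERF (f\circ i_{x_0})^*\LL_g\bigr)$ in the long exact sequence and leaves this identification implicit.
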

\begin{proof}
The fundamental triangle for the composition $g\circ f$ gives the long
exact sequence
\vspace{-3mm}
\[
\setlength\arraycolsep{2pt}
\begin{matrix}
 &&& \dots &\longrightarrow & \coho^{-(n+1)}(\LDERF i_{x_0}^*\LL_f) & \longrightarrow \\
 \longrightarrow & \coho^{-n}(\LDERF (f\circ i_{x_0})^*\LL_g) & \longrightarrow
     & \coho^{-n}(\LDERF i_{x_0}^*\LL_{gf})       & \longrightarrow
     & \coho^{-n}(\LDERF i_{x_0}^*\LL_f)          & \longrightarrow \\
 \longrightarrow & \coho^{-(n-1)}(\LDERF (f\circ i_{x_0})^*\LL_g) & \longrightarrow
     & \dots
\end{matrix}
\]
Since the stabilizer of $x_0$ is linearly reductive, extensions of trivial
ind-vector bundles on $\stG_{x_0}$ are trivial. The result follows.
\end{proof}

\begin{proposition}\label{P:1-strong-comp}
Let $f\colon \stX\to \stY$ and $g\colon \stY\to \stZ$ be morphisms between
algebraic stacks with good moduli spaces.
Assume that $\pi_X$, $\pi_Y$ and $\pi_Z$ have affine diagonals and 
that either $\stX$ and $\stY$ are noetherian or that
$f$ and $g$ are locally of finite type.
Let $x\in |X|$ be a
point and $y=f(x)$. Let $n\in \{0,1\}$.
\begin{enumerate}
\item If $f$ is $n$-strong at $x$ and $g$ is $n$-strong at $y$, then
  $g\circ f$ is $n$-strong at~$x$.
\item If $g\circ f$ is $n$-strong at $x$ and $g$ is $(n-1)$-strong at $y$, then
  $f$ is $n$-strong at~$x$.
\end{enumerate}
\end{proposition}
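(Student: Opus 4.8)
The strategy is to decompose the notion of "$n$-strong at $x$" into its $(-1)$-, $0$- and $\LL$-strong constituents and then combine the $\LL$-strong composition calculus of Lemma~\ref{L:L-strong-comp} with the already-established stabilizer/gerbe arguments. Recall that $n$-strong at $x$ means: (for $n=0$) $f(x_0)$ is special and $\pi_X^{-1}(x)\to \stY$ is fiberwise stabilizer preserving at $f(x_0)$; (for $n=1$) additionally $\coho^{-1}(\LDERF i_{x_0}^*\LL_f)$ is trivial, which by Lemma~\ref{L:L-strong}\ref{LI:L-strong:n-strong} is exactly "$0$-strong plus $1$-$\LL$-strong". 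Moreover by Remark~\ref{R:0-strong-fiberwise-stab-pres} and Proposition~\ref{P:key-prop}, being $0$-strong at $x$ implies that $f$ is stabilizer preserving in a neighborhood of $x$ and that $\rho\colon \stX\to \stY\times_Y X$ is a closed immersion there, which gives good control over specialness and fiberwise behavior.

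For part (i) I would first treat the $0$-strong statement: assuming $f$ is $0$-strong at $x$ and $g$ is $0$-strong at $y=g(x)$, I must show $(g\circ f)(x_0)=g(f(x_0))$ is special and that $\pi_X^{-1}(x)\to\stZ$ is fiberwise stabilizer preserving at $g(f(x_0))$. Specialness is immediate: $f$ $0$-strong gives $f(x_0)$ special (so $f(x_0)$ is the special point $y_0$ over $y$), and $g$ $0$-strong then gives $g(y_0)$ special. For the fiberwise stabilizer-preserving condition I would restrict to neighborhoods so that both $f$ and $g$ are stabilizer preserving (Remark~\ref{R:0-strong-fiberwise-stab-pres}), whence $I_{\stX}=f^*I_{\stY}=f^*g^*I_{\stZ}=(g\circ f)^*I_{\stZ}$, so $g\circ f$ is stabilizer preserving and in particular fiberwise stabilizer preserving at any point; combined with specialness this gives $0$-strong. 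For $n=1$ one adds the $\LL$-strong part: $f$ is $1$-$\LL$-strong at $x_0$, $g$ is $1$-$\LL$-strong at $y_0=f(x_0)$, so Lemma~\ref{L:L-strong-comp}(i) yields $g\circ f$ is $1$-$\LL$-strong at $x_0$; together with the $0$-strong conclusion this is exactly $1$-strong at $x$ by Lemma~\ref{L:L-strong}\ref{LI:L-strong:n-strong} and the definition.

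For part (ii), the input is: $g\circ f$ is $n$-strong at $x$ and $g$ is $(n-1)$-strong at $y$. For $n=0$: "$(-1)$-strong" just means $\stab(y_0)\to\stab(g(y_0))$ injective. From $g\circ f$ $0$-strong we get $g(f(x_0))$ special and $\pi_X^{-1}(x)\to\stZ$ fiberwise stabilizer preserving at it; I need $f(x_0)$ special and $\pi_X^{-1}(x)\to\stY$ fiberwise stabilizer preserving at $f(x_0)$. Here the key point is that the composite $\pi_X^{-1}(x)\to\stY\to\stZ$ being fiberwise stabilizer preserving at the image special point, together with $\stab(y_0)\hookrightarrow\stab(g(y_0))$ injective, forces $\pi_X^{-1}(x)\to\stY$ to be fiberwise stabilizer preserving at $f(x_0)$ (inject $I_{\pi_X^{-1}(x)}\hookrightarrow f^*I_{\stY}\hookrightarrow f^*g^*I_{\stZ}$ and note the outer composite is an isomorphism onto the relevant fiber), and one checks $f(x_0)$ is special using that $g\circ f$ sends $x_0$ to a special point and $g$ is $(-1)$-strong (so $g$ does not collapse the closed point of the fiber — more precisely, $f(x_0)$ must be the special point $y_0$ since its image is special and $g$ is quasi-finite enough near $y_0$). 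For $n=1$: adding the cotangent condition, $g\circ f$ $1$-$\LL$-strong at $x_0$ and $g$ $0$-$\LL$-strong at $y_0$ give, via Lemma~\ref{L:L-strong-comp}(ii), that $f$ is $1$-$\LL$-strong at $x_0$; combined with the $0$-strong conclusion just obtained, $f$ is $1$-strong at $x$.

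**Main obstacle.** The delicate point is part (ii) with $n=0$: deducing the fiberwise stabilizer-preserving property for $f$ from that of $g\circ f$ plus only $(-1)$-strongness of $g$. One must be careful that fiberwise stabilizer preserving at $y_0$ is a statement about $f^{-1}(\stG_{y_0})$, and one needs to match up the fiber of $g\circ f$ over $g(y_0)$ with the fiber of $f$ over $y_0$ — which uses that $f(x_0)=y_0$ is itself special, so that $f^{-1}(\stG_{y_0})\subseteq (g\circ f)^{-1}(\stG_{g(y_0)})$ compatibly. Establishing this specialness of $f(x_0)$ is the crux, and I expect to handle it exactly as in the proof of Lemma~\ref{L:affineness}(iii): replace everything by residual gerbes over $\kappa(x)$, observe $g$ quasi-finite near $y_0$ (from $(-1)$-strongness plus the gms structure), and conclude the image of the closed point $x_0$ is the closed point of the fiber.
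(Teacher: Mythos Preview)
Your reduction of the case $n=1$ to $n=0$ via Lemma~\ref{L:L-strong}\ref{LI:L-strong:n-strong} and Lemma~\ref{L:L-strong-comp} is correct and is exactly what the paper does. Your argument for part~(i) with $n=0$ via Remark~\ref{R:0-strong-fiberwise-stab-pres} also works.

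The genuine gap is in part~(ii) with $n=0$, and neither of the two steps you sketch goes through. First, $(-1)$-strongness of $g$ at $y$ does \emph{not} make $g$ quasi-finite near $y_0$: take $g\colon [\AA^1/\Gm]\to B\Gm$, which is $(-1)$-strong at the unique point but has one-dimensional fiber. What Lemma~\ref{L:affineness}(ii) actually extracts from $(-1)$-strongness is that $\rho_g$ is \emph{affine}, which is a different kind of control. Second, your inertia-cancellation argument does not yield surjectivity of $I_{\pi_X^{-1}(x)}\to f^*I_{\stY}$ over $f^{-1}(\stG_{y_0})$: knowing that the composite with the monomorphism $f^*I_{\stY}\hookrightarrow (gf)^*I_{\stZ}$ is an isomorphism only tells you that the first map is a split monomorphism there; surjectivity would require $\stab(y_0)\to\stab(z_0)$ to be \emph{surjective}, which $(-1)$-strongness does not provide.

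The paper's route bypasses both issues. Using the characterization in Lemma~\ref{L:affineness}(iii) (equivalently, after reducing to $X=Y=Z=\Spec k$), Proposition~\ref{P:key-prop} makes $\rho_{gf}$ a closed immersion, and Lemma~\ref{L:affineness}(ii) makes $\rho_g$ (and hence its base change $\rho_g'\colon \stY\times_Y X\to \stZ\times_Z X$) affine, in particular separated. Now factor $\rho_f$ through its graph: the graph $\stX\hookrightarrow \stX\times_{\stZ\times_Z X}(\stY\times_Y X)$ is a closed immersion (base change of $\Delta_{\rho_g'}$), and the second projection to $\stY\times_Y X$ is a closed immersion (base change of $\rho_{gf}$ along $\rho_g'$). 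Hence $\rho_f$ is a closed immersion, and closed immersions are $0$-strong. This single stroke delivers both the specialness of $f(x_0)$ and the fiber isomorphism, with no separate inertia bookkeeping.
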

\begin{proof}
When $n=0$, the result follows directly from Lemma~\ref{L:affineness} and
Proposition~\ref{P:key-prop}. For $n=1$, the result then follows from
Lemmas~\ref{L:L-strong}\ref{LI:L-strong:n-strong} and
\ref{L:L-strong-comp}.
\end{proof}

\end{section}


\begin{section}{Proof of the main theorem}\label{S:pf-of-main-thm}
In this section, we prove a local generalization of Theorem~\ref{T:MAIN-THEOREM}
and prove that all the listed properties of $f$ descend to $g$ except for the
properties concerning local complete intersections and regular
immersions that we defer to the following sections.

\begin{lemma}\label{L:finite-type}
Let $\map{f}{\stX}{\stY}$ be a morphism between algebraic stacks with good moduli
spaces. Assume that $\pi_X$ and $\pi_Y$ have affine diagonals.
Further assume that $f$ is $0$-strong at $x\in |X|$.
If $f$ is locally of finite type at $x_0$, then $\map{g}{X}{Y}$ is
locally of finite type at $x$.
\end{lemma}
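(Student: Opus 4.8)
The plan is to reduce to the situation already treated in Proposition~\ref{P:key-prop}, where $\rho\colon\stX\to\stY\times_Y X$ is a closed immersion of finite type in a neighborhood of $x$, and then separately control the structure map $\stY\times_Y X\to X$. First I would note that the question is local on $X$ (and hence on $Y$), so we may assume $X$ and $Y$ are affine. Since $f$ is $0$-strong at $x$, after shrinking $X$ we may apply Proposition~\ref{P:key-prop} to conclude that $\rho$ is a closed immersion, and by Lemma~\ref{L:affineness}(i) it is of finite type; in particular $\rho$ is locally of finite presentation is not claimed, but "$g$ locally of finite type at $x$" is what we want, so a closed immersion $\rho$ suffices on that factor. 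Thus it remains to show that the base change $\pi_Y\times_Y X\colon \stY\times_Y X\to X$, composed with nothing, transports "locally of finite type at $x_0$" for $\stX$ into "locally of finite type at $x$" for $X$ — more precisely, to show that $g\colon X\to Y$ is locally of finite type at $x$ given that $f=(\pi_Y\times_Y X)\circ\rho$ restricted appropriately is locally of finite type at $x_0$.

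The key step is the following: with $X,Y$ affine and $\rho$ a closed immersion, the morphism $g\colon X\to Y$ of affine schemes is locally of finite type at $x$ if and only if the algebra $\mathcal{O}_Y\to \mathcal{O}_X=(\pi_X)_*\mathcal{O}_{\stX}$ is of finite type near $x$. Now $f$ locally of finite type at $x_0$ means $\stX$ is locally of finite type over $\stY$ near $x_0$; pushing forward along good moduli spaces and using that $\pi$ is of finite type when it has affine diagonal (recalled in Section~\ref{S:gms}, from \cite[Thm.~A.1]{alper-hall-rydh_luna-stacks}), we get that $\stX\to Y$ is locally of finite type near $x_0$, hence $\stX\to X$ being surjective and $X\to Y$ factoring through it, one deduces $X\to Y$ is locally of finite type at $x$. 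The cleanest way to phrase the last implication: $\stX\to X$ is an fpqc cover (surjective and, being a good moduli space morphism with affine diagonal, of a form to which descent of "locally of finite type" applies), and "locally of finite type" descends along faithfully flat quasi-compact morphisms of the target — wait, here $\stX\to X$ is the map itself, so instead I would argue directly that $\mathcal{O}_X=(\pi_X)_*\mathcal{O}_{\stX}$ is a subalgebra of a finitely generated $\mathcal{O}_Y$-algebra and is itself finitely generated because $X$ is noetherian (when $\stX$ is noetherian) or because in the finite-presentation case $(\pi_X)_*$ preserves finite presentation by Theorem~\ref{T:strong-modules:gen}.

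The main obstacle I anticipate is the non-noetherian case: when we only assume $f$ is locally of finite presentation (rather than $\stX,\stY$ noetherian), we cannot invoke "$X$ noetherian" to conclude that the subalgebra $\mathcal{O}_X\subseteq(\pi_Y)_*(\pi_Y)_*$-image is finitely generated. Here I would instead argue as follows: reduce via Proposition~\ref{P:key-prop} to $\rho$ a closed immersion of finite presentation, so that $\stX=\stY\times_Y X$ up to the closed immersion and $f$ finitely presented forces $g$ to be "finitely presented at $x$" by the same mechanism as in Corollary~\ref{C:strong-immersions} (where $\sJ=(\pi_Y)_*\sI$ is shown to be of finite type using that $(\pi_Y)_*$ is exact and $\sI$ is of finite type). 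Concretely, in the finite-presentation setting $\rho$ is a finitely presented closed immersion with ideal $\sI$ of finite presentation on $\stY\times_Y X$; its pushforward ideal $\sJ=(\pi_{\stY\times_Y X})_*\sI$ is of finite type by Theorem~\ref{T:strong-modules:gen}\ref{TI:strong:surj}, which cuts out a finitely presented closed subscheme — but that controls a further closed immersion, not $g$ itself, so the genuine content is showing $\stY\times_Y X\to X$ poses no obstruction, i.e.\ reducing the whole question to the closed-immersion case already in hand, after which finite type of $g$ is read off from finite type of the composite $\stX\to\stY\times_Y X\to \stY\to Y$ together with $(\pi_X)_*\mathcal{O}_\stX=\mathcal{O}_X$. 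I would expect the write-up to be short once this reduction to Proposition~\ref{P:key-prop} and Corollary~\ref{C:strong-immersions} is made explicit.
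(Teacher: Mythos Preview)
Your reduction to the situation where $\rho\colon \stX\to \stY\times_Y X$ is a closed immersion (via Proposition~\ref{P:key-prop}) is exactly the right first move, and it is also the first step in the paper. But after that your argument does not close. The result you cite from Section~\ref{S:gms} (that $\pi_X$ is of finite type when it has affine diagonal) requires $\stX$ to be locally noetherian, which the lemma does not assume; and even granting it, knowing that $\stX\to Y$ is locally of finite type does not let you conclude that $X\to Y$ is, because there is no descent of ``locally of finite type'' along $\pi_X$ in this direction. Your attempt via Corollary~\ref{C:strong-immersions} is also a dead end: the good moduli space of $\stY\times_Y X$ is $X$ itself, so the induced map on moduli spaces of $\rho$ is the identity $X\to X$, which tells you nothing about $g$. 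The closing sentence (``read off finite type of $g$ from the composite together with $(\pi_X)_*\sO_\stX=\sO_X$'') is not an argument; in effect it asks that global sections of a finite-type $\sO_\stY$-algebra be finite type over $\sO_Y$, which is precisely the content one has to prove.

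The missing idea is an approximation argument. With $X=\Spec B$, $Y=\Spec A$ affine and $\rho$ a closed immersion, write $B=\varinjlim_\lambda B_\lambda$ as the filtered union of its finitely generated $A$-subalgebras and set $X_\lambda=\Spec B_\lambda$. Then $\stY\times_Y X=\varprojlim_\lambda \stY\times_Y X_\lambda$ with affine transition maps. Since $f$ is affine (as $\rho$ is a closed immersion and $X\to Y$ is affine) and of finite type, $f_*\sO_\stX$ is a finitely generated $\sO_\stY$-algebra; the surjection $\sO_\stY\otimes_A B\twoheadrightarrow f_*\sO_\stX$ therefore already factors surjectively through $\sO_\stY\otimes_A B_\lambda$ for some $\lambda$, i.e.\ $\rho_\lambda\colon \stX\to \stY\times_Y X_\lambda$ is a closed immersion. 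Passing to good moduli spaces (closed immersions descend), $X\to X_\lambda$ is a closed immersion, and since $X_\lambda\to Y$ is of finite type by construction, so is $g$. This is the paper's proof; the limit step is the substantive input you were missing.
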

\begin{proof}
After replacing $X$ with an open neighborhood of $x$, we may assume that
$f$ is locally of finite type. After replacing $X$ with a further open
neighborhood, we can assume that $\rho\colon \stX\to \stY\times_Y X$ is
a closed immersion by Proposition~\ref{P:key-prop}.
We may also assume that $X=\Spec B$ and $Y=\Spec A$ are affine.

Write $B$ as the
union of its finitely generated subalgebras $B_\lambda$ and let
$X_\lambda=\Spec B_\lambda$.
Since $\rho\colon \stX\to \varprojlim_\lambda \stY\times_Y X_\lambda$ is a closed immersion and $f$ is of finite type, it follows that
$\rho_\lambda\colon \stX\to \stY\times_Y X_\lambda$ is a closed immersion for
sufficiently large
$\lambda$.
Hence the induced map on good moduli spaces $X\to X_\lambda$ is a closed immersion
as well and we conclude that $g$ is of finite type.
\end{proof}

\begin{theorem}\label{T:main-thm:local}
Let $\map{f}{\stX}{\stY}$ be a morphism between algebraic stacks with good moduli
spaces. Assume that $\pi_X$ and $\pi_Y$ have affine diagonals.
Let $x\in |X|$ be a point. Assume that either $\stX$ and $\stY$ are
locally noetherian
or that $f$ is locally of finite presentation at $x_0$.
Then the following are equivalent
\begin{enumerate}
\item $f$ is $1$-strong at $x$,
\item $f$ is strong at $x$.
\end{enumerate}
\end{theorem}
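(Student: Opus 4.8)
The plan is to prove the two implications separately, with the substance lying entirely in (i)$\implies$(ii). The direction (ii)$\implies$(i) is Lemma~\ref{L:L-strong}\ref{LI:L-strong:strong}, so nothing new is needed there. For (i)$\implies$(ii), the first move is to replace $X$ by an open neighborhood of $x$ so that we may work $0$-strongly everywhere: since $f$ is $1$-strong at $x$ it is in particular $0$-strong at $x$, so by Proposition~\ref{P:key-prop} we may shrink $X$ so that $\rho\colon \stX\to \stY\times_Y X$ is a closed immersion; by Lemma~\ref{L:affineness} we may also assume $\rho$ is of finite type and, by Remark~\ref{R:0-strong-fiberwise-stab-pres}, that $f$ is stabilizer preserving. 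The upshot is that we have reduced to the case where $f$ factors as a closed immersion $\rho$ into $\stW:=\stY\times_Y X$, which again has a good moduli space, namely (by base change) $X$ itself via $\pi_W = \mathrm{pr}_2$. Here $\pi_W$ has affine diagonal because $\pi_Y$ does.

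Now the point is to apply Corollary~\ref{C:strong-immersions} to the closed immersion $\rho\colon \stX\inj \stW$. That corollary says $\rho$ is strong in a neighborhood of $x$ if and only if $i_{x_0}^*\sI_\rho$ is a trivial vector bundle, where $\sI_\rho$ is the ideal sheaf of $\rho$, and by the remark following that corollary $i_{x_0}^*\sI_\rho = \coho^{-1}(\LDERF i_{x_0}^*\LL_\rho)$. So I need to show that the hypothesis — $f$ is $1$-strong at $x$, i.e.\ $f$ is $0$-strong and $\coho^{-1}(\LDERF i_{x_0}^*\LL_f)$ is trivial — forces $\coho^{-1}(\LDERF i_{x_0}^*\LL_\rho)$ to be trivial. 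For this I use the fundamental transitivity triangle for $\stX\xrightarrow{\rho}\stW\xrightarrow{\pi_W} X$ together with the observation that $\LL_{\pi_W}$ is pulled back from $\LL_g$ along $\pi_W$ (flat base change along $g$, since $\pi_W$ is the base change of $\pi_Y$). Concretely: the composite $\stX\to\stW\to X$ equals $\stX\xrightarrow{\pi_X} X$, and $\LL_{\pi_X}$ restricted to $\stG_{x_0}$ has its $\coho^{-1}$ equal to the pullback of $\coho^{-1}(\LDERF i_x^*\LL_g)$, which is automatically trivial (it is pulled back from the point $x$), plus possibly contributions in lower degree. Chasing the long exact sequence of $\coho^\bullet(\LDERF i_{x_0}^*(-))$ of the triangle $\LDERF\rho^*\LL_{\pi_W}\to \LL_{\pi_X}\to \LL_\rho$, and using that $\rho$ is a closed immersion so $\LL_\rho$ is concentrated in degree $-1$ (its $\coho^{-1}$ is $\sI_\rho/\sI_\rho^2$), we get $\coho^{-1}(\LDERF i_{x_0}^*\LL_\rho)$ sitting in an exact sequence between $\coho^{-1}(\LDERF i_{x_0}^*\LL_{\pi_X})$ (trivial part, coming from $\LL_g$) and $\coho^0$ of the pulled-back $\LL_{\pi_W}$-term; but $\coho^0(\LDERF i_{x_0}^*\LL_{\pi_W})$ is the pullback of $\coho^0(\LDERF i_x^*\LL_g) = i_x^*\Omega_g$, again trivial. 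Hence $\coho^{-1}(\LDERF i_{x_0}^*\LL_\rho)$ is an extension of trivial ind-vector bundles on $\stG_{x_0}$; since $x_0$ is special, $\stab(x_0)$ is linearly reductive and such extensions split, so it is trivial. (Strictly, I should compare $\LL_f$ and $\LL_\rho$ directly: since $f = \pi_W\circ\rho$ with $\pi_W$ the good-moduli-space map, the difference is governed by $\LL_{\pi_W}$, whose restriction to $\stG_{x_0}$ is pulled back from the residue field; so $\coho^{-1}(\LDERF i_{x_0}^*\LL_f)$ and $\coho^{-1}(\LDERF i_{x_0}^*\LL_\rho)$ differ by trivial terms, and triviality of one gives triviality of the other.)

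Having established that $i_{x_0}^*\sI_\rho$ is trivial, Corollary~\ref{C:strong-immersions} gives that $\rho$ is strong in a neighborhood $U$ of $x$, i.e.\ after shrinking, $\stX = \stW\times_{X}(\text{image of }X\text{ in }X)$... more precisely, $\rho$ is the base change of a closed immersion $X'\inj X$ along $\pi_W$. But the induced map on good moduli spaces of $\rho$ is, by the base-change property, just the structural closed subspace; combined with $\stW = \stY\times_Y X$ this yields $\stX = \stY\times_Y X'$, and the good moduli space of $\stX$ is $X'$, which then must coincide with the subspace of $X$ — so in fact $X' = X$ near $x$ and $\rho$ is an isomorphism over a neighborhood, giving $\stX = \stY\times_Y X$, i.e.\ $f$ is strong at $x$.

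The main obstacle I anticipate is the bookkeeping in the cotangent-complex step: one must be careful that $\LL_{\pi_W}$, or rather its pullback to $\stG_{x_0}$, really does contribute only trivial ind-vector bundles in degrees $-1$ and $0$, which rests on the flat-base-change identity $\LL_{\pi_W}\simeq \LDERF\mathrm{pr}_2^*$ of something relevant to $g$ — but $\pi_W$ is not flat, so one cannot literally say $\LL_{\pi_W}=\LDERF\mathrm{pr}_2^*\LL_{\mathrm{id}_X}$. The correct statement is that $\pi_W$ is the pullback of $\pi_Y$ along $g$, and since the square is cartesian with $g$ a morphism of algebraic spaces (hence $\LL_g$ exists and the base-change map $\LDERF\mathrm{pr}_1^*\LL_{\pi_Y}\to\LL_{\pi_W}$ is an isomorphism for cartesian squares), we get $\LL_{\pi_W}\simeq \LDERF f^*\,(\text{shift-free})\dots$ — this needs the version of base change for cotangent complexes, which holds for arbitrary cartesian squares of stacks \spcite{08QQ}. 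Once that is in hand, restricting to the residual gerbe and using linear reductivity of $\stab(x_0)$ is routine, exactly as in the proof of Lemma~\ref{L:L-strong}\ref{LI:L-strong:strong}.
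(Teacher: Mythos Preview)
Your overall strategy matches the paper's: reduce to the closed immersion $\rho\colon \stX\inj \stW:=\stY\times_Y X$ via Proposition~\ref{P:key-prop} and then invoke Corollary~\ref{C:strong-immersions}. But the cotangent-complex step contains a genuine error, and there is a missing finiteness check.

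You conflate the two projections out of $\stW$. You set $\pi_W=\mathrm{pr}_2\colon\stW\to X$ but then write ``$f=\pi_W\circ\rho$''; this is impossible since $f$ lands in $\stY$. The correct factorization is $f=\mathrm{pr}_1\circ\rho$. Correspondingly, it is $\LL_{\mathrm{pr}_1}$, not $\LL_{\pi_W}$, that is related to $\LL_g$ by base change; $\LL_{\pi_W}$ is instead related to $\LL_{\pi_Y}$, and $i_{y_0}^*\Omega_{\pi_Y}$ is \emph{not} trivial in general (for $\pi_Y\colon[\AA^1/\Gm]\to\Spec k$ it is the weight-$1$ character of $\Gm$). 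So your triangle for $\pi_X=\pi_W\circ\rho$ does not yield the desired triviality---indeed, if it did, you would have proved that $0$-strong implies $1$-strong without ever using the hypothesis on $\coho^{-1}(\LDERF i_{x_0}^*\LL_f)$. The triangle you need is the one for $f=\mathrm{pr}_1\circ\rho$: then $\coho^{-1}(\LDERF i_{x_0}^*\LL_\rho)$ sits between $\coho^{-1}(\LDERF i_{x_0}^*\LL_f)$, trivial by hypothesis, and $i_{x_0}^*\rho^*\Omega_{\mathrm{pr}_1}$, trivial because $\mathrm{pr}_1$ is strong and hence $0$-$\LL$-strong (Lemma~\ref{L:L-strong}). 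The paper packages exactly this as Proposition~\ref{P:1-strong-comp}(ii) applied to $\rho$ and $\mathrm{pr}_1$.

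Secondly, Corollary~\ref{C:strong-immersions} requires $\rho$ to be of finite \emph{presentation} at $x_0$, not merely of finite type, and you do not verify this. In the locally noetherian case it follows once you observe that $\stW$ is noetherian (since $\pi_Y$ is of finite type). In the non-noetherian case one must first invoke Lemma~\ref{L:finite-type} to conclude that $g$ is locally of finite type at $x$; then $\mathrm{pr}_1$ is locally of finite type and hence $\rho$ is of finite presentation because $f=\mathrm{pr}_1\circ\rho$ is.
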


\begin{proof}
That (ii)$\implies$(i) is Lemma~\ref{L:L-strong}. For the converse,
we can assume that $X$ and $Y$ are affine. After replacing $X$ with an
open neighborhood of $x$, we can assume that
$\rho\colon \stX\to \stY\times_Y X$ is a closed immersion
(Proposition~\ref{P:key-prop}).

If $\stY$ is locally noetherian, then $\pi_Y$ is of finite type so
$\stY\times_Y X$ is noetherian and $\rho$ is of finite presentation.
If $f$ is locally of finite presentation at $x_0$,
then $g$ is locally of finite type at $x$
by Lemma~\ref{L:finite-type} which implies that $\rho$ is of finite presentation
at $x_0$.

Moreover, $\rho$ is $1$-strong at $x$ by Proposition~\ref{P:1-strong-comp}.  We
can thus conclude that $\rho$, and hence $f$, is strong at $x$ by
Corollary~\ref{C:strong-immersions}.
\end{proof}

That most properties in Theorem~\ref{T:MAIN-THEOREM} descend follows
from purity of $\pi_Y$.

\begin{proposition}\label{P:main-thm:properties}
Let $\map{f}{\stX}{\stY}$ be a morphism between algebraic stacks with good
moduli spaces. Suppose that $f$ is strong.
If $f$ has one of the properties:
\begin{enumerate}
\item locally of finite type, locally of finite presentation, flat,
\item syntomic, regular, smooth, \'etale, representable,
  monomorphism and locally of finite type, unramified, locally quasi-finite,
\item affine, closed immersion, immersion, open immersion
\item finite, proper, separated;
\end{enumerate}
then so does $g\colon X\to Y$.
\end{proposition}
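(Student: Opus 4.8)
The plan is to reduce to the case where $\stX = \stY \times_Y X$, i.e., where $f$ is strong, and then descend each property along the good moduli space map $\pi_Y$ using its purity. Concretely, since $f$ is strong, we have a cartesian diagram with $g\colon X\to Y$ the induced map and $\pi_X$ the base change of $\pi_Y$ along $g$. For each property $P$ in the list, the strategy is: (a) observe that $P$ holds for $f$; (b) transfer $P$ to the morphism $\stY\times_Y X \to \stY$, which is exactly $\pi_Y^*g$ (the base change of $g$ along $\pi_Y$); (c) descend $P$ from $\pi_Y^*g$ to $g$ using that $\pi_Y$ is faithfully flat in a suitable sense, or rather using purity. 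The subtlety is that $\pi_Y$ is \emph{not} flat, so ordinary fppf descent of morphism-properties does not apply directly; this is where purity of $\pi_Y$ enters.

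First I would handle the properties that descend by a direct argument. For \emph{affine}: $f$ affine implies $g$ affine and $g_*\sO_X = (\pi_Y)_* f_*\sO_{\stX}$, as already recorded in the base-change discussion in Section~\ref{S:gms}. For \emph{locally of finite type} and \emph{locally of finite presentation}: since $f=\rho$ is strong, Lemma~\ref{L:finite-type} (and the finite-presentation refinement in the proof of Theorem~\ref{T:main-thm:local}) gives that $g$ inherits these at every point of $X$. For \emph{closed immersion}: this is Corollary~\ref{C:strong-immersions}, which also gives $g$ of finite presentation when $f$ is. An \emph{immersion} (resp.\ \emph{open immersion}) is a closed immersion into an open substack (resp.\ an open substack); since the image of $\stX$ in $\stY$ is $\pi_Y$-saturated and open/closed substacks correspond bijectively to open/closed subspaces of the good moduli space under the closedness and pushforward properties of $\pi_Y$, these cases follow from the closed-immersion case together with the topological properties of $\pi_Y$. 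This also yields \emph{monomorphism and locally of finite type}, since a strong monomorphism $\stX \hookrightarrow \stY$ that is locally of finite type is a locally closed immersion (being a monomorphism of finite type that is strong, hence $\rho$ is a monomorphism of finite type onto a substack, which is an immersion).

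Next, the properties \emph{flat, locally of finite presentation, smooth, \'etale, syntomic, regular, unramified, locally quasi-finite}: each of these is a property $P$ of morphisms such that for the base change $\pi_Y^*g \colon \stY\times_Y X \to \stY$ one has $\pi_Y^*g$ has $P$ if and only if $g$ has $P$. One direction (if $g$ has $P$ then $\pi_Y^*g$ has $P$) is stability of $P$ under base change. For the converse I would argue locally: the question is local on $Y$ for the fppf (indeed Zariski) topology on $Y$, so we may assume $Y=\Spec A$ is affine; then $\pi_Y\colon\stY\to Y$ is cohomologically affine with $(\pi_Y)_*\sO_{\stY}=\sO_Y$ and is \emph{pure}, so $\sO_Y \to (\pi_Y)_*\sO_{\stY}$ is injective and remains so after any base change $Y'\to Y$. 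For \emph{flat}: $g$ is flat iff $g_*$ of the structure sheaf, or rather $\sO_X$ as an $\sO_Y$-module, is flat; using $g^*(\pi_Y)_* = (\pi_X)_* f^*$ (base change for good moduli spaces) and exactness of $(\pi_X)_*$, flatness of $f$ forces flatness of $g$. For \emph{smooth/\'etale/syntomic/regular/unramified/locally quasi-finite}: having reduced to $g$ of finite presentation (already shown), these are fiberwise conditions over points of $Y$ plus flatness (except unramified and lqf, which are fiberwise); so after base changing to a point $\Spec\kappa(y)\to Y$ and noting that $\stX_y \to \stY_y$ is again strong over the good moduli space $\Spec\kappa(y)$ of $\stY_y$ (when $y$ is in the image; otherwise the fiber is empty and there is nothing to check), the claim reduces to the special point $y_0\in|\stY_y|$ and the corresponding fiber of $f$, where the property is inherited from $f$ by faithfully flat descent along $\stG_{y_0}\to\Spec\kappa(y)$ (an fppf morphism). \emph{Locally quasi-finite} descends because $\rho^{-1}(\stG_{w_0})\to\stG_{w_0}$ is an isomorphism (Lemma~\ref{L:affineness}(iii)), forcing the fibers of $g$ to be finite. \emph{Representable} is automatic for $g$ as $X,Y$ are algebraic spaces.

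Finally, the global/topological properties \emph{finite, proper, separated}: each is a composite of an affine or local condition with universal closedness or separatedness. \emph{Separated}: $g$ separated $\iff \Delta_g$ is a closed immersion; but $\Delta_g$ is obtained from $\Delta_f$ (a closed immersion) by the strong/cartesian property together with the fact that forming diagonals commutes with base change and that closed immersions descend (shown above), using $\Delta_{\pi_X}$ affine hence separated. \emph{Proper} $=$ separated $+$ finite type $+$ universally closed: finite type and separated are done; universal closedness of $g$ follows from universal closedness of $f$ because $\pi_X,\pi_Y$ are universally closed and surjective (topological properties in Section~\ref{S:gms}), so $g\circ\pi_X = \pi_Y\circ f$ is universally closed and $\pi_X$ is surjective, forcing $g$ universally closed. \emph{Finite} $=$ affine $+$ proper, so it follows from the affine and proper cases. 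The main obstacle I anticipate is the fiberwise descent step for smooth/regular/syntomic: one must be careful that passing to fibers over $y\in|Y|$ preserves the "strong over the good moduli space" setup, i.e., that $\stX_y\to\stY_y$ is strong with respect to $\Spec\kappa(y)$ and that its special point corresponds correctly to $x_0$, before invoking fppf descent along the residual gerbe; once that bookkeeping is in place, each individual property is routine.
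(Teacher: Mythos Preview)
Your handling of the cases in (iii) and (iv) is essentially the same as the paper's, and correct. The genuine gap is in your treatment of \emph{flatness}. The sentence ``using $g^*(\pi_Y)_* = (\pi_X)_* f^*$ and exactness of $(\pi_X)_*$, flatness of $f$ forces flatness of $g$'' is not an argument: base change for push-forward and exactness of $(\pi_X)_*$ do not by themselves give any control over $\sO_X$ as an $\sO_Y$-module. The point you are missing is that $\pi_Y$ is \emph{pure} (Section~\ref{S:gms}), and descent of flatness along a pure quasi-compact morphism is a genuine theorem \spcite{08XE} (ultimately going back to Raynaud--Gruson). The paper invokes this result directly for flat, locally of finite type, and locally of finite presentation simultaneously \cite[\spref{08XD}, \spref{08XE}]{stacks-project}; once those three are in hand, everything in (ii) follows at once because each such property is flat plus locally of finite presentation plus a condition on geometric fibers, and the fibers of $g$ are covered fppf by fibers of $f$.

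Two smaller issues. First, your claim that a strong monomorphism locally of finite type must be a locally closed immersion is false in general (monomorphisms of finite type between schemes need not be immersions); the correct route is that ``monomorphism locally of finite type'' is equivalent to ``unramified and universally injective'', both of which are fiberwise conditions once finite type is known. Second, invoking Lemma~\ref{L:finite-type} for finite type is circular here: that lemma carries the hypothesis that $\pi_X$ and $\pi_Y$ have affine diagonals, which is not assumed in the Proposition. Again, purity descent \spcite{08XD} handles finite type and finite presentation without any diagonal hypothesis.
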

\begin{proof}
Recall that $\pi_Y\colon \stY\to Y$ is \emph{pure} (Section~\ref{S:gms}).
If $f$ is locally of finite type, locally of finite presentation, or flat, then
so is $g$ by descent along
$\pi_Y$~\cite[\spref{08XD}, \spref{08XE}]{stacks-project}.

The properties in (ii) are combinations of flat, locally of finite type,
locally of finite presentation and conditions on the geometric fibers so
they descend.

If $f$ is affine, then so is $g$ since $X=\Spec_Y (\pi_Y)_* f_*\sO_\stX$.  If
$f$ is a closed immersion, then so is $g$ since $(\pi_Y)_*$ is exact.  If $f$
is an immersion, let $x\in |X|$ and let $U$ be an open neighborhood of $x_0\in
|\stY|$ such that $f|_U$ is a closed immersion. Then $g|_{Y\smallsetminus
  \pi_Y(\stY\smallsetminus U)}$ is a closed immersion.
An open immersion is the same thing as an \'etale monomorphism.

If $f$ is universally closed, then so is $g$ since $\pi_X$ and $\pi_Y$ are
surjective and universally closed.
If $f$ is separated, then $\Delta_f$ is a closed immersion
and it follows that $\Delta_g$ is a closed immersion, that is, $g$ is separated.
We conclude that if $f$ is proper, then so is $g$.
Finite is equivalent to proper and affine and thus also descend (or use
\cite[\spref{08XD}]{stacks-project}).
\end{proof}

\begin{proof}[Proof of Theorem~\ref{T:MAIN-THEOREM}]
Conditions (i)--(iii) says that $f$ is $1$-strong at every point $x\in |X|$.
Thus, if they are satisfied, then $f$ is strong by
Theorem~\ref{T:main-thm:local}. Conversely, if $f$ is strong, then $f$
is special and stabilizer preserving and also $1$-$\LL$-strong at every
special point by Lemma~\ref{L:L-strong}\ref{LI:L-strong:strong}.

If $f$ has one of the properties of (a)--(c) then so does $g$ by
Propositions~\ref{P:main-thm:properties} (most properties),
\ref{P:main-thm:regular-immersion} (quasi-regular and Koszul-regular)
and \ref{P:main-thm:lci} (lci).

If $\pi_Y$ is a coarse moduli space, then every point of $\stY$ is special so
(i) is redundant.  If $f$ has reduced fibers, then fiberwise stabilizer
preserving at $y_0\in |\stY|$ is equivalent to pointwise stabilizer preserving
at every point of $f^{-1}(y_0)$ by Lemma~\ref{L:pointwise-stabilizer-pres}.
Conditions (i)--(ii) says that $f$ is $0$-strong at every point $x\in |X|$.  If
in addition $f$ is flat, then $f$ is $1$-strong at every point $x\in |X|$ by
Lemma~\ref{L:L-strong}\ref{LI:L-strong:flat}.
\end{proof}

\begin{proof}[Proof of Corollary~\ref{C:MAPS-TO-STACKS}]
It is immediate that (i)$\implies$(ii)$\implies$(iii)$\implies$(iv).  If
$g_1,g_2\colon X\to \stY$ are two maps through which $f$ factors, then the we
obtain a map $\tau\colon \stX\to \stY\times_{\stY\times \stY} X$. Since $\pi$
is initial among algebraic spaces, $\tau$ factors uniquely as $\stX\to X\to
\stY\times_{\stY\times \stY} X$, that is, we have a unique $2$-isomorphism
between $g_1$ and $g_2$.

To see that (iv)$\implies$(i), we may thus work fppf-locally on $X$.  Let $U\to
\stY$ be a smooth presentation and let $\stX'=\stX\times_\stY U$. Then
$\stX'\to \stX$ is smooth, surjective and fiberwise stabilizer preserving by
(iv).
By Theorem~\ref{T:MAIN-THEOREM}, it thus descends to a smooth surjective
morphism $X'\to X$. Since $\stX'\to X'$ is a coarse moduli space, we obtain a
factorization $\stX'\to X'\to U$ which gives a map $X'\to U\to \stY$ as
requested.
\end{proof}
\end{section}


\begin{section}{Regular immersions}\label{S:reg-emb}
In this section, we prove the part about regular immersions in
Theorem~\ref{T:MAIN-THEOREM}. We begin with recalling various notions of
regular sequences and regular immersions in the non-noetherian situation. The
reader who is only interested in the noetherian situation can skip ahead to
\S\ref{S:descent-regular-immersions}.

\subsection{Regular sequences}
Let $A$ be a ring, let $f_1,f_2,\dots,f_n\in A$ be a sequence of elements and let
$I=(f_1,f_2,\dots,f_n)$. There are three slightly different notions of
$f_1,f_2,\dots,f_n$ being a regular sequence~\cite[0.15, 16.9, 19.5]{egaIV}, \cite[Exp.~VII]{sga6},
\cite[\spref{062D}, \spref{07CU}, \spref{067M}]{stacks-project}.
\begin{enumerate}
\item $f_i$ is a non-zero divisor in $A/(f_1,\dots,f_{i-1})$ for all
  $i=1,2,\dots,n$.
\item The Koszul complex $K_\bullet(A,f_1,f_2,\dots,f_n)$ is acyclic in degrees
  $>0$.
\item $I/I^2$ is locally free of rank $n$ and the canonical map
  $\Sym_{A/I}(I/I^2)\to \bigoplus_{d\geq 0} I^d/I^{d+1}$ is an isomorphism.
\end{enumerate}
We say that the sequence $(f_i)$ is \emph{regular} in (i), \emph{Koszul-regular} in (ii) and
\emph{quasi-regular} in (iii). If $(f_1,f_2,\dots,f_n)=(g_1,g_2,\dots,g_n)$,
then $\{f_i\}$ is Koszul-regular if and only if $\{g_i\}$ is
Koszul-regular~\spcite{066A}.  This is also trivially true for quasi-regularity
but false for regular sequences in general~\cite[Rmq.~16.9.6 (ii)]{egaIV}.

\subsection{Schemes}
Let $X$ be a scheme and let $f\colon Z\inj X$ be a closed immersion with
corresponding sheaf of ideals $\sI$. We say that $f$ is a regular (resp.\ a
Koszul-regular, resp.\ a quasi-regular) immersion at $x\in |X|$ if there exists
an affine open neighborhood $U=\Spec A\subseteq X$ of $x$ such that $\sI$ is
generated by a regular (resp.\ Koszul-regular, resp.\ quasi-regular) sequence.
We have that regular $\implies$ Koszul-regular $\implies$ quasi-regular.  When
$X$ is locally noetherian, all three notions coincide,
see \cite[Prop.~16.9.11, Cor.~19.5.2]{egaIV} or \cite[\spref{063E}, \spref{063I}]{stacks-project}.
Koszul-regularity and quasi-regularity are easily seen to be
fpqc-local~\spcite{068N} whereas regularity is not. Indeed, any Koszul-regular
immersion is smooth-locally regular~\spcite{068Q}.

\subsection{Algebraic stacks}
We say that a closed immersion $f\colon \stZ\inj \stX$ of algebraic stacks is
Koszul-regular (resp.\ quasi-regular) at a point $x\in |\stX|$ if there exists
a scheme $U$, a smooth morphism $U\to \stX$ and a point $u\in |U|$ above $x$,
such that $\stZ\times_\stX U\to U$ is Koszul-regular (resp.\ quasi-regular)
at $u$.
This then holds for any $U$ and any point $u$ above $x$.

Let $f\colon \stZ\inj \stX$ be a closed immersion, let $x\in |\stX|$ be a point
and let $i_x\colon \stG_x\inj \stX$ denote the inclusion of the residual gerbe
(or a gerbe representative as in Remark~\ref{R:gerbe-representatives}).
Consider the following conditions.
\begin{enumerate}
\item $f$ is Koszul-regular at $x$,
\item $\sI/\sI^2$ is locally free at $x$ and $\LL_f\simeq (\sI/\sI^2)[1]$ in a
  neighborhood of $x$.
\item $\coho^{-n}(\LDERF i_x^*\LL_f)=0$ for $n\geq 2$, and
\item $\coho^{-n}(\LDERF i_x^*\LL_f)=0$ for $n=2$.
\end{enumerate}
Then (i)$\implies$(ii)~\spcite{08SK} and trivially
(ii)$\implies$(iii)$\implies$(iv). If $\stX$ is locally noetherian, then all
conditions are equivalent,
see~\cite[Thm.~6.25]{andre_homologie-algebres-commutatives}.

\subsection{Descent of regular immersions}\label{S:descent-regular-immersions}
Recall that a closed immersion $f\colon \stX\to \stY$ with ideal sheaf $\sI$
is a quasi-regular immersion, if
\begin{enumerate}
\item $\sI$ is finitely generated (i.e., $f$ is finitely presented),
\item the conormal sheaf $\sN_f:=\sI/\sI^2=f^*\sI$ is a vector bundle on
  $\stX$, and
\item the natural map $\Sym_{\sO_X}(\sN_f)\to \bigoplus_{d\geq 0}
  \sI^d/\sI^{d+1}$ is an isomorphism.
\end{enumerate}

\begin{proposition}\label{P:main-thm:regular-immersion}
Let $f\colon \stX\to \stY$ be a closed immersion between stacks with
good moduli spaces and let $g\colon X\to Y$ denote the induced closed immersion
of good moduli spaces. Suppose that $f$ is strong.
\begin{enumerate}
\item If $f$ is a quasi-regular immersion, then so is $g$.
\item If $f$ is Koszul-regular, then so is $g$.
\end{enumerate}
In either case, the adjunction maps $(\pi_X)^*\sN_g\to \sN_f$
and $\sN_g\to (\pi_X)_*\sN_f$ are isomorphisms.
\end{proposition}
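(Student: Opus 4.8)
The plan is to reduce everything to the key structural fact, already available from Corollary~\ref{C:strong-immersions}, that for a strong closed immersion $f\colon\stX\inj\stY$ the ideal sheaf $\sJ=(\pi_Y)_*\sI$ defines $g$ and the counit $\epsilon\colon(\pi_Y)^*\sJ\to\sI$ is an isomorphism. First I would observe that since $\pi_Y$ is exact on quasi-coherent sheaves and $(\pi_Y)_*(\pi_Y)^*=\mathrm{id}$ on $\QCoh(Y)$, push-forward along $\pi_Y$ carries the $\sI$-adic filtration of $\sO_\stY$ to the $\sJ$-adic filtration of $\sO_Y$ and, conversely, pull-back along $\pi_Y$ carries the $\sJ$-adic filtration back: concretely $(\pi_Y)_*(\sI^d)=\sJ^d$ for all $d\geq0$, because $\epsilon$ identifies $(\pi_Y)^*\sJ^d=((\pi_Y)^*\sJ)^d$ with $\sI^d$ (the surjection $(\pi_Y)^*\sJ^{\otimes d}\to\sI^d$ factors through an isomorphism since $\epsilon$ is an isomorphism), and then $(\pi_Y)_*\sI^d=(\pi_Y)_*(\pi_Y)^*\sJ^d=\sJ^d$. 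Applying $(\pi_X)_*$ (equivalently restricting $(\pi_Y)_*$ to the closed substack) to the exact sequence $0\to\sI^{d+1}\to\sI^d\to\sI^d/\sI^{d+1}\to0$ and using exactness then gives $(\pi_X)_*(\sI^d/\sI^{d+1})=\sJ^d/\sJ^{d+1}$, and base change along $\pi_X$ of this identity, together with the isomorphism $\epsilon$, gives $(\pi_X)^*(\sJ^d/\sJ^{d+1})=\sI^d/\sI^{d+1}$. The case $d=1$ is exactly the statement that $(\pi_X)^*\sN_g\to\sN_f$ and $\sN_g\to(\pi_X)_*\sN_f$ are isomorphisms.

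For part (i), quasi-regularity of $g$ has three requirements. Finite presentation of $\sJ$ is already in Corollary~\ref{C:strong-immersions}. That $\sN_g=\sJ/\sJ^2$ is a vector bundle follows from $(\pi_X)^*\sN_g\cong\sN_f$ being a vector bundle on $\stX$ together with the unit $\sN_g\xrightarrow{\sim}(\pi_X)_*(\pi_X)^*\sN_g\cong(\pi_X)_*\sN_f$: since $\pi_X$ is a good moduli space, full faithfulness of $\pi_X^*$ plus local freeness of $\pi_X^*\sN_g$ forces $\sN_g$ to be locally free of the same rank (this is Theorem~\ref{T:STRONG-MODULES}/\ref{T:strong-modules:gen}\ref{TI:strong:vb} applied to $\sN_g$, or more directly: pick a neighborhood where $\pi_X^*\sN_g\cong\sO_\stX^{\oplus n}$, descend the trivializing frame via full faithfulness). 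Finally the graded isomorphism $\Sym_{\sO_X}(\sN_g)\to\bigoplus_{d}\sJ^d/\sJ^{d+1}$: I would check it is an isomorphism by pulling back along the faithfully flat $\pi_X$ — here "faithfully flat" must be replaced by "$\pi_X^*$ is faithful and conservative on quasi-coherent sheaves with $(\pi_X)_*$ exact and $\epsilon$ an isomorphism", so a map of quasi-coherent $\sO_X$-modules is an isomorphism iff its $\pi_X^*$-pullback is — and after pull-back along $\pi_X$ the map becomes $\Sym_{\sO_\stX}(\sN_f)\to\bigoplus_d\sI^d/\sI^{d+1}$ by the filtration identities of the previous paragraph together with compatibility of $\Sym$ with pull-back, which is an isomorphism because $f$ is quasi-regular.

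For part (ii), Koszul-regularity is smooth-local on $\stY$ and not directly expressible via the cotangent complex unless one is in the noetherian case, so I would argue differently. Since $f$ is Koszul-regular it is in particular quasi-regular, so by part (i) $g$ is quasi-regular; it remains to upgrade this to Koszul-regular. In the locally noetherian case quasi-regular, Koszul-regular and regular coincide (as recalled in \S\ref{S:reg-emb}), so there $g$ is automatically Koszul-regular. In general I expect the cleanest route is: Koszul-regularity of $f$ is equivalent to $\LL_f\simeq\sN_f[1]$ (condition (ii) in the algebraic-stacks list, which holds for Koszul-regular $f$), and for the strong morphism $f$ we have $\LL_f\cong\LDERF\pi_X^*\LL_g$ by Lemma~\ref{L:L-strong}\ref{LI:L-strong:strong} (more precisely the cone $E$ of $\LDERF\pi_X^*\LL_g\to\LL_f$ is concentrated in degrees $\leq-2$, but a closed immersion that is quasi-regular has $\LL$ in degree $-1$, forcing $E=0$ so $\LL_f\cong\LDERF\pi_X^*\LL_g$); hence $\LDERF\pi_X^*\LL_g\simeq\pi_X^*(\sN_g)[1]$ is concentrated in degree $-1$ and locally free there, and by descent of this property along the good moduli space map (conservativity of $\pi_X^*$ on cohomology sheaves of complexes in $\Dqc$, which holds because $(\pi_X)_*$ is exact, cf. Theorem~\ref{T:strong-modules:complexes}) we get $\LL_g\simeq\sN_g[1]$; finally, for a finitely presented closed immersion $g$ with $\sN_g$ locally free, the condition $\LL_g\simeq\sN_g[1]$ together with quasi-regularity is equivalent to Koszul-regularity — indeed a quasi-regular immersion whose conormal sheaf is a vector bundle of locally constant rank is Koszul-regular (\spref{07CU}-type statement; the failure can only occur without the bundle hypothesis). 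The main obstacle I anticipate is precisely this last step in the non-noetherian generality: making rigorous that quasi-regular plus locally free conormal (plus finite presentation) implies Koszul-regular, which may need a local computation with the Koszul complex — I would check whether it can instead be deferred to \S\ref{S:lci} where such non-noetherian subtleties are handled, or cite \spref{08SK}/\spref{063D} directly.
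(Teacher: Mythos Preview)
Your plan for part (i) is close to the paper's, but with two technical slips. First, Corollary~\ref{C:strong-immersions} only gives that the counit $\epsilon\colon(\pi_Y)^*\sJ\to\sI$ is \emph{surjective}, not an isomorphism; since $\pi_Y$ is generally not flat, the kernel of $(\pi_Y)^*\sJ\to\sO_\stY$ need not vanish. The identity $(\pi_Y)_*\sI^d=\sJ^d$ is still correct, but the argument is: surjectivity of $(\pi_Y)^*\sJ^d\to\sI^d$ plus exactness of $(\pi_Y)_*$ gives a surjection $\sJ^d\to(\pi_Y)_*\sI^d$ of subsheaves of $\sO_Y$, hence an equality. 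Second, and for the same reason, the pull-back identity $(\pi_X)^*(\sJ^d/\sJ^{d+1})\cong\sI^d/\sI^{d+1}$ does not follow directly (only a surjection does), so your conservativity argument for the $\Sym$ map needs patching. The paper sidesteps this by working with push-forward throughout: once $(\pi_X)_*\sN_f=\sN_g$ is known from the filtration, Theorem~\ref{T:strong-modules:gen}\ref{TI:strong:vb} (applied to the vector bundle $\sN_f$) gives both that $\sN_g$ is locally free and that $(\pi_X)^*\sN_g\to\sN_f$ is an isomorphism; then $(\pi_X)_*\Sym_{\sO_\stX}(\sN_f)=\Sym_{\sO_X}(\sN_g)$, and pushing forward the quasi-regular isomorphism for $f$ directly yields the one for $g$.

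For part (ii) the paper takes a much more elementary route that completely avoids the obstacle you flag. Work locally on $Y=\Spec A$ and choose $f_1,\dots,f_n\in\sJ$ mapping to a basis of $\sN_g$ (possible since $\sN_g$ is locally free by part (i)); after shrinking, $\sJ=(f_1,\dots,f_n)$, and the pullbacks $\pi_Y^*f_i$ generate $\sI$ and give a basis of $\sN_f$. The Koszul complex $K_\bullet=K_\bullet(A,f_1,\dots,f_n)$ consists of free $A$-modules, so $(\pi_Y)^*K_\bullet$ is the Koszul complex on the $\pi_Y^*f_i$, which is acyclic in positive degrees because $f$ is Koszul-regular. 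Since $(\pi_Y)_*$ is exact and $(\pi_Y)_*(\pi_Y)^*K_\bullet=K_\bullet$, the original Koszul complex is acyclic too. This works uniformly in the non-noetherian case and avoids both descent of the cotangent complex and the implication ``$\LL_g\simeq\sN_g[1]\Rightarrow$ Koszul-regular'', which, as you suspected, is not available in general: the paper only records (i)$\Rightarrow$(ii) among its conditions for closed immersions, with equivalence asserted only in the locally noetherian case.
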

\begin{proof}
Let $\sI$ denote the ideal sheaf of $f$ and $\sJ$ the ideal sheaf of $g$.
Since $f$ is strong, we have that $(\pi_Y)^*\sJ^d\to \sI^d$ is surjective
for all $d\geq 0$. Thus, since $f$ is cohomologically affine,
$\sJ^d=(\pi_Y)_*(\sI^d)$ for all $d\geq 0$ and hence
$\sJ^d/\sJ^{d+1}=(\pi_X)_*(\sI^d/\sI^{d+1})$. In particular,
\[
\sN_g=(\pi_X)_*\sN_f,\quad\text{and}\quad
\bigoplus_{d\geq 0} \sJ^d/\sJ^{d+1}
 = (\pi_X)_*\bigoplus_{d\geq 0} \sI^d/\sI^{d+1}.
\]
If $\sN_f$
is a vector bundle, then so is $(\pi_X)_*\sN_f$ by
Theorem~\ref{T:strong-modules:gen}\itemref{TI:strong:vb} applied to
$\sF=\sN_f=f^*\sI$ along $\pi_X$, and
$(\pi_X)^*(\pi_X)_*\sN_f\to \sN_f$ is an isomorphism. It follows that
$\Sym_{\sO_X}(\sN_g)=(\pi_X)_*\Sym_{\sO_\stX}(\sN_f)$. Thus, if $f$ is
quasi-regular, then so is $g$.

Now, suppose that $f$ is Koszul-regular.
To see that $g$ is Koszul-regular we may work locally
on $Y$ and assume that $Y=\Spec A$ is affine. Choose a sequence of elements
$f_1,f_2,\dots,f_n\in \sJ$ that gives a basis of $\sN_g$. After replacing
$Y$ with an open neighborhood, we may assume that $\sJ=(f_1,f_2,\dots,f_n)$.
Then $(\pi_Y)^*f_1,\dots,(\pi_Y)^*f_n\in \sI$ generates $\sI$ and gives a basis
of $\sN_f=(\pi_X)^*\sN_g$. Consider the Koszul complex
$K_\bullet=K_\bullet(A,f_1,f_2,\dots,f_n)$.
Since $f$ is Koszul-regular, the pull-back $(\pi_Y)^*K_\bullet$ is acyclic in
degrees $>0$. Since $(\pi_Y)_*$ is exact, the push-forward
$(\pi_Y)_*(\pi_Y)^*K_\bullet=K_\bullet$ is also
acyclic in degrees $>0$ and we conclude that $g$ is Koszul-regular.
\end{proof}

\begin{remark}[cf.\ {\cite[Thm.~2.2 (ii)]{edidin_strong-regular-embeddings}}]
Suppose that $\stX$ is noetherian (or merely quasi-compact and quasi-separated).
If $f$ is a strong quasi-regular immersion, then there exists a finite
stratification $X=\bigcup X_k$ by locally closed subspaces such that
$(\sN_f)|_{\stX\times_X X_k}$ is a
trivial vector bundle for every $k$. Indeed, there exists a finite
stratification of $X$ in schemes. After
refining the stratification we may assume that
$\sN_g|_{X_k}$ is a trivial
vector bundle for every $k$ and the result follows.
\end{remark}

\end{section}


\begin{section}{Local complete intersection morphisms}\label{S:lci}
In this section, we prove the part about lci morphisms in
Theorem~\ref{T:MAIN-THEOREM}.
We begin with recalling the definition of lci morphisms $f\colon \stX\to
\stY$ when either (a) $\stX$ and $\stY$ are noetherian schemes or stacks but
$f$ is not locally of finite type, or (b) $\stX$ and $\stY$ are not necessarily
noetherian but $f$ is locally of finite presentation. The reader who only is
interested in morphisms of finite type between noetherian stacks can skip
ahead to \S\ref{S:descent-lci}.
Main references for this section are~\cite{avramov_lci},
\spcite{068E} and \cite[19.3]{egaIV}.

\subsection{Algebraic schemes}
Let $X$ be a scheme of finite type over a field $k$. We say that $X$ is a
\emph{local complete intersection at $x$}, abbreviated lci, if locally around
$x$, there is a closed immersion $i\colon X\inj Y$ where $Y$ is smooth over $k$
and $i$ is regular at $x$. If $X$ is lci at $x$, then $i$
is regular at $x$ for any such factorization. Equivalently, $H^2(\LL_{X/k}\otimes
\kappa(x))=0$ and then $H^{-n}(\LL_{X/k}\otimes \kappa(x))=0$ for all $n\geq 2$.

\subsection{Morphisms locally of finite presentation}\label{SS:lci:fp}
If $f\colon \stX\to \stY$ is a morphism, locally of finite
presentation, between algebraic stacks, then we say that $f$ is lci at $x\in
|\stX|$ if there exists a commutative diagram
\[
\xymatrix{
U\ar[d]\ar@{(->}[r]^i & V\ar[d]\\
\stX\ar[r]^f & \stY
}
\]
where $U$ and $V$ are schemes, the vertical maps are smooth and $i$ is a closed immersion such 
that $i$ is Koszul-regular at a point $u$ above $x$. Then $i$ is Koszul-regular
for any such diagram~\cite[\spref{0692}, \spref{069P}]{stacks-project}.

A closed immersion is Koszul-regular if and only if it is lci.  Let $f\colon
\stX\to \stY$ be a morphism, locally of finite presentation, let $x\in |\stX|$
and consider the following conditions:
\begin{enumerate}
\item $f$ is lci at $x$,
\item $\LL_f|_U$ is perfect of Tor-amplitude $[-1,1]$ in an open neighborhood $U$
  of $x$.
\item $\coho^{-n}(\LDERF i_x^*\LL_f)=0$ for $n\geq 2$, and
\item $\coho^{-n}(\LDERF i_x^*\LL_f)=0$ for $n=2$.
\end{enumerate}
Then (i)$\implies$(ii)$\implies$(iii)$\implies$(iv) and all conditions are
equivalent if $\stX$ is locally noetherian. This is an easy consequence of
the corresponding result for Koszul-regular immersions using that the cotangent
complex of a smooth morphism is perfect of Tor-amplitude $[0,1]$.

\subsection{Noetherian schemes}
Similar to regularity, there is an \emph{absolute} notion of lci:
if $X$ is a noetherian scheme, then we say that $X$ is lci at $x$ if
the completion $\widehat{\sO}_{X,x}$ is a quotient of a regular local ring
$R$ by a regular sequence. If we write $\widehat{\sO}_{X,x}$
as a quotient $R/I$ of some regular ring $R$, which we always can do by Cohen's
structure theorem, then $I$ is generated by a regular sequence if and only if
$X$ is lci at $x$.

\subsection{Morphisms between noetherian schemes and stacks}\label{SS:lci:cotangent-complex}
Let
$f\colon X\to Y$ be a morphism of locally noetherian schemes. A \emph{Cohen
  factorization} of $f$ at $x\in |X|$ is a commutative diagram
\[
\xymatrix{
\Spec \widehat{\sO}_{X,x}\ar[d]\ar@{(->}[r]_-{i} & W\ar[d]^w\\
X\ar[r]_f & Y\\
}
\]
where $w$ is flat, the fiber $w^{-1}(f(x))$ is a regular scheme, and $i$ is a closed immersion. Cohen factorizations always
exist~\cite[Thm.~1.1]{avramov_Cohen}.

A morphism $f\colon X\to Y$ of locally noetherian schemes is lci at $x\in |X|$ if
$i$ is regular for some Cohen factorization. This does not depend on the Cohen
factorization and the following are equivalent~\cite[Thm.~1.2, (1.8)]{avramov_lci}
\begin{enumerate}
\item $f$ is lci at $x$,
\item $H^{-n}(\LL_f\Lotimes \kappa(x))=0$ for $n\geq 2$, and
\item $H^{-n}(\LL_f\Lotimes \kappa(x))=0$ for $n=2$.
\end{enumerate}

The latter characterizations show that the notion of lci is local in the smooth
topology on both $X$ and $Y$ and hence makes sense for morphisms of
locally noetherian algebraic stacks. To make sense of the conditions above
for stacks,
replace $H^{-n}(\LL_f\Lotimes \kappa(x))$ with $\coho^{-n}(\LDERF i_x^* \LL_f)$.

\subsection{Syntomic morphisms}
Let $f\colon \stX\to \stY$ be a morphism between algebraic stacks.  Assume that
either $\stX$ and $\stY$ are noetherian or that $f$ is locally of finite
presentation. Then we say that $f$ is \emph{syntomic} at $x\in |X|$ if it is
flat and lci at $x$. Equivalently, $f$ is flat at $x$ and the fiber
$f^{-1}(f(x))$ is lci at $x$. Indeed, in the noetherian case this follows from
the characterization using the cotangent complex since the cotangent complex
$\LL_f$ commutes with arbitrary base change since $f$ is flat. When $f$ is
locally of finite presentation, this is~\cite[Prop.~19.3.7]{egaIV} and then
the locus where $f$ is syntomic is open in $X$.

\subsection{Descent of lci morphisms}\label{S:descent-lci}

\newcommand{\stXhat}{\skew{5}\widehat{\stX}}
\newcommand{\Xhat}{\widehat{X}}

\begin{proposition}\label{P:main-thm:lci}
Let $f\colon \stX\to \stY$ be a morphism between stacks with
good moduli spaces and let $g\colon X\to Y$ denote the induced morphism
of good moduli spaces. Assume that either $f$ is locally of finite presentation
or that $\stX$ and $\stY$ are locally noetherian. Suppose that $f$ is strong.
If $f$ is lci, then so is $g$.
\end{proposition}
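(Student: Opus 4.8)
The plan is to reduce to the case of a closed immersion, exactly as in the previous descent results, and then argue using Koszul complexes and the cohomological affineness of $\pi_X$. Since $f$ is strong and lci, the question is local on $Y$, so I would first reduce to the situation where $Y = \Spec A$ is affine; then $\stX$ is cohomologically affine. I would also like to have $X$ affine, hence $\stX$ cohomologically affine and all push-forwards along $\pi_X$ exact. The issue is that $g$ being lci is a statement I want to check at each point $x \in |X|$, so I would fix such a point and let $x_0 \in |\stX|$ denote the unique special point above it; it suffices to prove that $g$ is lci at $x$.

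The main step is to factor $f$ suitably. In the finite-presentation case, working smooth-locally on $\stX$ and $\stY$ (which is harmless since strongness is preserved under such base change and lci is smooth-local), I would choose a factorization of $f$ through a closed immersion $i$ into something smooth over $\stY$, so that $i$ is Koszul-regular; I want this smooth ambient stack to again have a good moduli space and $i$ to again be strong, reducing to the closed-immersion case. In the locally noetherian case one uses instead a Cohen factorization $\Spec \widehat{\sO}_{X,x}\hookrightarrow W \to Y$ as recalled in \S\ref{SS:lci:cotangent-complex}, and pulls it back along $\pi_Y$; here the subtle point is that completion does not interact straightforwardly with good moduli spaces, so I expect to instead argue directly via the cotangent complex: by Lemma~\ref{L:L-strong}\ref{LI:L-strong:strong}, $f$ strong implies $f$ is $1$-$\LL$-strong, hence $\coho^{-n}(\LDERF i_{x_0}^*\LL_f) = 0$ for all $n \geq 2$ follows from $f$ being lci, and I want to transport this vanishing to $\coho^{-n}(\LDERF i_x^*\LL_g)$. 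Since $f$ is strong, $\LL_g$ and $\LL_f$ are related by $\LDERF\pi_X^*\LL_g \to \LL_f$ with cone concentrated in degrees $\leq -2$ (as in the proof of Lemma~\ref{L:L-strong}(ii)); restricting along $i_{x_0}$ and using that $i_{x_0}$ factors through $\pi_X$, the long exact sequence shows $\coho^{-n}(\LDERF i_x^*\LL_g) = \coho^{-n}(\LDERF i_{x_0}^*\LL_f)$ for $n = 0, 1$ and gives a surjection onto it for larger $n$ — but that surjection goes the wrong way.

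So instead the cleanest route is the Koszul-complex argument mirroring Proposition~\ref{P:main-thm:regular-immersion}(ii). After the factorization I reduce to showing: if $i\colon \stX \hookrightarrow \stW$ is a strong Koszul-regular immersion between stacks with good moduli spaces, then the induced $X \hookrightarrow W$ is Koszul-regular. This is precisely the content of Proposition~\ref{P:main-thm:regular-immersion}(ii) once I know the induced map of good moduli spaces is a closed immersion, which follows from Corollary~\ref{C:strong-immersions} (a strong closed immersion descends to a closed immersion). Combined with the fact that the smooth ambient map descends to a smooth map by Proposition~\ref{P:main-thm:properties}, the composite exhibits $X$ locally as a Koszul-regular closed subscheme of something smooth over $Y$, i.e., $g$ is lci. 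The hard part will be setting up the factorization equivariantly so that the ambient stack still carries a good moduli space with affine diagonal and the closed immersion remains strong; in the finite-presentation case this should follow by choosing the smooth ambient stack to be, locally, an affine bundle $\AA^n_{\stY}$ over $\stY$ (whose good moduli space is the corresponding affine bundle over $Y$, by base change for good moduli spaces), and in the noetherian case one reduces to finite presentation by a standard limit argument or handles it via the cotangent-complex criterion of \S\ref{SS:lci:cotangent-complex} applied after noting that $\pi_X$ has cohomological dimension zero.
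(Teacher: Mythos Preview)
Your finite-presentation case is essentially the paper's argument: factor $g$ locally as $X \hookrightarrow \AA^n_Y \to Y$, pull back along $\pi_Y$ to get $\stX \hookrightarrow \AA^n_\stY \to \stY$, observe that $\AA^n_\stY$ has good moduli space $\AA^n_Y$ by base change so the closed immersion is strong, and that it is Koszul-regular since $f$ is lci; then invoke Proposition~\ref{P:main-thm:regular-immersion}. (One caveat: factor $g$ and pull back, rather than factoring $f$ after smooth-localizing $\stY$; strongness is a statement relative to the good moduli spaces and is not obviously preserved under arbitrary smooth localization on the target.)

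The gap is in the noetherian case. You correctly name the tool --- pull a Cohen factorization $\widehat{X} \hookrightarrow W \to Y$ of $g$ back along $\pi_Y$ --- and then discard it, but the worry is misplaced: the completion happens entirely on the algebraic-space side. Setting $\stX' := \stX \times_X \widehat{X}$ and $\stW := \stY \times_Y W$, base change for good moduli spaces gives that these have good moduli spaces $\widehat{X}$ and $W$, and $\stX' \hookrightarrow \stW$ is strong by construction. The fibre $\stW \times_\stY \stG_{y_0}$ is regular because $W_y$ is regular and $\stG_{y_0} \to \Spec \kappa(y)$ is smooth (an fppf gerbe). Since $f$ is lci at $x_0$ and $\stX' \to \stX$ is flat, the composite $\stX' \to \stY$ is lci at the closed point, and the Cohen-factorization characterization of lci (on smooth charts, via Avramov) shows that $\stX' \hookrightarrow \stW$ is regular. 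Proposition~\ref{P:main-thm:regular-immersion} then gives that $\widehat{X} \hookrightarrow W$ is regular, i.e.\ $g$ is lci at $x$. This is exactly what the paper does.

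Your proposed alternatives do not close the gap. The cotangent-complex comparison fails for the reason you already identified: the cone of $\LDERF \pi_X^* \LL_g \to \LL_f$ sits in degrees $\leq -2$, so one only obtains a surjection from $\coho^{-2}(\LDERF i_{x_0}^* \LDERF \pi_X^* \LL_g)$ onto $\coho^{-2}(\LDERF i_{x_0}^* \LL_f)$, which goes the wrong way. And no ``standard limit argument'' reducing to finite presentation is available, since under the noetherian hypothesis $f$ and $g$ need not be of finite type --- this is precisely why the two hypotheses are treated separately and why \S\ref{S:lci} invokes Avramov's theory of lci for arbitrary maps of noetherian rings.
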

\begin{proof}
First assume that $f$ is locally of finite presentation. Then so is $g$
(Proposition~\ref{P:main-thm:properties}). The question is local on $X$
and $Y$ so we may assume that $X$ and $Y$ are affine and can factor
$g$ through a closed immersion $X\inj \AA^n_Y$. Then $g$ is lci if and only
if $X\inj \AA^n_Y$ is Koszul-regular and $f$ is lci if and only if
$\stX\inj \AA^n_\stY$ is Koszul-regular (see \S\ref{SS:lci:fp}).
The result now follows by Proposition~\ref{P:main-thm:regular-immersion}.

Instead assume that $\stX$ and $\stY$ are locally noetherian.  Let $x\in |X|$
be a point, let $y=g(x)$, let $\Xhat$ denote the completion at $x$ and let
$\Xhat\inj W\to Y$ be a Cohen factorization~\cite[Thm.~1.1]{avramov_Cohen}.
That is, $W\to Y$ is flat, the fiber $W_y$ is regular and $\Xhat\inj W$ is
a closed immersion.
Let $\stXhat\inj \stW\to \stY$ be the pull-back of the Cohen factorization
along $\pi_Y$. Note that $\stW_{y_0}=\stW\times_\stY \stG_{y_0}$ is regular.
Indeed, the morphism $\stG_{y_0}\to \Spec \kappa(y)$ is smooth since it is
an fppf gerbe.
If $f$ is lci, we conclude that $\stXhat\inj \stW$ is a
regular immersion and that $\Xhat\inj W$ is regular
(Proposition~\ref{P:main-thm:regular-immersion}) so that $g$ is lci.
\end{proof}

\end{section}


\bibliography{luna-fundamental}
\bibliographystyle{dary}

\end{document}